\let\SIAMoriglabel\label
\let\SIAMorigrefstepcounter\refstepcounter
\AddToHook{package/hyperref/before}{%
  \let\label\SIAMoriglabel
  \let\refstepcounter\SIAMorigrefstepcounter}
\documentclass[final,onefignum,onetabnum]{siamart220329}

\usepackage{amsfonts}
\usepackage{amssymb}
\usepackage{amsopn}
\usepackage{booktabs}
\usepackage{graphicx}
\usepackage{mathtools}
\usepackage{mathrsfs}
\usepackage{microtype}
\usepackage{placeins}

\newcommand{\T}{\mathbb T}
\newcommand{\Z}{\mathbb Z}
\newcommand{\E}{\mathbb E}
\newcommand{\cF}{\mathcal F}
\newcommand{\cH}{\mathcal H}
\newcommand{\cV}{\mathcal V}
\newcommand{\Risk}{R}
\newcommand{\Iell}{\mathsf I_{\ell}}
\newcommand{\Irho}{\mathsf I_{\rho}}
\newcommand{\Gell}{\mathsf G_{\ell}}
\newcommand{\Grho}{\mathsf G_{\rho}}
\newcommand{\Sell}{\mathsf S_{\ell}}
\newcommand{\Srho}{\mathsf S_{\rho}}
\newcommand{\Phior}{\boldsymbol\Phi}
\newcommand{\norm}[1]{\lVert #1\rVert}
\newcommand{\bignorm}[1]{\bigl\lVert #1\bigr\rVert}
\newcommand{\ip}[2]{\left\langle #1,#2\right\rangle}
\newcommand{\dd}{\,\mathrm d}

\newcommand{\ft}{\mathrm{ft}}
\DeclareMathOperator{\dist}{dist}

\DeclareMathOperator{\tr}{tr}

\newsiamremark{remark}{Remark}
\newsiamremark{example}{Example}

\title{Non-asymptotic Analysis of Mat\'ern Regression:\\
The Roles of Target and Kernel Lengthscales}
\author{Daniel Sanz-Alonso\thanks{\raggedright Department of Statistics,
University of Chicago, Chicago, IL 60637 USA.\\
\email{sanzalonso@uchicago.edu}.}}
\headers{Non-asymptotic Analysis of Mat\'ern Regression}{D. Sanz-Alonso}
\makeatletter
\let\combinedsavedmaketitle\maketitle
\let\combinedsavedatmaketitle\@maketitle
\let\combinedsavedthanks\thanks
\makeatother

\ifpdf
\hypersetup{
  pdftitle={Non-asymptotic Analysis of Matern Regression:
  The Roles of Target and Kernel Lengthscales},
  pdfauthor={Daniel Sanz-Alonso}
}
\fi

\begin{document}

\maketitle

\begin{abstract}
Theoretical guarantees for kernel regression are typically formulated in terms
of smoothness, but practical accuracy depends critically on how the design
resolution compares with the target and kernel lengthscales.  We develop a finite-sample theory for
Mat\'ern regression on periodic domains with quasi-uniform designs, covering
noiseless interpolation and noisy kernel ridge regression.  Our minimax result
shows that accurate recovery requires a design dense enough to resolve the
target lengthscale and sufficient information at that scale to overcome noise.
We prove that Mat\'ern interpolation additionally requires the design to resolve the kernel
lengthscale: if the kernel is too short relative to the point spacing, an
additional error remains even when the target is well resolved.  For noisy
Mat\'ern regression, we derive a three-term fixed-ridge risk characterization---target-scale bias,
kernel-scale bias, and variance---and show that optimizing over the ridge
parameter yields four distinct contributions.  When the target
is no more than twice as smooth as the kernel, choosing a kernel lengthscale
longer than the target does not worsen the oracle risk, whereas choosing one too
short can.  Thus these resolution conditions determine when accurate recovery
becomes possible, while
smoothness determines how rapidly the error decreases thereafter.
\end{abstract}

\begin{keywords}
Mat\'ern interpolation, kernel ridge regression, Gaussian process regression,
Mat\'ern kernel, scattered-data approximation, lengthscale, minimax risk,
quasi-uniform design
\end{keywords}

\begin{MSCcodes}
65D15, 62G08, 41A63, 60G15
\end{MSCcodes}

\section{Introduction}\label{sec:introduction}

In approximation theory and nonparametric regression, results are commonly
formulated in terms of smoothness.  A typical theorem assumes that a target
function belongs to a Sobolev ball and then describes an algebraic convergence
rate as the design becomes dense.  Such a
statement identifies the eventual decay rate but does not answer a key
finite-sample question: does the design resolve the spatial scale on which the
target varies?  A smooth target with a short lengthscale can be much harder to
learn than a rougher target with a longer lengthscale before either problem
reaches its asymptotic regime.

This paper gives a scale-explicit answer.  Throughout, we use Mat\'ern
regression in a broad sense that includes noiseless minimum-norm interpolation
when the ridge parameter is zero.  We work on the \(d\)-dimensional torus
with an arbitrary prescribed quasi-uniform design of \(N\) points.  The target
class has radius \(A\), lengthscale \(\ell\), and
Sobolev regularity \(\beta\); the Mat\'ern kernel has an independently chosen
lengthscale \(\rho\) and regularity \(\tau\).  We write \(h\) for the fill
distance and \(\sigma\) for the noise standard deviation.  Our estimates track
the distinct roles of these quantities and remain uniform as the target and
kernel lengthscales, design resolution, signal amplitude, and noise level vary
with the sample size.

The main conclusions can be stated informally as follows.  The minimax theorem
identifies two requirements imposed by the target.  Uniform recovery is
impossible unless the design resolves its lengthscale, \(h\lesssim\ell\), and
the information available at that scale satisfies
\[
  \Iell:=\frac{A^2N\ell^d}{\sigma^2}\gtrsim1.
\]
For a quasi-uniform design, \(N\ell^d\) counts observations in a target-scale
volume up to constant factors, and \(\Iell\) multiplies this count by
\(A^2/\sigma^2\).

Mat\'ern interpolation imposes an additional requirement determined by the
kernel: the design must resolve the kernel lengthscale; that is,
\(h\lesssim\rho\).  In noisy Mat\'ern kernel ridge regression (KRR), the kernel scale
also has its own information level,
\[
  \Irho:=\frac{A^2N\rho^d}{\sigma^2}.
\]
The resulting oracle risk is characterized by the largest of four
contributions: geometry and information at each of the target and kernel
scales.  The scale ratios determine when the error begins to decrease, while
smoothness determines the decay exponents thereafter.

Our analysis further reveals that kernel lengthscales above and below the
target lengthscale affect the best achievable risk differently.  In the range
\(d/2<\beta\leq2\tau\), optimizing the ridge parameter
separately for each kernel lengthscale shows that taking \(\rho\geq\ell\)
does not degrade the best statistical risk, although it can worsen numerical
conditioning.  Taking \(\rho<\ell\) is not immediately harmful, but a sufficiently small kernel
lengthscale introduces an additional limitation from point spacing or noise.
This distinction is invisible in a fixed-lengthscale asymptotic statement,
where all lengthscale dependence is absorbed into constants.

\subsection{Contributions and proof ideas}

Our three main results are stated in \cref{sec:main-results}.
\Cref{thm:minimax} is an estimator-independent minimax benchmark.
\Cref{thm:interpolation} is a sharp Mat\'ern interpolation theorem that holds for every
bounded-mesh-ratio design.  \Cref{thm:oracle} is a finite-sample oracle
characterization for scalar-ridge Mat\'ern KRR.  Uniformly over all admissible
scales, and writing \(\Risk_\rho^{\mathrm{or}}\) for the oracle risk defined
below, the oracle risk has the compact form
\begin{equation}\label{eq:intro-four-term}
 \frac{\Risk_\rho^{\mathrm{or}}}{A^2}
 \asymp
 1\wedge\max\left\{
 (h/\ell)^{2\beta},
 (h/\rho)^{4\tau},
 \Iell^{-2\beta/(2\beta+d)},
 \Irho^{-4\tau/(4\tau+d)}
 \right\}.
\end{equation}
The individual decay exponents in \eqref{eq:intro-four-term} have classical
and recent precedents.  The contribution is their scale-uniform coupling,
together with matching kernel-specific lower bounds for every prescribed
quasi-uniform design.  Under the fixed-ridge resolution condition stated in
\cref{thm:oracle}, we prove the sharper pointwise statement that, for every
deterministic scalar ridge parameter, the risk is characterized by three
terms: target bias, kernel bias, and variance;
\eqref{eq:intro-four-term} follows by optimizing this profile and using the
zero estimator when the risk remains of order \(A^2\).

The interpolation and KRR upper bounds first truncate the target
at the effective resolution and then use scale-uniform sampling stability.  The
retained band-limited part is written as the Mat\'ern operator applied to a
source.  Although the norm of this source grows with the kernel lengthscale,
the recovery resolvent supplies exactly the compensating decay.  The lower bound for a
small kernel lengthscale uses a separated-point Fourier frame inequality.  It
shows that fitting even a low Fourier mode with Mat\'ern translates having a
small kernel lengthscale necessarily leaks energy into unresolved modes.  A rank-one
resolvent calculation propagates the same obstruction to positive ridge.
Finally, let \(r\) be the larger of the design resolution and the smoothing scale
imposed by ridge regularization, as defined in \eqref{eq:eta-r}.  A
low-frequency argument shows that KRR retains on the order of \(r^{-d}\)
effective degrees of freedom.  Since noise contributes variance of order
\(\sigma^2/N\) per degree of freedom, this yields the matching variance lower
bound \(\sigma^2/(Nr^d)\).

\subsection{Relation to prior work}

Sampling numbers, optimal recovery, and minimax estimation provide many
ingredients for \cref{thm:minimax}; see
\cite{kriegullrich2021,krieg2023sampling,dolbeault2023sampling,
devore2025optimal,grochenig2020sampling}.  The
classical literature on radial basis functions and Sobolev kernels establishes
fill-distance error bounds, inverse estimates, and saturation phenomena
\cite{schaback1999improved,schabackwendland2002inverse,
narcowich2006sobolev,wendland2005scattered,bejancu2022matern}.
Recent work studies smoothness misspecification, shape parameters, and KRR
saturation from several complementary viewpoints
\cite{teckentrup2020convergence,sanzalonso2025misspecification,
tuowang2020kriging,wynne2021misspecified,wangjing2022krr,
fischersteinwart2020sobolev,lizhanglin2024saturation,
larssonschaback2024scaling,
wenzelsantin2026shape,wenzel2026inverse,
karvonensantinwenzel2025superconvergence}.

In particular, Wenzel and Santin \cite{wenzelsantin2026shape} obtain sharp
shape-parameter direct, inverse, and saturation statements along asymptotic
quasi-uniform sequences.  Their results do not give a class-wise finite-\(h\)
lower bound for each prescribed design and do not treat noise or minimax
regression.  Recent inverse and superconvergence results further characterize
the approximation rates associated with power-space smoothness
\cite{wenzel2026inverse,karvonensantinwenzel2025superconvergence}.  Refined multivariate
Ingham inequalities also give stability estimates for finitely smooth kernel
matrices \cite{wenzeliske2026spectral}.  Our new lower-bound mechanism is the
finite-exclusion frame combined with a rank-one recovery resolvent: it gives a
class-wise obstruction at fixed \(h\), uniformly over every prescribed
bounded-mesh-ratio design, and connects that obstruction to the target-scale
terms in the minimax risk for noisy regression.

A complementary lengthscale-aware perspective is developed by Addy, Latz, and
Teckentrup \cite{addylatzteckentrup2026}.  They construct lengthscale-informed
sparse grids for high-dimensional anisotropic interpolation with separable
Mat\'ern kernels, adapting both the grid and kernel to the target anisotropy,
and establish upper bounds and fast algorithms.  In contrast, we work with
arbitrary quasi-uniform designs, and in our setting the target and kernel
lengthscales may differ.  We also allow noisy observations and ridge
regularization and prove
matching lower and upper bounds for minimax recovery, interpolation, and the
scalar-KRR oracle.  Thus their work uses lengthscale information to design
efficient high-dimensional interpolation schemes, whereas ours identifies
unavoidable resolution and information barriers for a prescribed design.

Gaussian process (GP) contraction theory for rescaled Mat\'ern priors captures
local sample size and penalizes very small kernel lengthscales
\cite{vandervaartvanzanten2009adaptive,fangbhadra2025rescaled}, but uses
design-point loss and has no geometric \(h/\rho\) barrier.

The deterministic exponent \(4\tau\) and its statistical bias--variance
counterpart \(4\tau/(4\tau+d)\) have fixed-scale predecessors
\cite{wenzelsantin2026shape,wenzel2026inverse,lizhanglin2024saturation}.  Our
key contribution is a single finite-sample characterization coupling the target and
kernel lengthscales with design resolution, regularization, and noise,
uniformly over every sufficiently fine prescribed quasi-uniform design.  In the regime
\(d/2<\beta\leq2\tau\), the resulting phase structure reveals a qualitative asymmetry not visible
in the separate fixed-scale results: after optimizing ridge, \(\rho\geq\ell\)
does not worsen statistical risk, whereas a sufficiently small ratio
\(\rho/\ell\) introduces additional barriers from point spacing or noise.  In
particular, the additional geometric barrier is not a lattice-specific aliasing
phenomenon; it holds uniformly for every prescribed quasi-uniform design.

\subsection{Organization}

\Cref{sec:setting} fixes the model and assumptions, and
\cref{sec:main-results} states the three main theorems.  \Cref{sec:proofs}
proves them after collecting preliminary estimates.  \Cref{sec:numerics}
reports numerical experiments, and \cref{sec:discussion} discusses scope and
limitations.  We provide auxiliary results and technical details in the supplement.

\section{Model and assumptions}\label{sec:setting}

\subsection{Observations and Mat\'ern regression}

Let \(\T^d=(\mathbb R/\mathbb Z)^d\) have normalized Haar measure.  At
distinct design points \(X_N=\{x_1,\ldots,x_N\}\subset\T^d\), we observe
\begin{equation}\label{eq:data-model}
 y_i=f(x_i)+\xi_i,
 \qquad
 \xi_i\stackrel{\mathrm{iid}}{\sim}\mathcal N(0,\sigma^2),
 \qquad 1\leq i\leq N.
\end{equation}
The estimation problem is to recover the unknown real-valued target function \(f\)
under squared \(L^2(\T^d)\) loss.  The case \(\sigma=0\) is noiseless
scattered-data approximation.

For \(\tau>d/2\) and \(0<\rho\leq1\), consider the periodic Mat\'ern kernel
\(K_{\rho,\tau}\) with Fourier coefficients
\begin{equation}\label{eq:matern-spectrum}
 \mu_{\rho,k}
 =\rho^d\bigl(1+(2\pi\rho|k|)^2\bigr)^{-\tau},
 \qquad k\in\Z^d.
\end{equation}
The factor \(\rho^d\) keeps the kernel diagonal
\(K_{\rho,\tau}(0)\) of order one uniformly for \(0<\rho\leq1\), so varying
\(\rho\) does not also rescale the kernel amplitude.  It also makes
\(\eta=\lambda/\rho^d\) the natural normalized ridge parameter below.
The kernel lengthscale is \(\rho\); equivalently, \(\rho^{-1}\) is the shape
parameter in the standard radial-basis-function scaling convention
\cite{larssonschaback2024scaling}.  The exponent \(\tau\) is the Sobolev
order of the associated reproducing kernel Hilbert space (RKHS); the usual
Mat\'ern smoothness parameter is \(\tau-d/2\); see
\cite{stein1999interpolation}.

Let \(\cH_{\rho,\tau}\) be the RKHS of \(K_{\rho,\tau}\).  For
\(\lambda\geq0\), define
\begin{equation}\label{eq:krr}
 \widehat f_{\lambda,\rho}
 \in\arg\min_{g\in\cH_{\rho,\tau}}
 \left\{
 \frac1N\sum_{i=1}^N|y_i-g(x_i)|^2
 +\lambda\norm{g}_{\cH_{\rho,\tau}}^2
 \right\}.
\end{equation}
At \(\lambda=0\), the objective may have multiple minimizers; we select the
minimum-RKHS-norm interpolant of the data.  For \(\sigma=0\), this is ordinary
Mat\'ern interpolation.

\subsection{Target class and design geometry}

Write \(e_k(x)=\exp(2\pi i k\cdot x)\) and
\(f_k^{\ft}:=\int_{\T^d}f(x)e^{-2\pi i k\cdot x}\dd x\) for the Fourier
coefficients of \(f\).  For \(\beta>d/2\), \(A>0\), and \(0<\ell\leq1\),
define
\begin{equation}\label{eq:target-class}
 \norm{f}_{\beta,\ell}^2
 :=\sum_{k\in\Z^d}
 \bigl(1+(2\pi\ell|k|)^2\bigr)^\beta |f_k^{\ft}|^2,
 \qquad
 \cF_\beta(A,\ell):=\{f:\norm{f}_{\beta,\ell}\leq A\}.
\end{equation}
For fixed \(\ell>0\), \(\norm{\cdot}_{\beta,\ell}\) is equivalent to the
periodic \(H^\beta\) norm, with \(\ell\)-dependent constants; when \(\ell=1\),
the two norms agree exactly.  Its weight grows
above frequency \(\ell^{-1}\), penalizing shorter-scale variation.  The radius \(A\) in
\(\cF_\beta(A,\ell)\) is measured in \(\norm{\cdot}_{\beta,\ell}\).  Because
\(\norm{f}_2\leq\norm{f}_{\beta,\ell}\), every
\(f\in\cF_\beta(A,\ell)\) also satisfies \(\norm{f}_2\leq A\), independently
of \(\ell\).  A scale-dependent convention replaces \(A^2\) in
\eqref{eq:target-class} by \(A^2\ell^d\), or equivalently replaces \(A\)
throughout by \(A\ell^{d/2}\).  We use the global-radius convention so that
\(\ell\) controls the variation scale but not the maximal \(L^2\) amplitude.
Finally, the assumption \(\beta>d/2\) ensures that point evaluation is
continuous, so functions in \(\cF_\beta(A,\ell)\) can be sampled at the design
points.

The fill distance, separation radius, and mesh ratio of \(X_N\) are
\begin{equation}\label{eq:geometry}
 h:=\sup_{x\in\T^d}\min_i\dist(x,x_i),
 \qquad
 q:=\frac12\min_{i\neq j}\dist(x_i,x_j),
 \qquad
 \gamma:=h/q.
\end{equation}
We assume that the design is quasi-uniform: its mesh ratio satisfies
\(\gamma\leq\bar\gamma\) for a fixed constant \(\bar\gamma\).  Consequently,
\(N\asymp h^{-d}\).  This assumption allows jittered
lattices, maximin designs, and many deterministic scattered designs; none of
the results requires an exact grid.

The three spatial scales have different roles.  The fill distance \(h\) is
the resolution of the design.  By \eqref{eq:target-class}, \(\ell\) is the
target variation scale; the Mat\'ern multiplier in
\eqref{eq:matern-spectrum} changes at frequency \(\rho^{-1}\), so \(\rho\)
is the kernel lengthscale.  Thus \(h/\ell\) and \(h/\rho\) measure target
and kernel resolution.  When either \(\ell\) or \(\rho\) is at least \(h\),
quasi-uniformity implies that a region with diameter comparable to that
lengthscale contains order \(N\ell^d\) or \(N\rho^d\) design points,
respectively.

Throughout, \(c,C>0\) and constants implicit in \(\lesssim\), \(\gtrsim\),
and \(\asymp\) may change from line to line.  Unless noted, they depend only
on \(d,\beta,\tau,\bar\gamma\); all named constants have the same uniformity
unless a different dependence is stated.  We suppress the dependence of the risk symbols on
\(A,\ell,X_N,\sigma\).

\section{Main results}\label{sec:main-results}

We first establish the minimax benchmark, which contains the target-resolution
and target-information terms.  We then give sharp interpolation bounds,
adding the kernel-resolution term, and finally characterize the scalar-KRR
oracle, in which noise introduces the corresponding kernel-information term.  We
conclude by explaining how these four terms determine the optimal ridge scale
and the asymmetry between longer and shorter kernel lengthscales.

\subsection{The minimax resolution benchmark}

Recall the target information level
\begin{equation}\label{eq:target-information}
 \Iell:=\frac{A^2N\ell^d}{\sigma^2},
\end{equation}
with \(\Iell=\infty\) when \(\sigma=0\).  Let \(\E_f\) denote expectation
under the data model with target \(f\), and define the unrestricted minimax risk
\begin{equation}\label{eq:minimax-risk}
 \Risk^\star
 :=\inf_{\widehat f}
 \sup_{f\in\cF_\beta(A,\ell)}
 \E_f\bignorm{\widehat f-f}_2^2.
\end{equation}
In words, \(\Risk^\star\) is the smallest worst-case expected squared
\(L^2\) error that any estimator~\(\widehat f\) based on the observations can
achieve over the target class \(\cF_\beta(A,\ell)\).

\begin{theorem}[Scale-explicit minimax risk]\label{thm:minimax}
Fix \(d\in\mathbb N\), \(\beta>d/2\), and \(\bar\gamma\geq1\).  There is
\(h_0>0\) such that, if \(h\leq h_0\), \(\gamma\leq\bar\gamma\), and
\(0<\ell\leq1\), then
\begin{equation}\label{eq:minimax-law}
 \Risk^\star
 \asymp
 A^2\left[1\wedge\max\left\{
 (h/\ell)^{2\beta},
 \Iell^{-2\beta/(2\beta+d)}
 \right\}\right].
\end{equation}
\end{theorem}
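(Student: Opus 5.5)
We prove matching upper and lower bounds separately. The trivial bound \(\Risk^\star\leq A^2\) comes from the zero estimator, since \(\norm{f}_2\leq A\) on \(\cF_\beta(A,\ell)\). It therefore suffices to show two things: an upper bound of order \(A^2\max\{(h/\ell)^{2\beta},\Iell^{-2\beta/(2\beta+d)}\}\), and lower bounds of order \(A^2[1\wedge(h/\ell)^{2\beta}]\) and \(A^2[1\wedge\Iell^{-2\beta/(2\beta+d)}]\) separately.

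\emph{Upper bound.} We use a band-limited least-squares estimator. Choose a cutoff frequency \(M=c_0\min\{h^{-1},\ell^{-1}\Iell^{1/(2\beta+d)}\}\) and let \(V_M=\mathrm{span}\{e_k:|k|\leq M\}\), which has dimension of order \(M^d\). With \(c_0\) small, the Marcinkiewicz--Zygmund/Fourier frame inequality for quasi-uniform designs gives \(\frac1N\sum_i|g(x_i)|^2\asymp\norm{g}_2^2\) for all \(g\in V_M\). This is the scale-uniform sampling stability the paper invokes, and we take it as a lemma via Ingham-type inequalities.

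Let \(\widehat f\) be the least-squares projection of the data onto \(V_M\), with respect to the empirical inner product. The error splits into three parts:
\begin{itemize}
\item the truncation tail, bounded by \(\norm{f-P_Mf}_2^2\leq A^2(1+(2\pi\ell M)^2)^{-\beta}\lesssim A^2(\ell M)^{-2\beta}\) when \(\ell M\gtrsim1\);
\item an aliasing term, where the empirical projection of the tail \(f-P_Mf\) must be controlled;
\item a variance term of order \(\sigma^2 M^d/N\).
\end{itemize}
The aliasing term is the main obstacle. We would bound it by the empirical norm \(\frac1N\sum_i|(f-P_Mf)(x_i)|^2\) and control that by a sampling inequality for \(H^\beta\) functions whose spectrum lies above \(M\). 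Summing dyadic shells \(2^jM\leq|k|<2^{j+1}M\), a Bernstein/local-averaging estimate bounds the empirical norm of each shell by \((1+(2^jMh)^{d})\) times its \(L^2\) norm. Since \(\beta>d/2\), the sum converges and gives \(\lesssim A^2(\ell M)^{-2\beta}(1+(Mh)^{d})\), which is \(\lesssim A^2(\ell M)^{-2\beta}\) because \(Mh\lesssim1\).

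With these bounds, balancing \((\ell M)^{-2\beta}\) against \(\sigma^2M^d/(A^2N)=(\ell M)^d/\Iell\) gives \(\Iell^{-2\beta/(2\beta+d)}\). When the cutoff is instead saturated at \(h^{-1}\), the bias is \((h/\ell)^{2\beta}\) and the variance is smaller. If \(\ell M\lesssim1\), the claimed bound already exceeds order \(A^2\), and the zero estimator handles it.

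\emph{Lower bound, geometric term.} By pigeonhole, among the roughly \(Ch^{-d}\) frequencies of size about \(c/h\), with \(C\) large relative to \(N\), there is a nonzero function \(g\) in their span vanishing at all \(N\) points. This follows from a dimension count, since \(2N\) real constraints are imposed on more real dimensions. Normalize \(\norm{g}_{\beta,\ell}=A\). Then \(\norm{g}_2^2\asymp A^2(1+(h^{-1}\ell)^2)^{-\beta}\asymp A^2[1\wedge(h/\ell)^{2\beta}]\). Because \(\pm g\) produce identical data, Le Cam's two-point argument gives risk at least \(\norm{g}_2^2\).

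\emph{Lower bound, information term.} We use Assouad's lemma on a hypercube \(f_\omega=\delta\sum_{k\in S}\omega_k\phi_k\), where \(\omega_k\in\{\pm1\}\) and \(S\) is the set of roughly \(m^d\) frequencies with \(|k|\leq m\). To keep the design-dependent Kullback--Leibler divergence controlled, take \(m\leq c/h\) so that the sampling norm equivalence bounds it by \(N\delta^2/\sigma^2\) per flipped coordinate. Membership in the class requires \(m^d\delta^2(\ell m)^{2\beta}\lesssim A^2\), and per-coordinate indistinguishability requires \(N\delta^2\lesssim\sigma^2\). Assouad then gives risk \(\gtrsim m^d\delta^2\). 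Optimizing over \(m\) gives \(A^2\Iell^{-2\beta/(2\beta+d)}\) when \(\ell m\asymp\Iell^{1/(2\beta+d)}\geq1\) is compatible with \(m\lesssim h^{-1}\). Otherwise either the geometric term dominates, or \(\Iell\lesssim1\) and a single low mode with \(m=1\) gives order \(A^2\).

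The hypercube coordinates should be orthogonal in the empirical inner product for the KL computation. We would handle this with the frame inequality, or by taking a Gaussian prior over the band and computing the Bayes risk directly.
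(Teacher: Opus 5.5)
Your upper bound and your geometric lower bound are sound. The information lower bound, however, contains a step that fails in the regime \(\Iell\lesssim1\), where you propose ``a single low mode with \(m=1\)''. A perturbation \(\pm\delta\phi\) built from \(O(1)\) fixed modes is indistinguishable only if \(N\delta^2\lesssim\sigma^2\), so it yields risk \(\gtrsim\min\{A^2,\sigma^2/N\}\). But \(\Iell\lesssim1\) only gives \(\sigma^2/N\gtrsim A^2\ell^d\). For example, take \(\sigma^2=2A^2N\ell^d\) and \(h\ll\ell\ll1\): the theorem then requires a lower bound of order \(A^2\), while your construction gives only \(A^2\ell^d\).

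Under the global-radius convention, the class can spread its \(L^2\) mass over \(\asymp\ell^{-d}\) independent modes, and the relevant per-mode information is \(\Iell\), not \(A^2N/\sigma^2\). The fix stays inside your own Assouad framework: take \(\ell m\asymp1\) rather than \(m=1\), that is, \(\asymp\ell^{-d}\) modes with \(|k|\lesssim\ell^{-1}\) and \(\delta^2\asymp A^2\ell^d\). Then \(\norm{f_\omega}_{\beta,\ell}\lesssim A\), the neighbor KL divergence is \(\lesssim N\delta^2/\sigma^2\asymp\Iell\lesssim1\), and Assouad gives \(\gtrsim m^d\delta^2\asymp A^2\). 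If \(\ell\lesssim h\), the geometric term is already of order one. This is exactly what the paper's choice \(L_\ell=1\vee\Iell^{1/(2\beta+d)}\), \(K=\lceil L_\ell/\ell\rceil\) accomplishes.

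Apart from this, your route differs from the paper's only in harmless ways.

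\begin{itemize}
\item \emph{Null function.} You obtain it from a dimension count in \(\operatorname{span}\{e_k:|k|\le C'/h\}\), which uses \(N\lesssim h^{-d}\). The paper instead places a rescaled bump in a hole around a point at distance \(h\) from the design, which needs no packing bound. Both give \(\norm{g}_2^2\gtrsim A^2[1+(\ell/h)^2]^{-\beta}\).
\item \emph{Aliasing.} Your dyadic large-sieve estimate works provided you sum the shell norms, not the squared norms, via the triangle inequality, since the shells are not orthogonal in \(\ip{\cdot}{\cdot}_N\). The paper uses the trace estimate \eqref{eq:trace}, which produces the harmless extra term \(A^2(h/\ell)^{2\beta}\) in one line.
\item \emph{Hypercube orthogonality and \(m\lesssim h^{-1}\).} Neither is needed. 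The KL divergence between neighbors is \(2\delta^2\sigma^{-2}\sum_i\phi_k(x_i)^2\le2N\delta^2\norm{\phi_k}_\infty^2/\sigma^2\), so uniform boundedness of the \(\phi_k\) suffices. Your restriction is harmless only because, as you correctly note, the geometric term dominates whenever it binds.
\end{itemize}
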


The upper bound uses the zero estimator when the bracketed term is of order one
and equal-weight least squares over a suitably chosen low-frequency
trigonometric space otherwise.
The theorem separates geometric and statistical resolution.  Since
\(N\asymp h^{-d}\), its noiseless rate is
\(A^2\min\{1,(N\ell^d)^{-2\beta/d}\}\), which for fixed \(\ell\) gives the
familiar rate \(N^{-2\beta/d}\).  With fixed \(\ell,A,\sigma>0\), the noisy
risk remains of order \(A^2\) until both resolution thresholds are crossed and
then eventually has the familiar rate \(N^{-2\beta/(2\beta+d)}\).

\subsection{Sharp Mat\'ern interpolation}

For \(f\in\cF_\beta(A,\ell)\), let \(\mathcal I_{\rho,X_N}f\) denote the solution of
\eqref{eq:krr} with \(\lambda=0\) and data \(y_i=f(x_i)\); equivalently,
\(\mathcal I_{\rho,X_N}f\) is the minimum-RKHS-norm function interpolating \(f\) on
\(X_N\).

\begin{theorem}[Target and kernel resolution in interpolation]
\label{thm:interpolation}
Fix \(d\in\mathbb N\), \(\tau>d/2\),
\(d/2<\beta\leq2\tau\), and \(\bar\gamma\geq1\).  There is
\(h_0>0\) such that, if
\(h\leq h_0\), \(\gamma\leq\bar\gamma\),
\(h\leq\min\{\ell,\rho\}\), and \(0<\ell,\rho\leq1\), then
\begin{equation}\label{eq:interpolation-law}
 \sup_{f\in\cF_\beta(A,\ell)}
 \bignorm{\mathcal I_{\rho,X_N}f-f}_2^2
 \asymp
 A^2\max\left\{(h/\ell)^{2\beta},(h/\rho)^{4\tau}\right\}.
\end{equation}
\end{theorem}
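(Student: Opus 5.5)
The plan is to prove the upper bound and the two lower bounds separately. The lower bounds are: the target-resolution term \((h/\ell)^{2\beta}\), which follows from \cref{thm:minimax} with \(\sigma=0\), and the kernel-resolution term \((h/\rho)^{4\tau}\), which needs a kernel-specific argument.

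\emph{Upper bound.} Fix \(f\in\cF_\beta(A,\ell)\). Let \(M\asymp h^{-1}\) be a frequency cutoff, with constant \(c\) small enough that trigonometric polynomials of degree \(\leq M\) satisfy a Marcinkiewicz--Zygmund inequality on \(X_N\) uniformly for quasi-uniform designs. Split \(f=P_Mf+(f-P_Mf)\).

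For the tail, \(\norm{f-P_Mf}_2^2\lesssim A^2(M\ell)^{-2\beta}\asymp A^2(h/\ell)^{2\beta}\). Its interpolant is controlled by the standard sampling inequality \(\norm{g}_2\lesssim \norm{g|_{X_N}}_{\ell^2_N}+h^{\tau}\)-type seminorm terms. Applied to \(\mathcal I_\rho(f-P_Mf)\), it needs care to be uniform in \(\rho\): I would use the scale-uniform sampling stability that the paper announces, and bound the tail's sampled values by \(\ell\)-uniform Sobolev estimates. Here \(\beta>d/2\) makes the sample energy comparable to \(A^2(h/\ell)^{2\beta}\).

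For the band-limited part \(p=P_Mf\), write \(p=T_\rho s\), where \(T_\rho\) is the Mat\'ern operator with multiplier \(\mu_{\rho,k}\), so that \(s_k=p_k^{\ft}/\mu_{\rho,k}\). Interpolation is the orthogonal projection in \(\cH_{\rho,\tau}\) onto \(\mathrm{span}\{K_\rho(\cdot-x_i)\}\). The standard doubling trick for \(\beta\le 2\tau\) (saturation) then gives
\[
\norm{p-\mathcal I_\rho p}_2^2\lesssim h^{4\tau}\text{-type factor}\times \norm{s}^2,
\]
with the \(\rho\) scaling tracked through
\[
\mu_{\rho,k}^{-1}\mu_{\rho,k}^{1/2}\ldots
\]
Concretely, I would interpolate between the cases where \(p\) lies in \(\cH\) and where \(p\in T_\rho L^2\), with exponents weighted by \(\beta/(2\tau)\). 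For frequencies \(|k|\le\ell^{-1}\) the weight is \(\asymp1\); for \(\ell^{-1}\le|k|\le M\) the factor \((\rho|k|)^{4\tau}\) is absorbed using \((\ell|k|)^{2\beta}\) and \(\beta\le2\tau\). This produces \(\max\{(h/\ell)^{2\beta},(h/\rho)^{4\tau}\}\). The exact bookkeeping is that the source norm grows like \(\rho^{-d}\) times frequency powers, and the "recovery resolvent" contributes matching decay \((h/\rho)^{4\tau}\rho^{d}\ldots\).

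\emph{Kernel lower bound.} This is the main obstacle. Take \(f=A e_{k_0}\) with \(|k_0|\asymp 1\), which lies in the class since \(\ell\le1\). I would show that any function in \(\mathrm{span}\{K_\rho(\cdot-x_i)\}\) agreeing with \(e_{k_0}\) on \(X_N\) has \(L^2\) error \(\gtrsim (h/\rho)^{2\tau}\). The intended mechanism: write the interpolant as \(\sum_k \mu_{\rho,k}\hat c(k)e_k\), where \(\hat c(k)=\sum_i c_ie^{-2\pi ik\cdot x_i}\) is periodic-like under aliasing. The interpolation constraint forces \(\hat c\) to carry energy of order the low-mode value at frequencies \(k\) with \(|k|\asymp h^{-1}\). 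This would follow from a separated-point Fourier frame (Ingham-type) inequality, which says \(\sum_{|k|\le C/q}|\hat c(k)|^2\gtrsim q^{-d}\sum|c_i|^2\) after excluding finitely many low modes. Since \(\mu_{\rho,k}/\mu_{\rho,k_0}\asymp (\rho|k|)^{-2\tau}\asymp (h/\rho)^{2\tau}\) at those frequencies, the leaked energy squares to \(A^2(h/\rho)^{4\tau}\).

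The delicate point is showing that the coefficients \(c\) cannot be arranged to cancel at the high band. I would use the minimal-norm characterization, namely that \(c=K^{-1}f|_X\), together with an upper bound on the kernel-matrix eigenvalues of order \(\rho^d(h/\rho)^{-d}\)\ldots, ensuring that \(\norm{c}\) is large enough. The condition \(h\le\rho\) keeps the constants uniform.
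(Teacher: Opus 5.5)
Your upper bound and target lower bound follow the paper's route. You cut off at frequency $\asymp h^{-1}$ and control the tail with the trace estimate \eqref{eq:trace} and the $\rho$-uniform stability \eqref{eq:data-stability-main}. You then write the retained part as $\bar T_{\rho,\tau}v$, apply the doubling (source--resolvent) bound \eqref{eq:source-resolvent-main}, and bound $\norm v_2$ by the multiplier ratio. The $(h/\ell)^{2\beta}$ term comes from the null function behind \cref{thm:minimax}.

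One bookkeeping fix in the upper bound: for $|k|\le\ell^{-1}$, the ratio $(1+(2\pi\rho|k|)^2)^{2\tau}/(1+(2\pi\ell|k|)^2)^{\beta}$ is $\asymp1$ only when $\rho\le\ell$. For $\rho>\ell$ it grows to order $(\rho/\ell)^{4\tau}$. This is harmless, because $(h/\rho)^{4\tau}(\rho/\ell)^{4\tau}=(h/\ell)^{4\tau}\le(h/\ell)^{2\beta}$; this is the paper's cancellation \eqref{eq:resolvent-cancellation}.

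The kernel lower bound is where you genuinely differ. The paper (\cref{lem:low-mode-leakage-main} with $\nu=0$) never uses the interpolation equations. Cauchy--Schwarz, $|\mathcal A_\nu|^2\le N\norm\alpha_{\ell^2}^2$, turns the frame bound into leakage $\ge b|\mathcal A_\nu|^2$. Minimizing $|1-\bar\mu_{\rho,\nu}z|^2+b|z|^2$ over $z\in\mathbb C$ then bounds the distance from $e_\nu$ to the whole translate span. That span-level statement is what the paper reuses later for positive ridge and for augmented mean spaces, and it needs no upper eigenvalue estimate.

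You instead use the interpolation system $G_X(N\alpha)=f|_{X_N}$ together with an upper eigenvalue bound to force $q^{-d}\norm\alpha_{\ell^2}^2\gtrsim1$. The frame inequality then gives leakage $\gtrsim(h/\rho)^{4\tau}$ directly. This is valid, though specific to the interpolant. The eigenvalue order it needs is $\lambda_{\max}(G_X)\lesssim1$, equivalently $\lambda_{\max}\bigl([K_{\rho,\tau}(x_i-x_j)]_{i,j}\bigr)\lesssim(\rho/h)^d$. This follows for $h\le\rho$ from \eqref{eq:trace} with $s=\tau$, since $\lambda_{\max}(G_X)=\norm S^2$ and $h^{2\tau}\norm u_{H^\tau}^2\lesssim\norm u_{\tau,\rho}^2$. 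Your stated order $\rho^d(h/\rho)^{-d}$ is too small by a factor $\rho^d$ and is false, although the correct bound gives exactly the needed $(h/\rho)^{4\tau}$.

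Finally, $Ae_{k_0}\notin\cF_\beta(A,\ell)$ for $k_0\neq0$. Rescale it by $[1+(2\pi\ell|k_0|)^2]^{-\beta/2}\asymp1$ and pass to a real mode, or simply use the constant target $A$, as the paper does.
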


The theorem identifies two geometric requirements: the design must resolve
both \(\ell\) and \(\rho\).  If \(\rho\geq\ell\), interpolation attains the
minimax geometric rate in \cref{thm:minimax}; if \(\rho<\ell\), the kernel
term can dominate even when the design adequately resolves the target
lengthscale.  The kernel-resolution term is already attained by a constant
target, but the next proposition shows that it is not merely a
missing-intercept effect.

\begin{proposition}[Kernel-resolution lower bound at the target scale]
\label{prop:target-scale-mode}
Under the assumptions of \cref{thm:interpolation}, there exist a frequency
\(\nu_\ell\in\Z^d\) satisfying
\(1\leq\ell|\nu_\ell|\leq2\) and a real
unit-norm sine or cosine mode \(\psi_{\nu_\ell}\) such that
\[
 f_\ell:=A\bigl[1+(2\pi\ell|\nu_\ell|)^2\bigr]^{-\beta/2}
 \psi_{\nu_\ell}\in\cF_\beta(A,\ell)
\]
and
\begin{equation}\label{eq:target-scale-mode}
 \bignorm{\mathcal I_{\rho,X_N}f_\ell-f_\ell}_2^2
 \gtrsim A^2(h/\rho)^{4\tau}.
\end{equation}
\end{proposition}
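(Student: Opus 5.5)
The plan is to prove \eqref{eq:target-scale-mode} by looking at the interpolant in Fourier space. The interpolant is a finite combination of kernel translates, and the idea is that reproducing the target mode forces its coefficient vector to carry a lot of energy at frequencies up to order \(h^{-1}\), where the kernel multiplier is small but not negligible.

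\textbf{Choice of mode.} Since \(0<\ell\leq1\), there is a lattice vector \(\nu=\nu_\ell\) with \(1\leq\ell|\nu|\leq2\); for instance take a multiple of a coordinate vector. Take \(\psi_\nu\) to be \(\sqrt2\cos(2\pi\nu\cdot x)\) or \(\sqrt2\sin(2\pi\nu\cdot x)\), to be fixed later. By construction \(\norm{f_\ell}_{\beta,\ell}=A\), and the amplitude satisfies \(a:=A[1+(2\pi\ell|\nu|)^2]^{-\beta/2}\asymp A\).

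\textbf{Fourier form of the interpolant.} Write the interpolant as \(s=\mathcal I_{\rho,X_N}f_\ell=\sum_i c_iK_{\rho,\tau}(\cdot-x_i)\). Its Fourier coefficients are \(s^{\ft}_k=\mu_{\rho,k}\widehat c(k)\), where \(\widehat c(k):=\sum_ic_ie^{-2\pi ik\cdot x_i}\). Because \(f_\ell\) is supported on \(\pm\nu\), Parseval gives
\[
\norm{s-f_\ell}_2^2\geq\sum_{k\neq\pm\nu}\mu_{\rho,k}^2|\widehat c(k)|^2 .
\]
Two cases arise.
\begin{itemize}
\item If \(|s^{\ft}_{\nu}|\) (or \(|s^{\ft}_{-\nu}|\)) is smaller than half the corresponding coefficient of \(f_\ell\), then the error already exceeds \(a^2/8\gtrsim A^2\). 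This is at least \(A^2(h/\rho)^{4\tau}\) because \(h\leq\rho\).
\item Otherwise \(\mu_{\rho,\nu}|\widehat c(\nu)|\gtrsim a\). By Cauchy--Schwarz, \(|\widehat c(\nu)|^2\leq N\norm c_2^2\), so
\[
\norm c_2^2\gtrsim \frac{a^2}{N\mu_{\rho,\nu}^2}.
\]
\end{itemize}

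\textbf{Finite-exclusion frame inequality.} This is the key step and the main obstacle. Let \(\Omega=\{k\in\Z^d:|k|_\infty\leq L\}\) with \(L=M/q\) and \(M\) a large constant. The aim is the lower frame bound
\[
\sum_{k\in\Omega}|\widehat c(k)|^2\geq c_0|\Omega|\norm c_2^2
\]
for \(q\)-separated points on \(\T^d\). I would try to prove it with a Beurling--Selberg or Fejér-type minorant. Take a trigonometric polynomial \(\Phi\) supported in \(\Omega\) with \(0\leq\hat\Phi\leq\mathbf 1_\Omega\), \(\Phi(0)\gtrsim|\Omega|\), and \(|\Phi(x)|\lesssim|\Omega|(1+L|x|)^{-d-1}\). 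Then \(\sum_{k}\hat\Phi(k)|\widehat c(k)|^2=\sum_{i,j}c_i\bar c_j\Phi(x_i-x_j)\), and the separation makes the off-diagonal sum at most a small fraction of \(\Phi(0)\norm c_2^2\) once \(M\) is large. Excluding \(\pm\nu\) removes at most \(2N\norm c_2^2\). Since \(|\Omega|\asymp (M/q)^d\gg N\asymp h^{-d}\) for \(M\) large depending on \(\bar\gamma\), this loss is absorbed. All frequencies in \(\Omega\) satisfy \(|k|\lesssim M\bar\gamma/h\), so \(\mu_{\rho,k}\gtrsim\rho^d(\rho/h)^{-2\tau}\) there, using \(h\leq\rho\).

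\textbf{Conclusion.} Combining the three steps,
\[
\norm{s-f_\ell}_2^2\gtrsim \rho^{2d}(h/\rho)^{4\tau}|\Omega|\norm c_2^2\gtrsim \rho^{2d}(h/\rho)^{4\tau}\frac{h^{-d}}{N}\,\frac{a^2}{\mu_{\rho,\nu}^2}.
\]
Now \(\mu_{\rho,\nu}\leq\rho^d\) and \(Nh^d\asymp1\), which gives \(\gtrsim A^2(h/\rho)^{4\tau}\). The inequality is in fact stronger by the factor \((1+(\rho/\ell)^2)^{2\tau}\). The constants depend only on \(d,\beta,\tau,\bar\gamma\). The real sine/cosine choice does not affect the argument, because the coefficients \(c_i\) are real and both frequencies \(\pm\nu\) are handled symmetrically.
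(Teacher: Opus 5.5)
Your proposal is correct and follows the paper's route. The paper proves this bound through \cref{lem:low-mode-leakage-main}, which rests on the finite-exclusion Fourier frame of \cref{lem:frame}; that frame is proved by the same smooth-cutoff, Poisson-summation and Gershgorin argument you sketch, and the Mat\'ern multiplier is then bounded below on the window \(|k|\lesssim q^{-1}\) exactly as you do. The differences are cosmetic: you exclude both \(\pm\nu\) and split into two cases, which gives the bound directly for either real mode, whereas the paper excludes only \(\nu\), minimizes the quadratic in \(\mathcal A_\nu\) exactly, and then uses the sine--cosine identity to select one real mode.
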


Here \(|\nu_\ell|\asymp\ell^{-1}\), so the target varies at lengthscale
\(\ell\).  The lower bound holds for every design under the stated mesh-ratio
condition, not merely for a lattice, so it is not an aliasing artifact.

\subsection{The four-term KRR oracle}

Recall the kernel information level
\begin{equation}\label{eq:kernel-information}
 \Irho:=\frac{A^2N\rho^d}{\sigma^2},
\end{equation}
with \(\Irho=\infty\) when \(\sigma=0\).  Just as \(\Iell\) in
\eqref{eq:target-information} measures information at the target lengthscale,
\(\Irho\) measures information at the kernel lengthscale.  Recall that
\(\widehat f_{\lambda,\rho}\) is the KRR estimator defined in \eqref{eq:krr}.
The oracle optimizes \(\lambda\) for a fixed kernel lengthscale \(\rho\).
Allowing also the zero estimator, viewed as the \(\lambda=\infty\) endpoint and having
worst-case squared \(L^2\) risk \(A^2\), we define
the scalar-KRR oracle risk at kernel lengthscale \(\rho\) by
\begin{equation}\label{eq:oracle-risk}
 \Risk_\rho^{\mathrm{or}}
 :=A^2\wedge\inf_{\lambda\geq0}
 \sup_{f\in\cF_\beta(A,\ell)}
 \E_f\bignorm{\widehat f_{\lambda,\rho}-f}_2^2.
\end{equation}

The following theorem characterizes this oracle risk uniformly over the target
and kernel lengthscales and gives the corresponding risk equivalence for every
deterministic ridge parameter whose effective resolution remains below both
lengthscales.

\begin{theorem}[Four-term Mat\'ern KRR oracle]\label{thm:oracle}
Fix \(d\in\mathbb N\), \(\bar\gamma\geq1\), \(\tau>d/2\), and
\(d/2<\beta\leq2\tau\).  There is \(h_0>0\) such that, if
\(h\leq h_0\), \(\gamma\leq\bar\gamma\), and
\(0<\ell,\rho\leq1\), then
\begin{equation}\label{eq:oracle-master}
 \Risk_\rho^{\mathrm{or}}
 \asymp
 A^2\left[1\wedge\max\left\{
 (h/\ell)^{2\beta},
 (h/\rho)^{4\tau},
 \Iell^{-2\beta/(2\beta+d)},
 \Irho^{-4\tau/(4\tau+d)}
 \right\}\right].
\end{equation}
Moreover, define \(\eta=\lambda/\rho^d\) and
\(r=\max\{h,\rho\eta^{1/(2\tau)}\}\) for \(\lambda\geq0\).  There exists
\(c_0>0\) such that, for every deterministic \(\lambda\geq0\) satisfying
\(r\leq\min\{\ell,c_0\rho\}\),
\begin{equation}\label{eq:fixed-ridge-law}
 \sup_{f\in\cF_\beta(A,\ell)}
 \E_f\bignorm{\widehat f_{\lambda,\rho}-f}_2^2
 \asymp
 A^2(r/\ell)^{2\beta}
 +A^2(r/\rho)^{4\tau}
 +\frac{\sigma^2}{Nr^d}.
\end{equation}
\end{theorem}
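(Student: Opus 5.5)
The plan is to prove the fixed-ridge law \eqref{eq:fixed-ridge-law} first and then deduce \eqref{eq:oracle-master} by optimizing over \(\lambda\).

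\textbf{Upper bound in \eqref{eq:fixed-ridge-law}.} Split the risk into squared bias plus variance:
\[
 \E_f\bignorm{\widehat f_{\lambda,\rho}-f}_2^2
 =\bignorm{\E_f\widehat f_{\lambda,\rho}-f}_2^2
 +\E\bignorm{\widehat f_{\lambda,\rho}-\E_f\widehat f_{\lambda,\rho}}_2^2 .
\]

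For the bias, I truncate \(f\) at frequency \(|k|\lesssim r^{-1}\). The tail \(f-P_{r}f\) has \(L^2\) norm at most \(A(r/\ell)^{\beta}\) and a comparably small \(H^\beta\) norm. Its contribution through the noiseless KRR map is controlled by a scale-uniform sampling inequality on the quasi-uniform design, the same one used for \cref{thm:interpolation}.

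For the band-limited part, I write \(P_rf=T_\rho g\), where \(T_\rho\) is the Mat\'ern operator with symbol \(\mu_{\rho,k}\). The KRR bias equals \(\lambda\) times the recovery resolvent applied to \(g\). The source norm of \(g\) grows like \(\rho^{-d}\) times \((1+(2\pi\rho|k|)^2)^{\tau}\) weights, while the resolvent supplies a factor of roughly \(\eta\,(1+(\rho|k|)^2)^{\tau}\) relative to the identity on the resolved band. Combining these, the low-frequency error is bounded by \(A^2\max\{(r/\ell)^{2\beta},(r/\rho)^{4\tau}\}\). Two points need care: the condition \(\beta\le2\tau\) is used so that the saturation exponent \(4\tau\) is not beaten, and the replacement of the discrete Gram operator by its continuous counterpart relies on \(r\ge h\) together with Marcinkiewicz–Zygmund type norming on the band.

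For the variance, the trace of the squared smoother is \(\lesssim\sigma^2 N^{-1}\cdot\#\{k:|k|\lesssim r^{-1}\}\asymp\sigma^2/(Nr^d)\).

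\textbf{Lower bounds in \eqref{eq:fixed-ridge-law}.} There are three terms to match.

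The target-bias term \((r/\ell)^{2\beta}\) comes from a single mode at frequency \(\asymp r^{-1}\) scaled to lie in \(\cF_\beta(A,\ell)\). This mode is either unresolved by the design (when \(r=h\), via a separated-point/fooling argument as in \cref{thm:minimax}) or shrunk by a constant factor by the ridge (when \(r=\rho\eta^{1/(2\tau)}\)).

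The kernel-bias term \((r/\rho)^{4\tau}\) is obtained from \cref{prop:target-scale-mode} at \(\lambda=0\). For \(\lambda>0\), I apply a rank-one resolvent calculation to the constant or low mode, which gives bias \(\gtrsim A^2(\eta+(h/\rho)^{2\tau})^2\asymp A^2(r/\rho)^{4\tau}\).

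The variance lower bound \(\sigma^2/(Nr^d)\) follows from the low-frequency argument: on the \(\asymp r^{-d}\) modes with \(|k|\le c r^{-1}\), the smoother acts as an approximate identity, which is where \(r\le c_0\rho\) is needed so that these modes are not damped. This yields \(\gtrsim r^{-d}\) effective degrees of freedom, each contributing variance \(\gtrsim\sigma^2/N\).

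\textbf{Oracle law \eqref{eq:oracle-master}.} For the upper bound, I choose \(\eta\) to balance terms. Let \(r_\ell\) equate \((r/\ell)^{2\beta}\) with \(\sigma^2/(A^2Nr^d)\), giving \(r_\ell/\ell=\Iell^{-1/(2\beta+d)}\). Let \(r_\rho\) equate \((r/\rho)^{4\tau}\) with the variance, giving \(r_\rho/\rho=\Irho^{-1/(4\tau+d)}\). Setting \(r=\max\{h,\min(r_\ell,r_\rho)\}\), the three terms reduce to the four-term max. If the admissibility condition \(r\le\min\{\ell,c_0\rho\}\) fails, that max is \(\gtrsim1\) and the zero estimator is used.

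For the lower bound, the terms \((h/\ell)^{2\beta}\) and \(\Iell^{-2\beta/(2\beta+d)}\) follow directly from \cref{thm:minimax}, since the oracle risk dominates the minimax risk. The term \((h/\rho)^{4\tau}\) holds for every \(\lambda\) by the rank-one argument, which is monotone in \(\lambda\). The term \(\Irho^{-4\tau/(4\tau+d)}\) requires showing that every \(\lambda\) pays \(\max\{(r/\rho)^{4\tau},\sigma^2/(A^2Nr^d)\}\), including \(\lambda\) with \(r>c_0\rho\). In that regime the kernel bias on a constant mode is already \(\gtrsim A^2\), and the rank-one calculation gives this uniformly.

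\textbf{Main obstacle.} I expect the hardest step to be the scale-uniform bias upper bound when \(\rho\gg\ell\) or \(\rho\ll\ell\): making the source-norm growth and resolvent decay cancel exactly, uniformly in \(\rho\), together with the discrete-to-continuous comparison.
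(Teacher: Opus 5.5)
Your architecture is the paper's: a bias--variance split, truncation at frequency \(r^{-1}\) with \(p_r=\bar T_{\rho,\tau}v_r\), an effective-dimension variance bound, ridge shrinkage of a mode at frequency \(\asymp r^{-1}\), low-frequency transmission for the variance lower bound, balancing at \(r_\ell,r_\rho\) with the zero estimator as fallback, and \cref{thm:minimax} for the target terms. The gap is the step you flag yourself, and the mechanism you suggest for it does not work as stated. The recovery map is not Fourier-diagonal, \((C_X+\eta I)^{-1}\) does not preserve \(V_K\), and MZ norming compares \(C_X\) with \(J^*J\) only on \(V_K\). So neither a per-mode resolvent factor nor a band-restricted comparison controls \(p_r-Q_\eta(p_r|_{X_N})\). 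Moreover, the factor \(\eta(1+(\rho|k|)^2)^\tau\) vanishes at \(\eta=0\), so it cannot produce the \((h/\rho)^{2\tau}\) part of \((r/\rho)^{2\tau}\), which is the entire kernel term in interpolation.

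What is needed is a \emph{global one-sided} inequality. MZ on \(|k|\lesssim h^{-1}\) plus the trace estimate on the tail give \(\norm u_2^2\lesssim\delta_{h,\rho}\norm u_{\tau,\rho}^2+\norm{u|_{X_N}}_N^2\) for all \(u\). Equivalently, \(J^*J\lesssim\delta_{h,\rho}I+C_X\) in the Loewner order, so the design acts as an effective ridge of size \(\delta_{h,\rho}=[1+(\rho/h)^2]^{-\tau}\). Conjugating by \(D_\eta=\eta(C_X+\eta I)^{-1}\) gives \(D_\eta J^*JD_\eta\lesssim\max\{\delta_{h,\rho},\eta\}D_\eta\), hence \(\norm{JD_\eta J^*v_r}_2\lesssim\max\{\delta_{h,\rho},\eta\}\norm{v_r}_2\lesssim(r/\rho)^{2\tau}\norm{v_r}_2\). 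This single global factor is then multiplied by the supremum of \((1+\rho^2t)^\tau(1+\ell^2t)^{-\beta/2}\) over the band. Here \(\beta\leq2\tau\) makes that ratio monotone when \(\ell\leq\rho\). When \(\rho\leq\ell\), its maximum sits at an endpoint, and the value \(1\) at \(t=0\) is what leaves the kernel term. The same inequality, combined with \(\lambda_{\min}(G_X)\gtrsim\delta_{h,\rho}\), yields two further facts. First, the data stability \(\norm{Q_\eta z}_2\lesssim\norm z_N\), uniform in \(\eta\) and \(\rho\), which you need to propagate the tail. Second, \(J^*J\lesssim C_X\) on \(\operatorname{ran}C_X\), which reduces \(\norm{Q_\eta}_{\mathrm{HS}}^2\) to \(\sum_j(t_j/(t_j+\eta))^2\) before any mode counting. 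Counting Fourier modes alone does not bound the \(L^2\) variance of a non-diagonal smoother.

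A smaller gap is in the kernel-bias lower bound for \(\lambda>0\). With \(G_X=\mathbf 1\otimes_N\mathbf 1+B\), Sherman--Morrison only yields \(1-\theta_\eta\geq\eta/(1+\eta)\) from \(\zeta_\eta\leq\eta^{-1}\). Your bound \((\eta+(h/\rho)^{2\tau})^2\) does follow from the same formula, but only after proving \(B\succeq c[1+(\rho/h)^2]^{-\tau}I\) for the Gram operator with the \(k=0\) term removed. That requires the finite-exclusion frame \cref{lem:frame}; the bound on \(\lambda_{\min}(G_X)\) is not enough, since it contains that term. The paper's shortcut is that \(Q_\eta\mathbf 1\) lies in the span of kernel translates for every \(\eta\). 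Hence \cref{lem:low-mode-leakage-main} with \(\nu=0\) gives \(A^2[1+(\rho/h)^2]^{-2\tau}\) for all \(\lambda\) at once, and one takes the maximum with the Sherman--Morrison term; no monotonicity in \(\lambda\) is needed.

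Similarly, for \(r=h\) in the target-bias lower bound, use the null bump of \cref{lem:target-hole-main}, whose KRR output is zero for every \(\lambda\). A fixed mode at frequency \(\asymp h^{-1}\) need not be poorly recovered on a non-lattice design. With these ingredients supplied, your balancing and case analysis for \eqref{eq:oracle-master} go through as in the paper.
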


The fixed-ridge equivalence concerns the range in which the effective
resolution \(r\) remains below both relevant lengthscales.  The cap in
\eqref{eq:oracle-risk} covers the complementary oracle regimes.

For discussion and proofs, we write geometric terms as \(\mathsf G\) and
statistical terms as \(\mathsf S\).
The following bounded forms apply whether or not the corresponding design and
information thresholds have been crossed:
\begin{equation}\label{eq:four-terms}
 \begin{aligned}
  \Gell&:=[1+(\ell/h)^2]^{-\beta},
  &\qquad \Grho&:=[1+(\rho/h)^2]^{-2\tau},\\
  \Sell&:=(1+\Iell)^{-2\beta/(2\beta+d)},
  &\qquad \Srho&:=(1+\Irho)^{-4\tau/(4\tau+d)},
 \end{aligned}
\end{equation}
Denote their maximum by
\begin{equation}\label{eq:oracle-envelope}
 \Phior:=\max\{\Gell,\Grho,\Sell,\Srho\}.
\end{equation}
Thus \cref{thm:oracle} is equivalently
\(\Risk_\rho^{\mathrm{or}}\asymp A^2\Phior\).  As shown in
\cref{thm:minimax}, the target terms \(\Gell\) and \(\Sell\) are unavoidable
for every estimator; \(\Grho\) and
\(\Srho\) are the additional terms specific to scalar-ridge Mat\'ern
regression with kernel lengthscale \(\rho\).  The two pairs are defined
independently:
\(\ell\) does not enter the kernel terms, and \(\rho\) does not enter the
target terms.  The lengthscales interact only through which pair dominates
and, when \(\rho\geq\ell\), through the condition \(\beta\leq2\tau\).

\medskip
\begin{center}
\small
\centering
\begin{tabular}{@{}lll@{}}
\toprule
Term & Meaning & Small-term form \\
\midrule
\(\Gell\) & target scale unresolved by the design
  & \((h/\ell)^{2\beta}\) \\
\(\Grho\) & kernel scale unresolved by the design
  & \((h/\rho)^{4\tau}\) \\
\(\Sell\) & insufficient target-scale information
  & \(\Iell^{-2\beta/(2\beta+d)}\) \\
\(\Srho\) & insufficient kernel-scale information
  & \(\Irho^{-4\tau/(4\tau+d)}\) \\
\bottomrule
\end{tabular}
\end{center}
\medskip

\Cref{fig:oracle-phase-diagram} shows the dominant term on the numerical grid
of \cref{sec:numerics}.
\begin{figure}[!h]
\centering
\includegraphics[width=\textwidth,trim=0 12bp 0 12bp,clip]{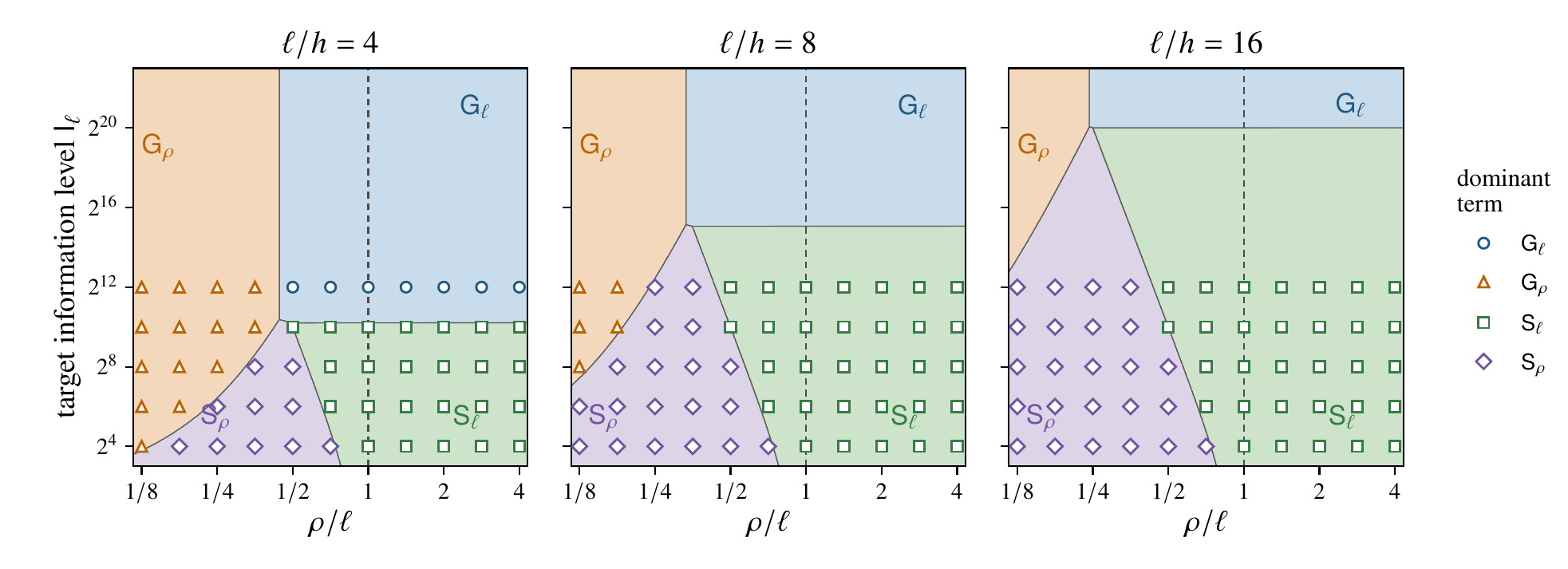}
\caption{Four-term phase diagram for \(d=1\) and \(\beta=\tau=2\).  Colors
identify the largest bounded term in \eqref{eq:four-terms}; markers give the
same theoretical classification on the numerical grid, not an empirical risk
decomposition.  The dashed line marks \(\rho=\ell\).}
\label{fig:oracle-phase-diagram}
\end{figure}

\paragraph{Origin of the four terms}
When all four terms are small, the upper bound is constructive.  Recall that
the normalized ridge parameter and effective resolution are
\begin{equation}\label{eq:eta-r}
 \eta:=\lambda/\rho^d,
 \qquad
 r(\eta):=\max\{h,\rho\eta^{1/(2\tau)}\},
\end{equation}
so increasing \(r\) corresponds to stronger regularization.  The
fixed-ridge equivalence \eqref{eq:fixed-ridge-law} identifies its three
components: target bias, kernel bias, and variance.  Because both bias terms
increase with \(r\), the constraint \(r\geq h\) prevents them from falling
below the geometric terms \(\Gell\) and \(\Grho\).  Balancing variance with
the target and kernel biases gives, respectively,
\begin{equation}\label{eq:balance-scales}
 r_\ell:=\ell\Iell^{-1/(2\beta+d)},
 \qquad
 r_\rho:=\rho\Irho^{-1/(4\tau+d)},
 \qquad
 r_\star:=\max\{h,\min(r_\ell,r_\rho)\}.
\end{equation}
At \(r=r_\ell\), target bias and variance have the common order
\(A^2\Sell\); at \(r=r_\rho\), kernel bias and variance have the common order
\(A^2\Srho\).  As \(r\) increases, variance decreases while both biases
increase, so the smaller of \(r_\ell\) and \(r_\rho\) is the first balance
reached and gives the unconstrained optimal resolution.  The design cannot
support a resolution finer than \(h\), which replaces this choice by
\(r_\star=\max\{h,\min(r_\ell,r_\rho)\}\) and produces the geometric terms
\(\Gell\) and \(\Grho\).  When \(\rho\geq\ell\), the source--resolvent
cancellation in \eqref{eq:resolvent-cancellation} makes kernel bias no larger
than target bias.

\paragraph{Constructive oracle tuning}
The oracle risk in \cref{thm:oracle} is attained within constant factors by
the following deterministic choice of \(\lambda\).  When \(\sigma=0\), set
\(r_\ell=r_\rho=0\), so \(r_\star=h\).  When all four terms in
\eqref{eq:four-terms} are sufficiently small, choose
\begin{equation}\label{eq:oracle-tuning}
 \eta_\star=
 \begin{cases}
 0,&r_\star=h,\\
 (r_\star/\rho)^{2\tau},&r_\star>h,
 \end{cases}
 \qquad
 \lambda_\star=\rho^d\eta_\star.
\end{equation}
Otherwise, use the zero estimator.  The precise threshold conditions are
verified in \cref{sec:proof-oracle}.
When \(\sigma=0\), this construction gives \(r_\star=h\) and
\(\eta_\star=0\), so when \(h\leq\min\{\ell,\rho\}\) the oracle and
interpolation bounds agree.

\begin{corollary}[Minimax comparison]\label{cor:minimax-comparison}
Under the assumptions of \cref{thm:minimax,thm:oracle},
\begin{equation}\label{eq:minimax-comparison}
 \frac{\Risk_\rho^{\mathrm{or}}}{\Risk^\star}
 \asymp
 1\vee
 \frac{\Grho\vee\Srho}{\Gell\vee\Sell}.
\end{equation}
Consequently, scalar-ridge Mat\'ern KRR has minimax order if and only if
\(\Grho\vee\Srho\lesssim\Gell\vee\Sell\).  In the noiseless resolved
regime of \cref{thm:interpolation}, the Mat\'ern interpolant has minimax order
if and only if \(\Grho\lesssim\Gell\).
In particular, \(\rho\geq\ell\) guarantees both conclusions.
\end{corollary}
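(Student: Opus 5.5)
The corollary is a ratio computation built on \cref{thm:minimax} and \cref{thm:oracle}, both taken as given.

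\textbf{Step 1: write both risks in bounded form.} By \cref{thm:oracle}, \(\Risk_\rho^{\mathrm{or}}\asymp A^2\Phior\). I first check that \cref{thm:minimax} gives \(\Risk^\star\asymp A^2(\Gell\vee\Sell)\).
\begin{itemize}
\item For \(t>0\) and \(p>0\), the identity \(1\wedge t^{-p}\asymp(1+t)^{-p}\) holds with constants depending only on \(p\).
\item Taking \(t=\ell/h\) and \(p=2\beta\) gives \(1\wedge(h/\ell)^{2\beta}\asymp\Gell\).
\item Taking \(t=\Iell\) and \(p=2\beta/(2\beta+d)\) gives \(1\wedge\Iell^{-2\beta/(2\beta+d)}\asymp\Sell\).
\item Since \(1\wedge\max\{a,b\}=\max\{1\wedge a,1\wedge b\}\), the bracket in \eqref{eq:minimax-law} is \(\asymp\Gell\vee\Sell\).
\end{itemize}
The same manipulation confirms \(\Phior\asymp\max\{\Gell,\Grho,\Sell,\Srho\}\), using that all four bounded terms are at most one.

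\textbf{Step 2: take the ratio.} Write \(T=\Gell\vee\Sell>0\) and \(K=\Grho\vee\Srho\). Then
\[
\frac{\Risk_\rho^{\mathrm{or}}}{\Risk^\star}\asymp\frac{T\vee K}{T}=1\vee\frac{K}{T},
\]
which is \eqref{eq:minimax-comparison}. The implied constants depend only on \(d,\beta,\tau,\bar\gamma\), since they are products of the constants in the two theorems. The "if and only if" statement follows at once: the ratio is \(\asymp1\) exactly when \(K\lesssim T\).

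\textbf{Step 3: the interpolation case.} In the noiseless resolved regime \(\sigma=0\) and \(h\le\min\{\ell,\rho\}\), we have \(\Sell=\Srho=0\).
\begin{itemize}
\item \Cref{thm:interpolation} gives the interpolation error \(\asymp A^2\max\{(h/\ell)^{2\beta},(h/\rho)^{4\tau}\}\).
\item Because \(h\le\ell\) and \(h\le\rho\), this is \(\asymp A^2(\Gell\vee\Grho)\).
\item \Cref{thm:minimax} gives \(\Risk^\star\asymp A^2\Gell\).
\end{itemize}
The same ratio argument then yields minimax order if and only if \(\Grho\lesssim\Gell\).

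\textbf{Step 4: the case \(\rho\ge\ell\).} This is the only step with content beyond bookkeeping. Its purpose is to show \(K\lesssim T\).
\begin{itemize}
\item \emph{Statistical terms.} Since \(\rho\ge\ell\) we have \(\Irho\ge\Iell\), so \(\Srho=(1+\Irho)^{-4\tau/(4\tau+d)}\le(1+\Iell)^{-4\tau/(4\tau+d)}\). Because \(\beta\le2\tau\), the function \(p\mapsto p/(p+d)\) is increasing, so \(4\tau/(4\tau+d)\ge2\beta/(2\beta+d)\). As \(1+\Iell\ge1\), a larger exponent gives a smaller value, hence \(\Srho\le\Sell\).
\item \emph{Geometric terms.} Let \(u=(1+(\rho/h)^2)^{-1}\) and \(v=(1+(\ell/h)^2)^{-1}\). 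Then \(\rho\ge\ell\) gives \(u\le v\le1\), and \(2\tau\ge\beta\) gives \(\Grho=u^{2\tau}\le v^{\beta}=\Gell\).
\end{itemize}
So \(K\le T\), and both conclusions hold: the KRR statement by Step 2, and the interpolation statement by Step 3, since \(\Grho\le\Gell\).
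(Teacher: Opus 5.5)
Your proposal is correct and follows the same route as the paper. Like the paper, you rewrite \cref{thm:minimax} in the bounded form \(\Risk^\star\asymp A^2(\Gell\vee\Sell)\), divide it into \(\Risk_\rho^{\mathrm{or}}\asymp A^2\Phior\), handle the interpolation case identically at \(\sigma=0\), and obtain the case \(\rho\geq\ell\) from the domination \(\Grho\leq\Gell\), \(\Srho\leq\Sell\) implied by \(\beta\leq2\tau\); your Step 4 simply writes out the monotonicity argument behind that domination, which the paper states without detail.
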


\begin{proof}
Using the bounded terms in \eqref{eq:four-terms}, \cref{thm:minimax} can be
written as
\(\Risk^\star\asymp A^2(\Gell\vee\Sell)\).  Dividing this identity into
the corresponding form of \cref{thm:oracle} gives
\eqref{eq:minimax-comparison}.  If
\(\rho\geq\ell\), then \(\beta\leq2\tau\) gives
\begin{equation}\label{eq:overscaled-domination}
 \Grho\leq\Gell,
 \qquad
 \Srho\leq\Sell.
\end{equation}
The interpolation statement follows in the same way from
\cref{thm:minimax,thm:interpolation} with \(\sigma=0\).
\end{proof}

\section{Proofs of the main results}\label{sec:proofs}

The main ingredients for each proof are summarized below.
\medskip
\begin{center}
\small
\setlength{\tabcolsep}{5pt}
\begin{tabular}{@{}p{0.14\textwidth}p{0.39\textwidth}p{0.39\textwidth}@{}}
\toprule
Result & Upper-bound argument & Lower-bound arguments \\
\midrule
Minimax & spectral least squares & null function; Assouad \\
Interpolation & source--resolvent cancellation & null function; low-mode leakage \\
KRR oracle & resolved Mat\'ern bias; variance upper bound
  & minimax; target- and kernel-scale bias lower bounds; variance lower bound \\
\bottomrule
\end{tabular}
\end{center}
\medskip
\Cref{sec:proof-prelim} collects the preliminary estimates;
\cref{sec:proof-minimax,sec:proof-interpolation,sec:proof-oracle} then prove
the minimax, interpolation, and KRR oracle results, respectively.

Throughout this section, the design conditions \(h\leq h_0\) and
\(\gamma\leq\bar\gamma\), and the scale bounds \(0<\ell,\rho\leq1\), are
imposed whenever the corresponding quantities occur.  All Sobolev indices
used in sampling estimates exceed \(d/2\).

\subsection{Preliminary estimates}\label{sec:proof-prelim}

We first fix the normalized kernel and operator notation used in the estimates
below.  Write
\(
 \bar\mu_{\rho,k}:=\rho^{-d}\mu_{\rho,k}
 =(1+(2\pi\rho|k|)^2)^{-\tau}
\).  Let \(\bar K_{\rho,\tau}\) be the kernel with Fourier coefficients
\(\bar\mu_{\rho,k}\), and let \(\bar T_{\rho,\tau}\) be its integral operator.
We use the notation
\[
 \norm u_{s,\ell_0}^2:=
 \sum_{k\in\Z^d}(1+(2\pi\ell_0|k|)^2)^s|u_k^{\ft}|^2,
 \qquad s\geq0,\quad \ell_0>0.
\]
This extends the target norm in \eqref{eq:target-class}; the normalized
Mat\'ern RKHS norm is \(\norm{\cdot}_{\tau,\rho}\).  Let
\(\bar\cH_{\rho,\tau}\) be the RKHS of \(\bar K_{\rho,\tau}\), let
\(J:\bar\cH_{\rho,\tau}\to L^2(\T^d)\) be inclusion, and equip
\(E_N:=\mathbb C^N\) with
\[
 \ip{z}{\zeta}_N
 :=N^{-1}\sum_{i=1}^Nz_i\overline{\zeta_i},
 \qquad \norm z_N^2:=\ip{z}{z}_N.
\]
Define \(S:\bar\cH_{\rho,\tau}\to E_N\) by \((Su)_i=u(x_i)\), and put
\(C_X:=S^*S\).  Let \(\bar{\boldsymbol K}_{\rho,X}\) be the sampled matrix
of \(\bar K_{\rho,\tau}\).  The normalized data-space Gram operator and
recovery map are
\[
 G_X:=SS^*=\bar{\boldsymbol K}_{\rho,X}/N,
 \qquad
 Q_\eta:=J(C_X+\eta I)^{-1}S^*,
 \qquad \eta=\lambda/\rho^d,
\]
with the Moore--Penrose interpretation at \(\eta=0\).  Thus
\(\widehat f_{\lambda,\rho}=Q_\eta(f|_{X_N}+\xi)\).  In particular, for
noiseless data, \(Q_0(f|_{X_N})=\mathcal I_{\rho,X_N}f\).  The covariance operator
of \(\xi\) in \(E_N\) is \((\sigma^2/N)I\); the factor \(N^{-1}\) comes
from the normalized inner product on \(E_N\).  Here \(C_X\) acts on the
RKHS and \(G_X\) on data space, and they have the same nonzero eigenvalues.
For \(K\geq1\), let
\[
 V_K:=\operatorname{span}\{e_k:\norm k_\infty\leq K\}.
\]
Set
\begin{equation}\label{eq:delta-main}
 \delta_{h,\rho}:=[1+(\rho/h)^2]^{-\tau}.
\end{equation}

\begin{lemma}[Quasi-uniform sampling estimates]
\label{lem:sampling-main}
Under the standing assumptions, there is \(c_{\mathrm{MZ}}>0\)
such that the low-frequency
Marcinkiewicz--Zygmund (MZ) inequality holds:
\begin{equation}\label{eq:mz}
 \norm{p|_{X_N}}_N^2\asymp\norm p_2^2,
 \qquad p\in V_K,\quad Kh\leq c_{\mathrm{MZ}}.
\end{equation}
For each \(s\in\{\beta,\tau\}\), general Sobolev functions satisfy the
trace estimate
\begin{equation}\label{eq:trace}
 \norm{g|_{X_N}}_N^2
 \lesssim \norm g_2^2+h^{2s}\norm g_{H^s}^2,
 \qquad g\in H^s(\T^d).
\end{equation}
\end{lemma}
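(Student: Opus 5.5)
The two estimates rest on the geometry of quasi-uniform designs. We use the separation radius \(q\geq h/\bar\gamma\) and a Voronoi-type partition of \(\T^d\) into cells \(\Omega_i\ni x_i\). Each cell satisfies \(B(x_i,q)\subset\Omega_i\subset B(x_i,h)\), so \(|\Omega_i|\asymp h^d\asymp N^{-1}\).

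\textbf{MZ inequality.} For \(p\in V_K\), compare \(N^{-1}\sum_i|p(x_i)|^2\) with \(\sum_i\int_{\Omega_i}|p|^2=\norm p_2^2\). Since \(N^{-1}\asymp|\Omega_i|\) with constants depending only on \(d,\bar\gamma\), it suffices to bound the discrepancy
\[
 \Bigl|\sum_i|\Omega_i|\,|p(x_i)|^2-\norm p_2^2\Bigr|
 \leq\sum_i\int_{\Omega_i}\bigl||p(x_i)|^2-|p(x)|^2\bigr|\dd x .
\]
First rewrite the integrand with \(||a|^2-|b|^2|\leq|a-b|(|a|+|b|)\). Then apply Cauchy--Schwarz and a local oscillation bound, \(\sup_{x\in\Omega_i}|p(x)-p(x_i)|\leq h\sup_{B(x_i,h)}|\nabla p|\). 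The local suprema are controlled by a Bernstein/Plancherel--Pólya argument for trigonometric polynomials of degree \(K\): summing \(\sup_{B(x_i,h)}|\nabla p|^2\,h^d\) over \(i\), with bounded overlap of the enlarged balls, is bounded by \(C\norm{\nabla p}_2^2\lesssim K^2\norm p_2^2\) when \(Kh\lesssim1\). The discrepancy is therefore \(\lesssim (Kh)\norm p_2^2\).

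For \(Kh\leq c_{\mathrm{MZ}}\) small, this gives the weighted sum \(\asymp\norm p_2^2\). The unweighted normalized sum follows because the weights \(N|\Omega_i|\) are bounded above and below. The upper bound \(\norm{p|_{X_N}}_N^2\lesssim\norm p_2^2\) holds even without smallness of \(Kh\), by the Plancherel--Pólya bound on \(q\)-separated points. The lower bound is the delicate direction and uses the smallness \(Kh\leq c_{\mathrm{MZ}}\) together with \(N|\Omega_i|\gtrsim1\).

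\textbf{Trace estimate.} Fix \(s>d/2\), so \(s\in\{\beta,\tau\}\) both qualify. On a ball \(B_i=B(x_i,q)\), use the local Sobolev embedding at unit scale rescaled to radius \(q\asymp h\):
\[
 |g(x_i)|^2\lesssim q^{-d}\Bigl(\norm g_{L^2(B_i)}^2+q^{2s}|g|_{H^s(B_i)}^2\Bigr),
\]
with a Slobodeckij or integer seminorm; lower-order intermediate seminorms are absorbed by interpolation on the ball. Multiply by \(N^{-1}\asymp q^d\) and sum over \(i\). Because the balls \(B_i\) are disjoint, the local pieces sum to global ones. For fractional \(s\), the localized Slobodeckij seminorms satisfy \(\sum_i|g|_{H^s(B_i)}^2\leq|g|_{H^s(\T^d)}^2\), since the double integrals are over disjoint \(B_i\times B_i\). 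This yields \(\norm{g|_{X_N}}_N^2\lesssim\norm g_2^2+h^{2s}\norm g_{H^s}^2\).

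An alternative route is a Littlewood–Paley split. Write \(g=P_Kg+(I-P_K)g\) with \(K\asymp h^{-1}\), and apply the MZ upper bound to \(P_Kg\). For the tail, use dyadic blocks, each with MZ upper bound at its own degree \(2^j\) on \(q\)-separated points with factor \(\lesssim(2^jh)^d\), and then sum using \(s>d/2\).

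\textbf{Main obstacle.} The main obstacle is the MZ lower bound uniformly in the design. The local-oscillation argument must be carried out so that the constant depends only on \(d,\bar\gamma\). The first approach is the Voronoi comparison above with Bernstein's inequality. If the local sup control is awkward, the fallback is the standard Gröchenig/Filbir–Mhaskar MZ theorem for trigonometric polynomials on a torus with fill distance \(h\).
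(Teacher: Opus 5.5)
Your proposal is correct and follows essentially the paper's route. For the MZ bound you use Voronoi cells \(B(x_i,q)\subset V_i\subset B(x_i,h)\), a local oscillation estimate summed with bounded overlap, and Bernstein's inequality. For the trace estimate you use a rescaled local Sobolev embedding on disjoint separation balls, summed to global (Slobodeckij) norms.

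The differences are minor. The paper phrases the MZ step as \(\norm{p-\mathcal E_Xp}_2\lesssim Kh\norm p_2\) for the piecewise-constant Voronoi extension, followed by the triangle inequality; this removes any need for your a priori upper sampling bound. The paper also absorbs the intermediate seminorms globally via \(q^{2j}|k|^{2j}\lesssim1+q^{2s}|k|^{2s}\), rather than by local norm equivalence on the ball.

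One small correction: the Plancherel--P\'olya upper bound on \(q\)-separated points carries a factor of order \((1+Kh)^d\). It is therefore uniform only for \(Kh\lesssim1\), not for arbitrary \(K\) as your side remark suggests. This is harmless here, since the lemma assumes \(Kh\leq c_{\mathrm{MZ}}\).
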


\begin{lemma}[Reverse sampling and Gram stability]
\label{lem:reverse-gram-main}
Under the standing assumptions, every \(u\in H^\tau(\T^d)\) satisfies
\begin{equation}\label{eq:reverse-sampling-main}
 \norm u_2^2\lesssim
 \delta_{h,\rho}\norm u_{\tau,\rho}^2
 +\norm{u|_{X_N}}_N^2,
\end{equation}
and the normalized Gram matrix \(G_X\) is invertible and satisfies
\begin{equation}\label{eq:gram-main}
 \lambda_{\min}(G_X)\gtrsim\delta_{h,\rho}.
\end{equation}
\end{lemma}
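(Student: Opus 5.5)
We prove \eqref{eq:reverse-sampling-main} first and then deduce \eqref{eq:gram-main} from it by duality.

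\textbf{Reverse sampling.} Fix \(u\in H^\tau(\T^d)\) and a cutoff \(K\asymp c_{\mathrm{MZ}}h^{-1}\) small enough that \eqref{eq:mz} holds on \(V_K\). Split \(u=p+g\), where \(p\) is the Fourier truncation of \(u\) to \(V_K\) and \(g=u-p\) contains only frequencies with \(|k|\gtrsim h^{-1}\).

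\emph{High part.} For \(|k|\gtrsim h^{-1}\) we have \((1+(2\pi\rho|k|)^2)^{-\tau}\lesssim[1+(\rho/h)^2]^{-\tau}=\delta_{h,\rho}\), with constants uniform in \(\rho\le1\). Hence
\[
 \norm g_2^2\lesssim\delta_{h,\rho}\norm g_{\tau,\rho}^2\le\delta_{h,\rho}\norm u_{\tau,\rho}^2 .
\]

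\emph{Low part.} By \eqref{eq:mz}, \(\norm p_2^2\lesssim\norm{p|_{X_N}}_N^2\lesssim\norm{u|_{X_N}}_N^2+\norm{g|_{X_N}}_N^2\). So it remains to show
\[
 \norm{g|_{X_N}}_N^2\lesssim\delta_{h,\rho}\norm g_{\tau,\rho}^2
\]
for functions \(g\) supported on high frequencies.

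The trace estimate \eqref{eq:trace} with \(s=\tau\) gives \(\norm{g|_{X_N}}_N^2\lesssim\norm g_2^2+h^{2\tau}\norm g_{H^\tau}^2\). On frequencies \(|k|\gtrsim h^{-1}\), we have \(h^{2\tau}(1+|k|^2)^\tau\lesssim (h|k|)^{2\tau}\). The required comparison is
\[
 (h|k|)^{2\tau}\lesssim\delta_{h,\rho}(1+(\rho|k|)^2)^\tau .
\]
It holds when \(\rho\ge h\), since then the right side is \(\gtrsim (h/\rho)^{2\tau}(\rho|k|)^{2\tau}\). It also holds when \(\rho<h\), since then \(\delta_{h,\rho}\asymp1\).

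The main obstacle is that \eqref{eq:trace} is stated for the standard \(H^\tau\) norm. I expect the scaling above to absorb this, because only high frequencies are involved. If \(h_0\) constants cause trouble, I would instead apply \eqref{eq:trace} after rescaling, or use a dyadic decomposition of \(g\) with Bernstein-type sampling bounds on each dyadic block.

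\textbf{Gram bound.} For \(z\in E_N\), set \(u=S^*z\). Its \(\bar\cH_{\rho,\tau}\) norm satisfies \(\norm u_{\tau,\rho}^2=\ip{G_Xz}{z}_N\). Choose a bump \(b\in V_K\), with \(K\asymp h^{-1}\), whose samples reproduce \(z\). Concretely, interpolate \(z\) by a sum of well-separated bumps localized at scale \(q\asymp h\), so that \(b|_{X_N}=z\) and \(\norm b_{\tau,\rho}^2\lesssim\delta_{h,\rho}^{-1}\norm z_N^2\). The last estimate follows from separation together with the frequency scale \(h^{-1}\), since \(\norm b_{\tau,\rho}^2\lesssim(1+(\rho/h)^2)^\tau\norm b_2^2\).

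Then
\[
 \norm z_N^2=\ip{b|_{X_N}}{z}_N=\ip{b}{S^*z}_{\bar\cH}\le\norm b_{\tau,\rho}\norm u_{\tau,\rho},
\]
which gives \(\ip{G_Xz}{z}_N\gtrsim\delta_{h,\rho}\norm z_N^2\). In particular \(G_X\) is invertible.

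The bump \(b\) must be smooth and compactly supported at scale \(q\), so it is not band-limited. Its Fourier tail still costs only \(\norm b_{\tau,\rho}^2\lesssim(1+(\rho/q)^2)^\tau\norm b_2^2\), using \(H^\tau\)-scaling of a fixed bump, and \(\norm b_2^2\asymp q^d\norm z_N^2N\asymp\norm z_N^2\). With \(q\asymp h\), this yields the claimed bound.
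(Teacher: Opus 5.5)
Your reverse-sampling argument matches the paper's. You split at \(K\asymp c_{\mathrm{MZ}}/h\), apply the lower MZ bound to the low part, and bound the sampled tail by \eqref{eq:trace} with \(s=\tau\), using \(h^{2\tau}(1+(2\pi|k|)^2)^\tau\lesssim(h/\rho)^{2\tau}(1+(2\pi\rho|k|)^2)^\tau\) for \(|k|\gtrsim h^{-1}\). That comparison already settles your worry about the standard \(H^\tau\) norm, so you need no rescaling or dyadic decomposition. However, your claim that the comparison ``also holds when \(\rho<h\)'' is false. For \(\rho\ll h\) and \(|k|\asymp\rho^{-1}\), the left side is of order \((h/\rho)^{2\tau}\) while the right side is \(O(1)\). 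The intermediate bound \(\norm{g|_{X_N}}_N^2\lesssim\delta_{h,\rho}\norm g_{\tau,\rho}^2\) genuinely fails in that regime. Take \(g\) to be the high-frequency part of an \(L^2\)-normalized bump of width \(\rho\) centered at a design point: then \(\norm g_{\tau,\rho}\lesssim1\) but \(\norm{g|_{X_N}}_N^2\gtrsim(h/\rho)^d\). Use the paper's case split instead. If \(\rho\le h\), then \(\delta_{h,\rho}\geq2^{-\tau}\), and \(\norm u_2^2\le\norm u_{\tau,\rho}^2\le2^\tau\delta_{h,\rho}\norm u_{\tau,\rho}^2\) with no sampling at all.

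Your Gram bound takes a genuinely different route. Contrary to your opening sentence, it does not use \eqref{eq:reverse-sampling-main}. The paper bounds the quadratic form from below directly. It periodizes \(\phi=\chi*\widetilde\chi\), which has compact support and \(\phi^{\ft}\geq0\), at scale \(s=q\wedge\rho\). Separation makes the sampled matrix of this bump exactly \(\phi(0)I\), so Poisson summation gives the identity \(\sum_k\phi^{\ft}(sk)|\mathcal A_k|^2=s^{-d}\phi(0)\norm\alpha_{\ell^2}^2\), and the pointwise comparison \(\bar\mu_{\rho,k}\gtrsim(s/\rho)^{2\tau}\phi^{\ft}(sk)\) finishes. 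You instead use duality with an interpolant, \(\norm z_N^2=\ip{b}{S^*z}_{\bar\cH_{\rho,\tau}}\leq\norm b_{\tau,\rho}\ip{G_Xz}{z}_N^{1/2}\), which is valid. But you only justify the norm of one scaled bump, whereas you need it for the whole sum \(b=\sum_iz_i\phi_q(\cdot-x_i)\), with \(\phi_q\) the periodization of \(\phi(\cdot/q)\). For integer \(\tau\) this follows from locality of derivatives; for general \(\tau>d/2\) you need an almost-orthogonality estimate. One way to get it: write \(b_k^{\ft}=q^d\phi^{\ft}(qk)\mathcal A_k\) and use \(1+(2\pi\rho|k|)^2\leq(1+(\rho/q)^2)(1+(2\pi q|k|)^2)\). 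Then prove \(\sum_kW(qk)|\mathcal A_k|^2\lesssim q^{-d}\norm z_{\ell^2}^2\) for the Schwartz weight \(W(\xi)=(1+(2\pi|\xi|)^2)^\tau|\phi^{\ft}(\xi)|^2\), by Poisson summation and a Schur test over the separated sites. This is the machinery of \cref{lem:frame}, used as an upper rather than a lower bound. It yields \(\norm b_{\tau,\rho}^2\lesssim(1+(\rho/q)^2)^\tau Nq^d\norm z_N^2\lesssim\delta_{h,\rho}^{-1}\norm z_N^2\). Also drop ``\(b\in V_K\)''; as you yourself note, the bumps are not band-limited. As for the trade-off, the paper's positive-definite compactly supported bump makes the sampled form exactly diagonal and needs no off-diagonal control. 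Your duality principle (bounded minimum-norm interpolation cost is equivalent to a lower Gram bound) is more transparent and portable to other RKHS norms, at the cost of this extra localization estimate.
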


\begin{corollary}[Recovery stability and source resolvent]
\label{cor:recovery-stability-main}
Under the hypotheses of \cref{lem:reverse-gram-main}, uniformly in
\(\eta\geq0\),
\begin{equation}\label{eq:data-stability-main}
 \norm{Q_\eta z}_2\lesssim\norm z_N,
 \qquad z\in E_N.
\end{equation}
If \(v\in L^2(\T^d)\) and \(p=\bar T_{\rho,\tau}v\), then
\begin{equation}\label{eq:source-resolvent-main}
 \bignorm{p-Q_\eta(p|_{X_N})}_2
 \lesssim \max\{\delta_{h,\rho},\eta\}\norm v_2.
\end{equation}
If \(h\leq\rho\) and
\(r=\max\{h,\rho\eta^{1/(2\tau)}\}\), then
\[
 \bignorm{p-Q_\eta(p|_{X_N})}_2
 \lesssim (r/\rho)^{2\tau}\norm v_2.
\]
\end{corollary}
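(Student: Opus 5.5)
We prove the three claims of \cref{cor:recovery-stability-main} in order, working in the normalized RKHS \(\bar\cH_{\rho,\tau}\) with its norm \(\norm{\cdot}_{\tau,\rho}\). Write \(u=Q_\eta z=J(C_X+\eta I)^{-1}S^*z\).

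\emph{Data stability.} We use \cref{lem:reverse-gram-main}. Through the singular value decomposition of \(S\), the operator \((C_X+\eta I)^{-1}S^*\) equals \(S^*(G_X+\eta I)^{-1}\), read as the pseudoinverse at \(\eta=0\); the lemma gives invertibility of \(G_X\). Put \(w=(G_X+\eta I)^{-1}z\), so that \(u=S^*w\). Then
\[
 \norm{u}_{\tau,\rho}^2=\ip{G_Xw}{w}_N ,
 \qquad
 \norm{u|_{X_N}}_N=\norm{G_Xw}_N .
\]
Diagonalize \(G_X\) with eigenvalues \(g_j\geq\lambda_{\min}(G_X)\gtrsim\delta_{h,\rho}\). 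The coefficients satisfy
\[
 \frac{g_j}{g_j+\eta}\leq1
 \quad\text{for } u|_{X_N},
 \qquad
 \frac{g_j}{(g_j+\eta)^2}\leq\frac1{g_j}\lesssim\delta_{h,\rho}^{-1}
 \quad\text{for } \norm{u}_{\tau,\rho}^2 .
\]
Hence \(\norm{u|_{X_N}}_N\leq\norm z_N\) and \(\delta_{h,\rho}\norm u_{\tau,\rho}^2\lesssim\norm z_N^2\). Inserting these into \eqref{eq:reverse-sampling-main} gives \(\norm u_2^2\lesssim\norm z_N^2\), uniformly in \(\eta\geq0\).

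\emph{Source resolvent.} Let \(p=\bar T_{\rho,\tau}v\). Since \(\bar T_{\rho,\tau}=JJ^*\), we have \(p=Jp\) with \(p=J^*v\) viewed in the RKHS, and \(\norm p_{\tau,\rho}\leq\norm v_2\). The error is
\[
 p-Q_\eta(p|_{X_N})=J\bigl[I-(C_X+\eta)^{-1}C_X\bigr]p=\eta J(C_X+\eta)^{-1}p=:e .
\]
The key step is a duality argument, where the source structure supplies the extra factor. Compute
\[
 \norm e_2^2=\ip{Je}{JJ^*v}\cdot(\dots),
 \qquad\text{more precisely}\qquad
 \norm{e}_2=\sup_{\norm{\phi}_2\le1}\bigl|\ip{\eta(C_X+\eta)^{-1}J^*v}{J^*\phi}_{\bar\cH}\bigr| .
\]
Write \(R=\eta(C_X+\eta)^{-1}\). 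It is cleaner to bound \(\norm{JR^{1/2}}\) and then use \(\norm{Je}_2\le\norm{JR^{1/2}}\,\norm{R^{1/2}J^*v}_{\bar\cH}\le\norm{JR^{1/2}}^2\norm v_2\), using that \(JRJ^*\) is symmetric. It therefore suffices to show \(\norm{JR^{1/2}}^2\lesssim\max\{\delta_{h,\rho},\eta\}\), that is, \(\norm{Jg}_2^2\lesssim\max\{\delta,\eta\}\,\ip{Rg}{g}\) after substituting \(g=R^{1/2}g'\), equivalently \(\norm{Jg'}_2^2\lesssim\max\{\delta,\eta\}\norm{g'}^2\) on the range. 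Directly: for any \(g\), apply \eqref{eq:reverse-sampling-main} to \(R^{1/2}g\). We have \(\norm{R^{1/2}g}_{\tau,\rho}\le\norm g\), and
\[
 \norm{(R^{1/2}g)|_{X_N}}_N^2=\ip{C_XR^{1/2}g}{R^{1/2}g}=\ip{C_XR\,g}{g}\le\eta\norm g^2 ,
\]
since \(\eta x/(x+\eta)\le\eta\). Thus \(\norm{JR^{1/2}g}_2^2\lesssim(\delta_{h,\rho}+\eta)\norm g^2\), which gives \eqref{eq:source-resolvent-main}.

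\emph{Resolved form.} Assume \(h\le\rho\). Then \(\delta_{h,\rho}\le(h/\rho)^{2\tau}\le(r/\rho)^{2\tau}\), and \(\eta=(\rho\eta^{1/(2\tau)}/\rho)^{2\tau}\le(r/\rho)^{2\tau}\). The last claim follows.

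The main obstacle is the middle step: obtaining the squared factor from the source condition instead of only \(\max\{\delta,\eta\}^{1/2}\). The factorization \(e=JRJ^*v\) with \(\norm{JRJ^*}=\norm{JR^{1/2}}^2\) is what I would rely on; the reverse-sampling inequality then handles \(R^{1/2}\) cleanly.
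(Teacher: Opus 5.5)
Your proof is correct and follows essentially the paper's route. Data stability comes from the Gram lower bound plus reverse sampling: your $G_X$-eigenbasis computation is exactly the paper's spectral bound $\sup_{t\gtrsim\delta_{h,\rho}}t(\delta_{h,\rho}+t)/(t+\eta)^2\lesssim1$, written in data space. The source bound comes from $p-Q_\eta(p|_{X_N})=J\eta(C_X+\eta I)^{-1}J^*v$. There your identity $\norm{JRJ^*}=\norm{JR^{1/2}}^2$, with reverse sampling applied to $R^{1/2}g$, is an equivalent packaging of the paper's Loewner estimate $D_\eta J^*JD_\eta\lesssim\max\{\delta_{h,\rho},\eta\}D_\eta$ followed by Cauchy--Schwarz. (Your intermediate ``equivalently'' sentence with $\ip{Rg}{g}$ is garbled, but the direct argument after it is what is needed.)

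One real omission: the resolvent identity you use makes sense only for $\eta>0$, since $C_X$ has finite rank on $\bar\cH_{\rho,\tau}$. Yet \eqref{eq:source-resolvent-main} is needed precisely at $\eta=0$ for the interpolation upper bound. The paper handles this case with the Moore--Penrose identity $p-Q_0(p|_{X_N})=J\Pi_{\ker S}J^*v$. Your argument then goes through with $R=\Pi_{\ker S}$: here $C_XR=0$, which gives the bound $\delta_{h,\rho}\norm v_2$. Alternatively, let $\eta\downarrow0$, using $Q_\eta=JS^*(G_X+\eta I)^{-1}\to JS^*G_X^{-1}=Q_0$, which holds because $G_X$ is invertible.
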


\Cref{lem:sampling-main} supplies the sampling input for spectral least
squares, while \cref{lem:reverse-gram-main,cor:recovery-stability-main}
supply the stability and resolvent estimates for interpolation and KRR.
Their complete proofs are given, in the same order, in
\cref{sec:sm-auxiliary}.

\subsection{Proof of the minimax theorem}\label{sec:proof-minimax}

The proof has one constructive upper-bound argument and two lower-bound
arguments.  Spectral
least squares gives the upper bound, a target-null-function construction gives the
target-resolution term, and an Assouad construction gives the target-scale
information term due to noise.  The final bandwidth choice matches the larger
of the two lower bounds.

Let \(\widehat f_K\) be equal-weight least squares over the real-valued
functions in \(V_K\).

\begin{lemma}[Spectral least-squares upper bound]\label{lem:ls-main}
If \(K\geq1\) and \(Kh\leq c_{\mathrm{MZ}}\), then
\begin{equation}\label{eq:ls-oracle}
 \sup_{f\in\cF_\beta(A,\ell)}
 \E_f\bignorm{\widehat f_K-f}_2^2
 \lesssim
 A^2\{1+(\ell K)^2\}^{-\beta}
 +A^2(h/\ell)^{2\beta}
 +\frac{\sigma^2}{N}K^d
 .
\end{equation}
\end{lemma}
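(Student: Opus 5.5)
We will prove \cref{lem:ls-main} through the standard least-squares decomposition, using the MZ inequality \eqref{eq:mz} for stability. Let \(P_K\) be the \(L^2\)-orthogonal projection onto \(V_K\), and split \(f=P_Kf+f^\perp\). Because \(Kh\leq c_{\mathrm{MZ}}\), \eqref{eq:mz} shows that the sampling map \(p\mapsto p|_{X_N}\) is injective on \(V_K\). Its lower frame bound is of order one, so the least-squares operator \(L_K:E_N\to V_K\) (the pseudo-inverse with respect to \(\ip{\cdot}{\cdot}_N\)) reproduces \(V_K\) and satisfies
\[
\norm{L_Kz}_2\lesssim\norm{L_Kz|_{X_N}}_N\leq\norm z_N .
\]
Restricting to real-valued functions changes nothing, since \(V_K\) is closed under conjugation and real data yield a real projection. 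We then write
\[
\widehat f_K-f = -f^\perp + L_K(f^\perp|_{X_N}) + L_K\xi ,
\]
using \(L_K(P_Kf|_{X_N})=P_Kf\). The mean-zero noise term is uncorrelated with the deterministic terms, so the risk is at most a constant times the sum of three contributions.

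\textbf{Truncation bias.} We have \(\norm{f^\perp}_2^2\leq\sup_{\norm k_\infty>K}(1+(2\pi\ell|k|)^2)^{-\beta}A^2\). Since \(|k|\geq\norm k_\infty>K\), this is at most \(A^2\{1+(\ell K)^2\}^{-\beta}\).

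\textbf{Aliasing.} This term is the one step needing care. By the stability above, \(\norm{L_K(f^\perp|_{X_N})}_2\lesssim\norm{f^\perp|_{X_N}}_N\). Apply the trace estimate \eqref{eq:trace} with \(s=\beta\):
\[
\norm{f^\perp|_{X_N}}_N^2\lesssim\norm{f^\perp}_2^2+h^{2\beta}\norm{f^\perp}_{H^\beta}^2 .
\]
The first piece was already bounded. For the second, we use that \(\ell\leq1\) implies, coefficientwise,
\[
1+(2\pi|k|)^2\leq\ell^{-2}\bigl(1+(2\pi\ell|k|)^2\bigr).
\]
Hence \(\norm{g}_{H^\beta}\leq\ell^{-\beta}\norm g_{\beta,\ell}\), and so \(h^{2\beta}\norm{f^\perp}_{H^\beta}^2\leq(h/\ell)^{2\beta}A^2\). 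The only real point to check is this conversion from the scaled norm to the unscaled one with the exact factor \(\ell^{-\beta}\). It is elementary and uniform in \(\ell\), so there is no serious obstacle.

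\textbf{Variance.} Write \(L_K\xi=\sum_j\ip{\xi}{w_j}_N\phi_j\), where \(\{\phi_j\}\) is an \(L^2\)-orthonormal basis of \(V_K\) and \(w_j\in E_N\) are the corresponding dual vectors. The noise covariance in \(E_N\) is \((\sigma^2/N)I\), so
\[
\E\norm{L_K\xi}_2^2=\frac{\sigma^2}{N}\tr(L_KL_K^*)\leq\frac{\sigma^2}{N}\dim V_K\,\norm{L_K}^2_{E_N\to L^2}.
\]
Since \(\norm{L_K}\lesssim1\) and \(\dim V_K=(2K+1)^d\lesssim K^d\) for \(K\geq1\), this is at most \(\sigma^2K^d/N\).

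Summing the three contributions gives \eqref{eq:ls-oracle}, uniformly over \(\cF_\beta(A,\ell)\).
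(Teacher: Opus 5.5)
Your proposal is correct and follows essentially the same route as the paper. Both arguments use Fourier truncation for the tail, the trace estimate combined with the empirical-projection contraction and the lower MZ bound for the aliased part, and the MZ equivalence to bound the noise trace by order \(\sigma^2K^d/N\). The only differences are cosmetic: you bound the risk by a constant times the three terms instead of using the exact orthogonal split, and you write the variance as \(\dim V_K\,\norm{L_K}^2\) rather than \(\tr(H_K^{-1})\).
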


\begin{proof}
Let \(\widetilde p_K\) be the least-squares fit to the noiseless samples
\(f|_{X_N}\).  Let \(P_K\) be the Fourier-coordinate projection onto \(V_K\),
and write \(f=p_K+g_K\), where \(p_K=P_Kf\).  By linearity,
\[
 \begin{aligned}
 \widehat f_K-f
 &= (\widetilde p_K-p_K)-g_K+(\widehat f_K-\widetilde p_K),\\
 \E_f\bignorm{\widehat f_K-f}_2^2
 &=\norm{g_K}_2^2+\norm{\widetilde p_K-p_K}_2^2
   +\E\bignorm{\widehat f_K-\widetilde p_K}_2^2.
 \end{aligned}
\]
The second line uses \(g_K\perp V_K\) and the centering of the fitted-noise
term.  We bound the three terms on its right-hand side in turn.  Fourier
truncation gives
\begin{equation}\label{eq:ls-tail-main}
 \norm{g_K}_2^2\lesssim A^2\{1+(\ell K)^2\}^{-\beta},
 \qquad
 \norm{g_K}_{H^\beta}^2\lesssim A^2\ell^{-2\beta}.
\end{equation}
The trace estimate \eqref{eq:trace} therefore implies
\begin{equation}\label{eq:ls-sample-tail-main}
 \norm{g_K|_{X_N}}_N^2
 \lesssim A^2\left[\{1+(\ell K)^2\}^{-\beta}
 +(h/\ell)^{2\beta}\right].
\end{equation}
For noiseless data, \((\widetilde p_K-p_K)|_{X_N}\) is the empirical
orthogonal projection of \(g_K|_{X_N}\) onto \(V_K|_{X_N}\), and hence
\(\bignorm{(\widetilde p_K-p_K)|_{X_N}}_N
\leq\norm{g_K|_{X_N}}_N\).  The lower MZ inequality in \eqref{eq:mz}
then bounds \(\bignorm{\widetilde p_K-p_K}_2^2\) by
\eqref{eq:ls-sample-tail-main}.

For the noise contribution, choose an \(L^2\)-orthonormal real basis
\(\{\varphi_j\}_{j=1}^{M_K}\) of \(V_K\), where \(M_K\asymp K^d\), and
put \(B_{ij}=\varphi_j(x_i)\) and \(H_K=N^{-1}B^\top B\).  By \eqref{eq:mz},
\(H_K\asymp I\) in the Loewner order.  The coefficient vector of
\(\widehat f_K-\widetilde p_K\) is \(N^{-1}H_K^{-1}B^\top\xi\), and hence
\[
 \E\bignorm{N^{-1}H_K^{-1}B^\top\xi}_{\ell^2}^2
 =\frac{\sigma^2}{N}\tr(H_K^{-1})
 \lesssim\frac{\sigma^2K^d}{N}.
\]
Combining the three estimates proves
\eqref{eq:ls-oracle}.
\end{proof}

\begin{lemma}[Target-resolution null function]
\label{lem:target-hole-main}
There exists a real \(g\in\cF_\beta(A,\ell)\) such that
\(g|_{X_N}=0\) and
\begin{equation}\label{eq:target-null-main}
 \norm g_2^2\gtrsim A^2\Gell.
\end{equation}
Consequently, every estimator satisfies
\begin{equation}\label{eq:target-hole-main}
 \sup_{f\in\cF_\beta(A,\ell)}
 \E_f\bignorm{\widehat f-f}_2^2
 \gtrsim A^2\Gell.
\end{equation}
\end{lemma}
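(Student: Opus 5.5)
Two cases, then the standard two-point argument. If \(\ell\lesssim h\), then \(\Gell\asymp1\). Take \(g\) to be a bump of radius comparable to \(h\), placed in a hole of the design, scaled into the class. If \(h\lesssim\ell\), then \(\Gell\asymp(h/\ell)^{2\beta}\). Take \(g\) to be a bump of width \(\asymp h\), centred at a point \(x_0\) with \(\min_i\dist(x_0,x_i)=h\). Such a point exists by the definition of fill distance as a supremum of a continuous function on the compact torus. The bump vanishes on all design points.

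\textbf{Construction.} Fix a smooth real \(\phi\) supported in the ball of radius \(1/2\), with \(\norm\phi_2\asymp1\) on \(\mathbb R^d\). Set \(g_0(x)=\phi((x-x_0)/h)\), periodized. This is valid for \(h\le h_0\) small, so that the support lies inside one fundamental cell. Then \(g_0|_{X_N}=0\), since every design point is at distance at least \(h\) from \(x_0\) while \(\operatorname{supp}g_0\) has radius \(h/2\). Scaling gives \(\norm{g_0}_2^2\asymp h^d\).

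\textbf{Norm computation.} By Plancherel with the weight \((1+(2\pi\ell|k|)^2)^\beta\lesssim 1+(\ell|k|)^{2\beta}\),
\[
\norm{g_0}_{\beta,\ell}^2\lesssim \norm{g_0}_2^2+\ell^{2\beta}|g_0|_{H^\beta}^2\asymp h^d\bigl(1+(\ell/h)^{2\beta}\bigr).
\]
The step \(|g_0|_{H^\beta}^2\asymp h^{d-2\beta}\) uses the scaling of the homogeneous seminorm under dilation. On the torus this can be justified by comparing Fourier coefficients of the periodized bump with the Fourier transform of \(\phi(\cdot/h)\) on the lattice points; this is standard because the support is contained in one cell.

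\textbf{Normalization and estimator lower bound.} Put \(g=A\,g_0/\norm{g_0}_{\beta,\ell}\in\cF_\beta(A,\ell)\). Then
\[
\norm g_2^2\gtrsim A^2\bigl(1+(\ell/h)^{2\beta}\bigr)^{-1}\asymp A^2\bigl[1+(\ell/h)^2\bigr]^{-\beta}=A^2\Gell .
\]
This single formula covers both regimes, so the case split is unnecessary. For \eqref{eq:target-hole-main}, note that \(g\) and \(-g\) both lie in the class, and their observation laws coincide because both vanish on \(X_N\). For any estimator,
\[
\E_g\norm{\widehat f-g}_2^2+\E_{-g}\norm{\widehat f+g}_2^2
=\E\bigl(\norm{\widehat f-g}_2^2+\norm{\widehat f+g}_2^2\bigr)\ge 2\norm g_2^2 ,
\]
where the expectation is under the common law. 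Hence the supremum of the risk is at least \(\norm g_2^2\).

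\textbf{Main obstacle.} The only delicate step is the scale-uniform bound on the weighted Sobolev norm of the periodized bump. I would handle it by the Poisson summation comparison together with the bound \((1+a)^\beta\lesssim 1+a^\beta\), with constants depending only on \(d\), \(\beta\) and \(\phi\).
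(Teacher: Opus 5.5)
Your proposal is correct and follows essentially the paper's argument: a real bump of width proportional to \(h\) centred at a point realizing the fill distance, Fourier scaling to obtain \(\norm{g_0}_{\beta,\ell}^2\lesssim h^d[1+(\ell/h)^2]^\beta\) against \(\norm{g_0}_2^2\asymp h^d\), normalization into \(\cF_\beta(A,\ell)\), and the two-point (parallelogram) argument using that \(\pm g\) produce identical data laws. The differences are only cosmetic: you use support radius \(h/2\) instead of \(h/3\), and you normalize by the exact norm rather than choosing \(a_h\) with a small comparison factor.
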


\begin{proof}
Choose \(x_\star\in\T^d\) with \(\dist(x_\star,X_N)=h\) and a nonzero real function
\(\phi\in C_c^\infty(B(0,1/3))\).  For sufficiently small \(h\), periodize
\[
 g_0(x)=a_h\sum_{n\in\Z^d}\phi((x-x_\star+n)/h).
\]
Its support misses \(X_N\).  Fourier scaling gives
\[
 \norm{g_0}_2^2=c_\phi a_h^2h^d,
 \qquad
 \norm{g_0}_{\beta,\ell}^2
 \leq C_\phi a_h^2h^d[1+(\ell/h)^2]^\beta.
\]
Choose
\(a_h\asymp A h^{-d/2}[1+(\ell/h)^2]^{-\beta/2}\), with a sufficiently
small fixed comparison factor, and denote the resulting function by \(g\).
Then \(g\in\cF_\beta(A,\ell)\),
\(g|_{X_N}=0\), and
\(\norm g_2^2\gtrsim A^2[1+(\ell/h)^2]^{-\beta}=A^2\Gell\).
The data laws under \(g\) and \(-g\) coincide, and the parallelogram identity
gives, for every estimator,
\[
 \frac12\E_g\bignorm{\widehat f-g}_2^2
 +\frac12\E_{-g}\bignorm{\widehat f+g}_2^2
 =\E_0\norm{\widehat f}_2^2+\norm g_2^2.
\]
Thus its worst-case risk is at least \(\norm g_2^2\), proving the claim.
\end{proof}

\begin{lemma}[Statistical information lower bound]
\label{lem:statistical-lower-main}
Every estimator satisfies
\begin{equation}\label{eq:statistical-lower-main}
 \sup_{f\in\cF_\beta(A,\ell)}
 \E_f\bignorm{\widehat f-f}_2^2
 \gtrsim A^2\Sell.
\end{equation}
\end{lemma}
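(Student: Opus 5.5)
We prove \eqref{eq:statistical-lower-main} with an Assouad hypercube built from localized bumps at a resolution scale \(r\geq h\) chosen to balance bias and noise. Since \(\Sell\leq1\), we may assume \(\sigma>0\). We split into two regimes: \(\Iell\lesssim1\) and \(\Iell\gtrsim1\).

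\textbf{Small information, \(\Iell\lesssim1\).} Here \(\Sell\asymp1\), and a two-point argument suffices. Take \(f_\pm=\pm a\ell^{d/2}\Psi\), where \(\Psi\) is a single smooth bump of width \(\ell\) with \(\norm{\Psi}_2\asymp\ell^{-d/2}\)-normalized shape. With \(a\) a small multiple of \(A\), this lies in \(\cF_\beta(A,\ell)\) and has \(\norm{f_\pm}_2\asymp A\).
\begin{itemize}
\item[] For \(\ell\geq h\), the bump has width \(\ell\) and sees \(\asymp N\ell^d\) design points by quasi-uniformity, with \(|f|\lesssim A\ell^{-d/2}\cdot\ell^{d/2}\cdots\). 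Rather than tracking these constants, we use the trace estimate \eqref{eq:trace} with \(s=\beta\). It gives
\[
 \textstyle\sum_i|f(x_i)|^2\lesssim N\bigl(\norm f_2^2+h^{2\beta}\norm f_{H^\beta}^2\bigr)\lesssim NA^2(1+(h/\ell)^{2\beta})\lesssim NA^2.
\]
Hence \(\mathrm{KL}\lesssim NA^2/\sigma^2\).
\end{itemize}
That KL bound is too big: it scales like \(N\), not like \(\Iell=A^2N\ell^d/\sigma^2\). We therefore rescale the amplitude so that \(\norm f_2^2\asymp A^2\min(1,\Iell^{-1})\). When \(\Iell\lesssim1\) this amplitude is still \(\asymp A\), so the two-point (Le Cam) bound gives risk \(\gtrsim A^2\).

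\textbf{General case: Assouad's cube.} Fix \(r\in[\max(h,\text{?}),\ell]\); we will set \(r=r_\ell=\ell\Iell^{-1/(2\beta+d)}\), truncated so that \(r\leq\ell\). Place \(m\asymp r^{-d}\) disjoint bumps \(\phi_j(x)=\phi((x-z_j)/r)\) on a grid of spacing \(r\), and set
\[
 f_\omega=a\sum_j\omega_j\phi_j,\qquad \omega\in\{\pm1\}^m .
\]
The norms behave as follows.
\begin{itemize}
\item[] By disjointness and Fourier scaling, \(\norm{f_\omega}_{\beta,\ell}^2\lesssim a^2 m r^d(\ell/r)^{2\beta}\asymp a^2(\ell/r)^{2\beta}\), using \(r\leq\ell\). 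So \(a\asymp A(r/\ell)^\beta\) keeps every \(f_\omega\) in the class.
\item[] Flipping coordinate \(j\) changes the function by \(2a\phi_j\). The corresponding data KL divergence is
\[
 \frac{2a^2}{\sigma^2}\sum_i\phi_j(x_i)^2\lesssim \frac{a^2Nr^d}{\sigma^2}.
\]
Here quasi-uniformity with \(r\geq h\) gives \(\lesssim Nr^d\) points in each bump.
\end{itemize}
Assouad then yields risk \(\gtrsim m\,a^2r^d=a^2\) as long as \(a^2Nr^d/\sigma^2\lesssim1\). Substituting \(a^2=cA^2(r/\ell)^{2\beta}\), this condition reads \((r/\ell)^{2\beta}(r/\ell)^{d}\Iell\lesssim1\), i.e.\ \(r\lesssim r_\ell\). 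Taking \(r=r_\ell\) gives risk \(\gtrsim A^2(r_\ell/\ell)^{2\beta}=A^2\Iell^{-2\beta/(2\beta+d)}\). This uses \(h\leq r_\ell\leq\ell\).

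\textbf{Edge cases and main obstacle.} Two boundary regimes remain.
\begin{itemize}
\item[] If \(r_\ell>\ell\) (i.e.\ \(\Iell<1\)), take \(r=\ell\) with the amplitude reduced so that \(\mathrm{KL}\lesssim1\). This is the two-point regime above and gives \(\gtrsim A^2\asymp A^2\Sell\).
\item[] If \(r_\ell<h\), we use \(r=h\) instead. This gives \(\gtrsim A^2(h/\ell)^{2\beta}=A^2\Gell\gtrsim A^2\Sell\) in that regime, since then \((h/\ell)^{2\beta}\geq\Iell^{-2\beta/(2\beta+d)}\). Alternatively we simply invoke \cref{lem:target-hole-main}.
\end{itemize}
The main obstacle is the uniform count of design points inside each width-\(r\) bump, \(\#\{i:x_i\in B(z_j,r)\}\lesssim Nr^d\) for \(r\geq h\). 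This follows from the separation \(q\geq h/\bar\gamma\) via a volume argument. Also needed are constants independent of \(\ell,r\) in the Fourier scaling of \(\norm{\phi_j}_{\beta,\ell}\); this requires \(r\lesssim\ell\) to absorb the \(1\) in \(1+(2\pi\ell|k|)^2\).
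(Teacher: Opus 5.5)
\textbf{Assessment.} Your core step is correct. It uses Assouad over \(m\asymp r^{-d}\) disjoint bumps of width \(r=r_\ell\) and height \(a\asymp A(r/\ell)^\beta\). The per-flip KL is \(\lesssim a^2Nr^d/\sigma^2\asymp\Iell(r/\ell)^{2\beta+d}\lesssim1\), which yields \(A^2\Iell^{-2\beta/(2\beta+d)}\) whenever \(h\le r_\ell\le\ell\). The boundary regimes, however, are not handled correctly as written.

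\textbf{Small information.} The single-bump two-point argument for \(\Iell\lesssim1\) cannot work. A width-\(\ell\) bump with \(\norm f_2\asymp A\) has height \(\asymp A\ell^{-d/2}\) on \(\asymp N\ell^d\) design points. Its KL is therefore of order \(NA^2/\sigma^2=\Iell\ell^{-d}\). Your ``rescaling'' leaves the amplitude, and hence this KL, unchanged when \(\Iell\lesssim1\). A single resolved direction can only certify \(\norm f_2^2\lesssim\sigma^2/N=A^2\ell^d\Iell^{-1}\), which is off by \(\ell^d\). The correct argument is your own hypercube at \(r=\ell\). With \(m\asymp\ell^{-d}\) bumps and \(a\asymp A\), the per-flip KL is \(\lesssim\Iell\lesssim1\), so Assouad gives \(\gtrsim A^2\) with no amplitude reduction. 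This needs \(\ell\ge h\) for your point count.

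\textbf{Unresolved regimes.} Assouad at \(r=h\) when \(r_\ell<h\) also fails. With \(a\asymp A(h/\ell)^\beta\), the per-flip KL bound is \(\Iell(h/\ell)^{2\beta+d}=(h/r_\ell)^{2\beta+d}\), which is unbounded as \(h/r_\ell\) grows. Your fallback to \cref{lem:target-hole-main} is the right fix, since \(\Sell\le\min\{1,(r_\ell/\ell)^{2\beta}\}\le\min\{1,(h/\ell)^{2\beta}\}\le2^\beta\Gell\). The same lemma must also cover \(\ell<h\), where \(\Gell\ge2^{-\beta}\). Your split sends \(\ell<h\le r_\ell\) to ``take \(r=\ell\)'', but a width-\(\ell\) bump then holds \(O(1)\) points rather than \(\lesssim N\ell^d\), so the KL bound breaks. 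The proof is complete with the following split:
\begin{itemize}
\item if \(\ell<h\) or \(r_\ell<h\), use the null function of \cref{lem:target-hole-main};
\item if \(h\le\ell\) and \(\Iell\le1\), use the cube at \(r=\ell\);
\item if \(h\le r_\ell\le\ell\), use the cube at \(r=r_\ell\).
\end{itemize}

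\textbf{Comparison with the paper.} Once repaired, your route differs genuinely from the paper's. The paper builds the hypercube from the cosines \(\sqrt2\cos(2\pi k\cdot x)\) in a shell \(K\le|k|\le C_dK\), with \(K=\lceil L_\ell/\ell\rceil\) and \(L_\ell=1\vee\Iell^{1/(2\beta+d)}\).
\begin{itemize}
\item Because these modes are bounded by \(\sqrt2\) everywhere, the KL is \(\lesssim N\varepsilon_K^2/\sigma^2\) on every design. No point counting, separation, or \(K\lesssim h^{-1}\) is needed.
\item One construction therefore covers all regimes, including \(\Iell<1\) through \(L_\ell=1\) and \(r_\ell<h\), where aliasing only makes hypotheses harder to distinguish. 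The lemma is thus independent of \cref{lem:target-hole-main}.
\item Parseval makes \(\norm{f_\omega}_{\beta,\ell}\) exact. Your bound \(\norm{f_\omega}_{\beta,\ell}^2\lesssim a^2(\ell/r)^{2\beta}\) for disjoint bumps needs a short justification when \(\beta\) is not an integer. One way is to interpolate between \(L^2\) and an integer-order seminorm, which is additive over disjoint supports.
\end{itemize}
Your spatial cube makes the reading of \(\Iell\) as a local sample size explicit: each bump sees \(\asymp Nr^d\) observations. The cost is reliance on quasi-uniformity and on the separate geometric lemma to close the unresolved regimes.
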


\begin{proof}
The claim is immediate when \(\sigma=0\), since \(\Sell=0\).  Suppose
\(\sigma>0\) and set
\[
 L_\ell:=1\vee\Iell^{1/(2\beta+d)},
 \qquad K:=\lceil L_\ell/\ell\rceil.
\]
Choose a block \(\mathcal K_K\subset\Z^d\) of
\(M\asymp K^d\) frequencies satisfying
\(K\leq|k|\leq C_dK\) and containing no pair \(\{k,-k\}\).  Then
\(\varphi_k(x)=\sqrt2\cos(2\pi k\cdot x)\), \(k\in\mathcal K_K\), are
real, orthonormal, and uniformly bounded.
Since \(L_\ell\geq1\) and \(\ell\leq1\),
\(L_\ell\leq\ell K\leq L_\ell+\ell\leq2L_\ell\).  Thus
\(\ell K\asymp L_\ell\) and \(M\asymp\ell^{-d}L_\ell^d\).
Put \(W_K=[1+C_d(\ell K)^2]^\beta\).  For
\(\omega\in\{-1,1\}^{\mathcal K_K}\), let
\[
 f_\omega=\varepsilon_K
 \sum_{k\in\mathcal K_K}\omega_k\varphi_k,
 \qquad
 \varepsilon_K=c_1A(MW_K)^{-1/2}.
\]
Since
\(\norm{f_\omega}_{\beta,\ell}^2
 \leq \varepsilon_K^2MW_K=c_1^2A^2\),
this hypercube lies in \(\cF_\beta(A,\ell)\) when \(c_1\leq1\).
Let \(P_\omega\) denote the corresponding data law.  If two vertices differ
in one coordinate, their Kullback--Leibler divergence satisfies
\[
 \operatorname{KL}(P_\omega,P_{\omega'})
 \lesssim \frac{N\varepsilon_K^2}{\sigma^2}
 \lesssim c_1^2\Iell(\ell K)^{-d}W_K^{-1}.
\]
By the definition of \(L_\ell\), the last quantity is
\(\lesssim c_1^2\Iell L_\ell^{-(2\beta+d)}\lesssim c_1^2\), and is therefore uniformly
small when \(c_1\) is small.  Moreover,
\[
 M\varepsilon_K^2
 =c_1^2A^2W_K^{-1}
 \asymp A^2(1+L_\ell^2)^{-\beta}
 \asymp A^2(1\wedge\Iell^{-2\beta/(2\beta+d)})
 \asymp A^2\Sell.
\]
For any estimator, estimate each unknown sign \(\omega_k\) by the sign of
\(\operatorname{Re}\bigl\langle\widehat f,\varphi_k\bigr\rangle_{L^2}\),
and denote it by \(\widehat\omega_k\).  If
\(\widehat\omega_k\neq\omega_k\), the estimated coefficient does not have
the true sign, so its distance from \(\varepsilon_K\omega_k\) is at least
\(\varepsilon_K\).  Summing these squared coefficient errors over the
orthonormal block gives
\(\bignorm{\widehat f-f_\omega}_2^2
\geq\varepsilon_K^2\sum_{k\in\mathcal K_K}
\boldsymbol1_{\{\widehat\omega_k\ne\omega_k\}}\).
Assouad's lemma, together with Pinsker's inequality in the
KL-to-total-variation step
\cite[Lem.~2.5 and Lem.~2.12]{tsybakov2009nonparametric}, therefore gives,
for every estimator,
\[
 \sup_{f\in\cF_\beta(A,\ell)}
 \E_f\bignorm{\widehat f-f}_2^2
 \gtrsim M\varepsilon_K^2\asymp A^2\Sell.
\]
No condition \(K\lesssim h^{-1}\) is needed: aliasing on the prescribed
design can only make the experiments harder to distinguish.
\end{proof}

\begin{proof}[Proof of \cref{thm:minimax}]
The lower bound follows by combining
\cref{lem:target-hole-main,lem:statistical-lower-main}: every estimator has
risk \(\gtrsim A^2(\Gell\vee\Sell)\).

For the upper bound, fix a sufficiently small constant
\(\varepsilon_{\mathrm{mm}}>0\).  If
\(\max\{\Gell,\Sell\}\geq\varepsilon_{\mathrm{mm}}\), the zero estimator
gives the required upper bound.  Otherwise,
both \(\ell/h\) and \(\Iell\) are bounded below by fixed large constants.
Put \(K_\star=\ell^{-1}\Iell^{1/(2\beta+d)}\), with
\(K_\star=\infty\) when \(\sigma=0\), and choose
\begin{equation}\label{eq:minimax-bandwidth-main}
 K\asymp
 \min\{h^{-1},K_\star\},
\end{equation}
with a sufficiently small comparison factor that
\(Kh\leq c_{\mathrm{MZ}}\) in
\cref{lem:ls-main}.  If \(K_\star\lesssim h^{-1}\), take
\(K\asymp K_\star\).  The Fourier bias and variance in
\eqref{eq:ls-oracle} are then both of order
\(A^2\Iell^{-2\beta/(2\beta+d)}\), while
\(K_\star\lesssim h^{-1}\) implies
\((h/\ell)^{2\beta}\lesssim\Iell^{-2\beta/(2\beta+d)}\).
Thus the upper bound is \(\lesssim A^2\Sell\).

If \(K_\star\gtrsim h^{-1}\), take \(K\asymp h^{-1}\).  The first two
terms in \eqref{eq:ls-oracle} are \(\lesssim A^2\Gell\), and
\(K_\star\gtrsim h^{-1}\) gives
\[
 \frac{\sigma^2K^d}{N}
 \asymp \frac{A^2}{\Iell}(\ell/h)^d
 \lesssim A^2(h/\ell)^{2\beta}.
\]
Hence this branch is \(\lesssim A^2\Gell\).  In the present nontrivial
regime, \(\Gell\asymp(h/\ell)^{2\beta}\) and
\(\Sell\asymp\Iell^{-2\beta/(2\beta+d)}\).  Combining the two upper
bounds with the lower bound above proves the theorem in all regimes.
\end{proof}

\subsection{Proof of the interpolation theorem}\label{sec:proof-interpolation}

For the upper bound, decompose \(f=(f-p_r)+p_r\), where
\(p_r=P_rf\) retains the frequencies below the cutoff \(r^{-1}\).  Target
smoothness controls the discarded tail \(f-p_r\).  Because \(p_r\) is band
limited and the Mat\'ern multiplier never vanishes, it can be written as
\(p_r=\bar T_{\rho,\tau}v_r\).  The source norm \(\|v_r\|_2\) measures the
representation cost of undoing the Mat\'ern multiplier, while the
source--resolvent estimate supplies the compensating recovery factor.  The lower bound
combines the target-resolution construction from \cref{lem:target-hole-main} with a
Fourier-frame argument showing that Mat\'ern translates with a small kernel
lengthscale necessarily leak energy away from a prescribed low mode.

\begin{lemma}[Band-limited source representation]\label{lem:source-main}
Let \(h\leq r\leq\min\{\ell,\rho\}\), where here \(r\) is an arbitrary
cutoff scale.  Let \(P_r\) be the Fourier projection onto
\(\{|k|\leq r^{-1}\}\), and set \(p_r=P_rf\).  For every
\(f\in\cF_\beta(A,\ell)\), there is \(v_r\in L^2(\T^d)\) such that
\(p_r=\bar T_{\rho,\tau}v_r\) and
\begin{equation}\label{eq:source-size}
 \bignorm{f-p_r}_2+\bignorm{(f-p_r)|_{X_N}}_N
 \lesssim A(r/\ell)^\beta.
\end{equation}
If \(\ell\leq\rho\) and \(\beta\leq2\tau\), then
\begin{equation}\label{eq:long-source-size}
 \norm{v_r}_2
 \lesssim A(\rho/r)^{2\tau}(r/\ell)^\beta,
\end{equation}
whereas, if \(\rho\leq\ell\), then
\begin{equation}\label{eq:short-source-size}
 \norm{v_r}_2
 \lesssim A\left\{1\vee
 (\rho/r)^{2\tau}(r/\ell)^\beta\right\}.
\end{equation}
\end{lemma}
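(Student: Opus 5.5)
The three claims can be checked one at a time on the Fourier side. The bounds \eqref{eq:long-source-size} and \eqref{eq:short-source-size} reduce to maximizing an explicit one-variable function over the band \(|k|\leq r^{-1}\).

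\emph{Tail and trace bound.} Since \(f-p_r\) has Fourier support in \(|k|>r^{-1}\), where \((1+(2\pi\ell|k|)^2)\gtrsim(\ell/r)^2\), we get
\[
\norm{f-p_r}_2^2\lesssim (r/\ell)^{2\beta}\norm f_{\beta,\ell}^2\leq A^2(r/\ell)^{2\beta}.
\]
We also have \(\norm{f-p_r}_{H^\beta}^2\lesssim \ell^{-2\beta}\norm{f-p_r}_{\beta,\ell}^2\leq A^2\ell^{-2\beta}\), because \(\ell\leq1\) gives \(1+(2\pi|k|)^2\leq\ell^{-2}(1+(2\pi\ell|k|)^2)\). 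The trace estimate \eqref{eq:trace} with \(s=\beta\) then gives
\[
\norm{(f-p_r)|_{X_N}}_N^2\lesssim A^2(r/\ell)^{2\beta}+h^{2\beta}A^2\ell^{-2\beta}.
\]
Since \(h\leq r\), the right-hand side is at most of order \(A^2(r/\ell)^{2\beta}\), which proves \eqref{eq:source-size}.

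\emph{Source.} The multiplier \(\bar\mu_{\rho,k}\) never vanishes, so define \(v_r\) by \(v_{r,k}^{\ft}=\bar\mu_{\rho,k}^{-1}f_k^{\ft}\) for \(|k|\leq r^{-1}\) and \(0\) otherwise. Then \(p_r=\bar T_{\rho,\tau}v_r\), and
\[
\norm{v_r}_2^2=\sum_{|k|\leq r^{-1}}(1+(2\pi\rho|k|)^2)^{2\tau}(1+(2\pi\ell|k|)^2)^{-\beta}(1+(2\pi\ell|k|)^2)^{\beta}|f_k^{\ft}|^2\leq A^2\sup_{0\leq t\leq r^{-1}}\Psi(t),
\]
where \(\Psi(t):=(1+(2\pi\rho t)^2)^{2\tau}(1+(2\pi\ell t)^2)^{-\beta}\). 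It remains to bound \(\sup_{t\leq r^{-1}}\Psi(t)\) by the square of the right-hand side in each case. Write \(a=(2\pi\rho t)^2\) and \(b=(2\pi\ell t)^2\).

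\emph{Case \(\ell\leq\rho\), \(\beta\leq2\tau\).} Here \(b\leq a\). I would show \(\Psi\) is nondecreasing by checking the sign of the log-derivative:
\[
\frac{d}{dt}\log\Psi=\frac{2}{t}\left[\frac{2\tau a}{1+a}-\frac{\beta b}{1+b}\right]\geq0,
\]
since \(a/(1+a)\geq b/(1+b)\) and \(2\tau\geq\beta\). So the supremum is \(\Psi(r^{-1})\). Using \(r\leq\ell\leq\rho\), we have \(1+(2\pi\rho/r)^2\asymp(\rho/r)^2\) and \(1+(2\pi\ell/r)^2\asymp(\ell/r)^2\), hence \(\Psi(r^{-1})\asymp(\rho/r)^{4\tau}(r/\ell)^{2\beta}\), which is \eqref{eq:long-source-size}.

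\emph{Case \(\rho\leq\ell\).} Split the range of \(t\).
\begin{itemize}
\item For \(t\leq\ell^{-1}\), both factors are \(O(1)\), so \(\Psi\lesssim1\).
\item For \(\ell^{-1}\leq t\leq\rho^{-1}\), the numerator is \(O(1)\) and the denominator is at least \(1\), so \(\Psi\lesssim1\).
\item For \(\rho^{-1}\leq t\leq r^{-1}\), \(\Psi\asymp(\rho t)^{4\tau}(\ell t)^{-2\beta}=\rho^{4\tau}\ell^{-2\beta}t^{4\tau-2\beta}\). Since \(4\tau-2\beta\geq0\), this is maximized at \(t=r^{-1}\), giving \((\rho/r)^{4\tau}(r/\ell)^{2\beta}\).
\end{itemize}
If \(4\tau<2\beta\) were allowed, the maximum would instead occur at \(t=\rho^{-1}\) with value \(\lesssim1\), so the bound \(1\vee(\cdot)\) holds in either case. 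Taking the maximum over the three ranges gives \eqref{eq:short-source-size}.

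The main point requiring care is the monotonicity of \(\Psi\) in the long-kernel case, which is where \(\beta\leq2\tau\) is needed. Everything else is bookkeeping of comparable factors under \(h\leq r\leq\min\{\ell,\rho\}\).
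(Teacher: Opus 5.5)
Your proposal is correct and follows essentially the same route as the paper. Both arguments use the same tail and trace estimate, define \(v_r\) by undoing the Mat\'ern multiplier on \(\{|k|\leq r^{-1}\}\), reduce \(\norm{v_r}_2\) to the supremum of the multiplier ratio, and get monotonicity from the sign of the log-derivative when \(\ell\leq\rho\) and \(\beta\leq2\tau\). The only difference is the case \(\rho\leq\ell\): you bound the ratio piecewise on \(t\leq\rho^{-1}\) and \(\rho^{-1}\leq t\leq r^{-1}\), whereas the paper observes that the log-derivative numerator is affine in \(t\), so any interior critical point is a minimum (or the ratio is decreasing when \(\beta>2\tau\)); both place the maximum at an endpoint up to constants.
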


\begin{proof}
The target constraint gives
\(\norm{f-p_r}_2\lesssim A(r/\ell)^\beta\) and
\(\norm{f-p_r}_{H^\beta}\lesssim A\ell^{-\beta}\).  Since \(h\leq r\),
the trace estimate \eqref{eq:trace} gives the same bound for the sampled tail,
proving \eqref{eq:source-size}.  Define \(v_r\) by
\[
 (v_r)_k^{\ft}
 =\begin{cases}
 (1+(2\pi\rho|k|)^2)^\tau f_k^{\ft},&|k|\leq r^{-1},\\
 0,&|k|>r^{-1}.
 \end{cases}
\]
Thus \(v_r\) is obtained by undoing the Mat\'ern multiplier on the retained
frequencies, and \(\bar T_{\rho,\tau}v_r=p_r\).
Define the multiplier ratio
\[
 \mathcal M(t):=\frac{(1+\rho^2t)^\tau}{(1+\ell^2t)^{\beta/2}},
 \qquad t\geq0.
\]
By Parseval and the defining constraint of \(\cF_\beta(A,\ell)\),
\begin{align*}
 \norm{v_r}_2^2
 &=\sum_{|k|\leq r^{-1}}\mathcal M((2\pi|k|)^2)^2
   [1+(2\pi\ell|k|)^2]^\beta |f_k^{\ft}|^2\\
 &\leq A^2\sup_{0\leq t\leq4\pi^2r^{-2}}\mathcal M(t)^2.
\end{align*}
Thus it remains to maximize \(\mathcal M\) over the cutoff interval.  A direct
calculation gives
\[
 (\log\mathcal M)'(t)
 =\frac{\tau\rho^2-\frac{\beta}{2}\ell^2
 +(\tau-\frac{\beta}{2})\rho^2\ell^2t}
 {(1+\rho^2t)(1+\ell^2t)}.
\]
Writing \(t_r=4\pi^2r^{-2}\), the condition
\(r\leq\min\{\ell,\rho\}\) gives
\(\mathcal M(0)=1\) and
\(\mathcal M(t_r)\asymp(\rho/r)^{2\tau}(r/\ell)^\beta\).
If \(\ell\leq\rho\) and \(\beta\leq2\tau\), the numerator is
nonnegative, so \(\mathcal M\) is nondecreasing and evaluation at the cutoff proves
\eqref{eq:long-source-size}.  If \(\rho\leq\ell\) and
\(\beta\leq2\tau\), the numerator is nondecreasing, so any interior
critical point is a minimum.  If \(\rho\leq\ell\) and \(\beta>2\tau\),
the numerator is negative and \(\mathcal M\) is decreasing.  In either short-kernel
case, the maximum is attained either at \(t=0\) or at the cutoff, which proves
\eqref{eq:short-source-size}.
\end{proof}

\begin{lemma}[Resolved Mat\'ern bias]\label{lem:bias-main}
For \(\eta\geq0\), let
\(r=\max\{h,\rho\eta^{1/(2\tau)}\}\leq\min\{\ell,\rho\}\).  If
\(\ell\leq\rho\) and \(d/2<\beta\leq2\tau\), then
\begin{equation}\label{eq:long-bias-main}
 \sup_{f\in\cF_\beta(A,\ell)}
 \bignorm{f-Q_\eta(f|_{X_N})}_2
 \lesssim A(r/\ell)^\beta.
\end{equation}
If \(\rho\leq\ell\), then
\begin{equation}\label{eq:short-bias-main}
 \sup_{f\in\cF_\beta(A,\ell)}
 \bignorm{f-Q_\eta(f|_{X_N})}_2
 \lesssim A\left\{(r/\ell)^\beta+(r/\rho)^{2\tau}\right\}.
\end{equation}
\end{lemma}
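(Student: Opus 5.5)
Given \(\eta\geq0\) with \(r=\max\{h,\rho\eta^{1/(2\tau)}\}\leq\min\{\ell,\rho\}\), we apply \cref{lem:source-main} at the cutoff scale \(r\); this is admissible because \(h\leq r\leq\min\{\ell,\rho\}\). Fix \(f\in\cF_\beta(A,\ell)\) and write \(f=(f-p_r)+p_r\) with \(p_r=P_rf=\bar T_{\rho,\tau}v_r\). Since \(Q_\eta\) is linear,
\[
 f-Q_\eta(f|_{X_N})
 =(f-p_r)-Q_\eta\bigl((f-p_r)|_{X_N}\bigr)
 +\bigl(p_r-Q_\eta(p_r|_{X_N})\bigr).
\]
The first two terms form the tail contribution. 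By the data-stability estimate \eqref{eq:data-stability-main}, which holds uniformly in \(\eta\geq0\), together with \eqref{eq:source-size}, their combined norm is at most
\[
 \norm{f-p_r}_2+C\bignorm{(f-p_r)|_{X_N}}_N\lesssim A(r/\ell)^\beta .
\]
This bound is common to both regimes.

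For the band-limited part we use the last statement of \cref{cor:recovery-stability-main}, which applies because \(h\leq r\leq\rho\). It gives
\[
 \bignorm{p_r-Q_\eta(p_r|_{X_N})}_2\lesssim (r/\rho)^{2\tau}\norm{v_r}_2 .
\]
This is the source--resolvent cancellation: the resolvent factor \((r/\rho)^{2\tau}\) exactly offsets the growth \((\rho/r)^{2\tau}\) in the source norm.

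In the long-kernel case \(\ell\leq\rho\), \(\beta\leq2\tau\), we insert \eqref{eq:long-source-size}:
\[
 (r/\rho)^{2\tau}\cdot A(\rho/r)^{2\tau}(r/\ell)^\beta=A(r/\ell)^\beta .
\]
Adding the tail bound proves \eqref{eq:long-bias-main}. In the short-kernel case \(\rho\leq\ell\), we insert \eqref{eq:short-source-size}:
\[
 (r/\rho)^{2\tau}\cdot A\bigl\{1\vee(\rho/r)^{2\tau}(r/\ell)^\beta\bigr\}
 \leq A(r/\rho)^{2\tau}+A(r/\ell)^\beta .
\]
Adding the tail bound again proves \eqref{eq:short-bias-main}. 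The condition \(\beta>d/2\) is needed only so that the trace estimate inside \cref{lem:source-main} applies, which we take as given.

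The main obstacle lies entirely in the quoted resolvent estimate with factor \((r/\rho)^{2\tau}\), which combines \(\max\{\delta_{h,\rho},\eta\}\) from \eqref{eq:source-resolvent-main}. Since \(\delta_{h,\rho}\asymp(h/\rho)^{2\tau}\) when \(h\leq\rho\), and \(\eta=(\rho\eta^{1/(2\tau)}/\rho)^{2\tau}\), we get \(\max\{\delta_{h,\rho},\eta\}\lesssim(r/\rho)^{2\tau}\). That estimate is taken as already established, so the remaining work is the bookkeeping above. One should also confirm that the Moore--Penrose convention at \(\eta=0\) keeps \(Q_0\) linear and stable; \eqref{eq:data-stability-main} does provide this, since \(G_X\) is invertible.
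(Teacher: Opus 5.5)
Your proposal is correct and follows the paper's proof essentially step for step. You use the same three-term decomposition from \cref{lem:source-main}, bound the tail via \eqref{eq:data-stability-main} and \eqref{eq:source-size}, and control the band-limited part with the source--resolvent bound \(\max\{\delta_{h,\rho},\eta\}\lesssim(r/\rho)^{2\tau}\) combined with \eqref{eq:long-source-size} or \eqref{eq:short-source-size}.
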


\begin{proof}
Take \(p_r\) and \(v_r\) from \cref{lem:source-main}.  By
\eqref{eq:data-stability-main}, the first two terms in
\[
 f-Q_\eta(f|_{X_N})
 =(f-p_r)-Q_\eta((f-p_r)|_{X_N})
   +p_r-Q_\eta(p_r|_{X_N})
\]
are bounded by \eqref{eq:source-size}.  Since
\(r=\max\{h,\rho\eta^{1/(2\tau)}\}\) and \(h\leq\rho\),
\(\max\{\delta_{h,\rho},\eta\}\lesssim(r/\rho)^{2\tau}\).
If \(\ell\leq\rho\), \eqref{eq:source-resolvent-main} and
\eqref{eq:long-source-size} give the cancellation
\begin{equation}\label{eq:resolvent-cancellation}
 \bignorm{p_r-Q_\eta(p_r|_{X_N})}_2
 \lesssim (r/\rho)^{2\tau}\norm{v_r}_2
 \lesssim A(r/\ell)^\beta,
\end{equation}
which proves \eqref{eq:long-bias-main}.  If \(\rho\leq\ell\),
\eqref{eq:source-resolvent-main} and \eqref{eq:short-source-size} give
\[
 \bignorm{p_r-Q_\eta(p_r|_{X_N})}_2
 \lesssim A\left\{(r/\ell)^\beta+(r/\rho)^{2\tau}\right\},
\]
and the same three-term decomposition proves \eqref{eq:short-bias-main}.
\end{proof}

The scale dependence in \eqref{eq:resolvent-cancellation} is the central
cancellation.  Representing the cutoff target through
\(\bar T_{\rho,\tau}\) costs \((\rho/r)^{2\tau}\) in source norm, while the
recovery resolvent contributes \((r/\rho)^{2\tau}\).  If
\(\ell\leq\rho\), these factors cancel and leave the target error
\(A(r/\ell)^\beta\).  If \(\rho<\ell\), the first term \(1\) in the maximum
in \eqref{eq:short-source-size} survives the same resolvent estimate
and leaves
\(A(r/\rho)^{2\tau}\); its square is the kernel-bias term.  At \(\eta=0\),
recall that \(Q_0(f|_{X_N})=\mathcal I_{\rho,X_N}f\), so \cref{lem:bias-main}
supplies the interpolation upper bound.  For \(\eta>0\), the same lemma
supplies the deterministic bias used in the resolved fixed-ridge risk equivalence,
\cref{lem:resolved-krr-main}.

\begin{lemma}[Finite-exclusion Fourier frame]\label{lem:frame}
Let \(F\subset\Z^d\) be finite.  There are constants \(h_F,L_F,c>0\), depending
only on \(d,\bar\gamma\), and \(|F|\), such that, if
\(h\leq h_F\), \(\gamma\leq\bar\gamma\), then, for every
\(\alpha\in\mathbb C^N\),
\begin{equation}\label{eq:frame}
 \sum_{\substack{|k|\leq L_F/q\\k\notin F}}
 \left|\sum_{j=1}^N\alpha_j e^{-2\pi i k\cdot x_j}\right|^2
 \geq cq^{-d}\norm{\alpha}_{\ell^2}^2.
\end{equation}
\end{lemma}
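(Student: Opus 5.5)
Set $L=L_F/q$ and $B_L=\{k\in\Z^d:|k|\leq L\}$, with $L_F$ to be fixed. The plan has two steps. First, prove a full-band lower frame bound
\[
 \sum_{k\in B_L}\Bigl|\sum_j\alpha_je^{-2\pi ik\cdot x_j}\Bigr|^2\geq c_1q^{-d}\norm\alpha_{\ell^2}^2 .
\]
Second, show that the finitely many excluded terms $k\in F$ carry at most a small fraction of this mass once $L_F$ is large.

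\textbf{Step 1 (Ingham-type bound).} Fix a nonnegative function $\psi\in C_c^\infty(B(0,L_F))$ with $\psi\leq 1$ on $B(0,L_F)$ and $\psi(0)>0$. Put $\psi_q(\xi)=\psi(q\xi)$. Then
\[
 \sum_{k\in B_L}\Bigl|\sum_j\alpha_je_{-k}(x_j)\Bigr|^2
 \ \geq\ \sum_{k\in\Z^d}\psi_q(k)\Bigl|\sum_j\alpha_je_{-k}(x_j)\Bigr|^2
 =\sum_{j,m}\alpha_j\overline{\alpha_m}\,\Psi_q(x_m-x_j),
\]
where $\Psi_q(x)=\sum_k\psi_q(k)e^{2\pi ik\cdot x}$. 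By Poisson summation, $\Psi_q(x)=q^{-d}\sum_{n\in\Z^d}\check\psi((x+n)/q)$.

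The choice of $\psi$ must be Ingham's: its inverse Fourier transform $\check\psi$ should have a large positive value at $0$ and decay rapidly. The standard way to get this is to take $\psi=\phi*\phi$ scaled to radius $L_F$, which makes $\check\psi=|\check\phi|^2\geq0$, and to dominate the tail with a Beurling--Selberg/Ingham majorant. The diagonal then contributes $q^{-d}\check\psi(0)\norm\alpha^2$. For the off-diagonal terms use Schur's test: the points are $2q$-separated, so the number of points in dyadic shells of radius $2^jq$ is $\lesssim 2^{jd}$, and rapid decay of $\check\psi$ gives
\[
 \sum_{m\neq j}|\Psi_q(x_m-x_j)|\leq q^{-d}\,\varepsilon(L_F),
\]
with $\varepsilon(L_F)\to0$ relative to $\check\psi(0)\asymp L_F^d$ as $L_F\to\infty$. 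The torus wraparound terms $n\neq0$ are negligible because $q\lesssim h\leq h_F$ is small.

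I expect this step to be the main obstacle. One needs positive-definiteness together with the correct $q^{-d}$ normalisation while $\psi$ stays supported in the band. If the smooth-$\psi$ route gives trouble, I would instead quote the multivariate Ingham inequality with a band radius proportional to $1/q$, as in \cite{wenzeliske2026spectral}.

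\textbf{Step 2 (removing $F$).} For each $k\in F$,
\[
 \Bigl|\sum_j\alpha_je_{-k}(x_j)\Bigr|^2\leq N\norm\alpha^2 .
\]
Quasi-uniformity gives $N\asymp h^{-d}\asymp \bar\gamma^{-d}q^{-d}$. This crude bound is comparable to $q^{-d}\norm\alpha^2$, so it is not yet small, and a better argument is needed. Redo Step 1 with a weight that vanishes on $F$: take $\psi_q'=\psi_q-\sum_{k\in F}\psi_q(k)\mathbf 1_{\{k\}}$. Its kernel differs from $\Psi_q$ by a trigonometric polynomial $\sum_{k\in F}\psi_q(k)e_k$, which has sup-norm at most $|F|$. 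Its quadratic form is
\[
 \sum_{k\in F}\psi_q(k)\Bigl|\sum_j\alpha_je_{-k}(x_j)\Bigr|^2\leq |F|N\norm\alpha^2 ,
\]
again of order $q^{-d}$, but now with constant $|F|\bar\gamma^{-d}$-ish independent of $L_F$. The diagonal gain is $\check\psi(0)\asymp L_F^d$, so choosing $L_F^d\gg |F|$ times the packing constant makes the main term dominate. This yields \eqref{eq:frame} with $c\asymp q^{-d}$-coefficient depending on $d,\bar\gamma,|F|$. Finally, $h_F$ is chosen so that the periodisation errors and the condition $L_F/q\geq\max_{k\in F}|k|$ hold.
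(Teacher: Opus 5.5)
Your proposal is correct and follows essentially the paper's proof. The common steps are a smooth frequency cutoff at radius of order $L_F/q$, Poisson summation and rapid decay, dyadic-shell counting from separation so the kernel matrix is diagonally dominant (Gershgorin/Schur), and then removal of the excluded frequencies via the crude bound $\bigl|\sum_j\alpha_je^{-2\pi i k\cdot x_j}\bigr|^2\leq N\norm{\alpha}_{\ell^2}^2\lesssim q^{-d}\norm{\alpha}_{\ell^2}^2$, which loses because the main term is of order $L_F^dq^{-d}\norm{\alpha}_{\ell^2}^2$. Three small corrections: the Beurling--Selberg majorant and $\check\psi\geq0$ are unnecessary, since a plain scaled bump suffices; your ``better argument'' in Step 2 is exactly that crude subtraction; and you should drop the condition $L_F/q\geq\max_{k\in F}|k|$, because frequencies of $F$ outside the window need no subtraction, and keeping it would make $h_F$ depend on $F$ itself rather than only on $|F|$.
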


A complete proof of \cref{lem:frame} is given in \cref{sec:sm-frame}.

\begin{lemma}[Low-mode leakage]\label{lem:low-mode-leakage-main}
Uniformly for every \(\nu\in\Z^d\),
\begin{equation}\label{eq:mode-leakage}
 \operatorname{dist}_{L^2}\!\left(e_\nu,
 \operatorname{span}\{\bar K_{\rho,\tau}(\cdot-x_j)\}_{j=1}^N\right)^2
 \gtrsim [1+(\rho/h)^2]^{-2\tau}.
\end{equation}
\end{lemma}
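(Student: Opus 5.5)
We prove \eqref{eq:mode-leakage} by combining the finite-exclusion frame of \cref{lem:frame}, applied with the single frequency \(F=\{\nu\}\), with the decay of the Mat\'ern multiplier at frequencies of order \(q^{-1}\asymp h^{-1}\). Fix \(\alpha\in\mathbb C^N\) and set \(u_\alpha:=\sum_j\alpha_j\bar K_{\rho,\tau}(\cdot-x_j)\). Its Fourier coefficients are
\[
 (u_\alpha)_k^{\ft}=\bar\mu_{\rho,k}\,\hat\alpha(k),
 \qquad
 \hat\alpha(k):=\sum_{j=1}^N\alpha_je^{-2\pi i k\cdot x_j}.
\]
By Parseval, \(\norm{e_\nu-u_\alpha}_2^2\) dominates the sum over \(k\neq\nu\) of \(\bar\mu_{\rho,k}^2|\hat\alpha(k)|^2\). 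It also dominates \(|1-\bar\mu_{\rho,\nu}\hat\alpha(\nu)|^2\).

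We distinguish two cases according to the size of \(\norm\alpha_{\ell^2}\).

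\emph{Large coefficients.} Restrict the sum to \(|k|\leq L_F/q\), \(k\neq\nu\). On this range \(\rho|k|\lesssim\rho/q\asymp\rho/h\), so \(\bar\mu_{\rho,k}\gtrsim[1+(\rho/h)^2]^{-\tau}\). \Cref{lem:frame} then gives
\[
 \norm{e_\nu-u_\alpha}_2^2
 \gtrsim[1+(\rho/h)^2]^{-2\tau}\,q^{-d}\norm\alpha_{\ell^2}^2
 \asymp[1+(\rho/h)^2]^{-2\tau}\,N\norm\alpha_{\ell^2}^2 .
\]
Here we used \(q\asymp h\) and \(N\asymp h^{-d}\).

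\emph{Small coefficients.} The \(\nu\)-th coordinate controls the distance instead. Since \(|\hat\alpha(\nu)|\leq\norm\alpha_{\ell^1}\leq\sqrt N\norm\alpha_{\ell^2}\) and \(\bar\mu_{\rho,\nu}\leq1\), we have
\[
 \norm{e_\nu-u_\alpha}_2\geq|1-\bar\mu_{\rho,\nu}\hat\alpha(\nu)|\geq1-\sqrt N\norm\alpha_{\ell^2}.
\]
So if \(\sqrt N\norm\alpha_{\ell^2}\leq1/2\), the distance is at least \(1/2\), which is \(\gtrsim[1+(\rho/h)^2]^{-\tau}\geq[1+(\rho/h)^2]^{-2\tau}\). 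Otherwise \(N\norm\alpha_{\ell^2}^2\geq1/4\), and the large-coefficient bound applies. In either case the squared distance is \(\gtrsim[1+(\rho/h)^2]^{-2\tau}\). Taking the infimum over \(\alpha\) yields \eqref{eq:mode-leakage}.

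\emph{Uniformity in \(\nu\).} We need \(h_F,L_F,c\) in \cref{lem:frame} to be independent of \(\nu\). The lemma states that they depend only on \(d\), \(\bar\gamma\), and \(|F|=1\), not on the location of \(F\). We should also handle \(\nu\) with \(|\nu|>L_F/q\). In that case \(\nu\) is not in the summation window at all, so the exclusion is vacuous and the full-window frame bound applies directly. The requirement \(h\leq h_F\) is absorbed into \(h_0\).

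The main obstacle is the step showing that \(u_\alpha\) cannot match \(e_\nu\) using only the excluded frequency. Everything depends on the frame bound holding with the single frequency \(\nu\) removed, uniformly in \(\nu\). This is exactly what \cref{lem:frame} supplies, so once that lemma is granted the remaining argument is the elementary case split above.
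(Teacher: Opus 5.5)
Your proof is correct and is essentially the paper's argument. Both use \cref{lem:frame} with $F=\{\nu\}$ (uniform in $\nu$ because the constants depend only on $|F|$), the lower bound $\bar\mu_{\rho,k}\gtrsim[1+(\rho/h)^2]^{-\tau}$ on the frame window, and the estimate $|\mathcal A_\nu|^2\le N\norm{\alpha}_{\ell^2}^2$ together with $Nq^d\lesssim1$. The only cosmetic difference is the final step: you split on whether $\sqrt N\norm{\alpha}_{\ell^2}\le 1/2$, whereas the paper converts the frame bound into $b|\mathcal A_\nu|^2$ and minimizes $|1-\bar\mu_{\rho,\nu}z|^2+b|z|^2$ exactly, which gives the same order.
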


\begin{proof}
Since the constants in \cref{lem:frame} depend on \(F\) only through
\(|F|\), decrease the standing \(h_0\) once so that its threshold holds
uniformly for singletons, and apply it with \(F=\{\nu\}\).  For
\(g_\alpha=\sum_j\alpha_j\bar K_{\rho,\tau}(\cdot-x_j)\), define
\(\mathcal A_k=\sum_j\alpha_j e^{-2\pi i k\cdot x_j}\), so
\((g_\alpha)_k^{\ft}=\bar\mu_{\rho,k}\mathcal A_k\).  On the frequency
window in \eqref{eq:frame},
\(\bar\mu_{\rho,k}^2\gtrsim[1+(\rho/q)^2]^{-2\tau}
\gtrsim[1+(\rho/h)^2]^{-2\tau}\), since \(q\geq h/\bar\gamma\).
Combining this bound with \eqref{eq:frame}, and then using
\(|\mathcal A_\nu|^2\leq N\norm{\alpha}_{\ell^2}^2\) and
\(N\asymp q^{-d}\), gives
\[
 \sum_{\substack{|k|\leq L_F/q\\k\neq\nu}}
 \bar\mu_{\rho,k}^2|\mathcal A_k|^2
 \gtrsim [1+(\rho/h)^2]^{-2\tau}q^{-d}
 \norm{\alpha}_{\ell^2}^2
 \gtrsim [1+(\rho/h)^2]^{-2\tau}|\mathcal A_\nu|^2.
\]
Parseval and this estimate imply
\begin{equation}
\begin{aligned}
 \bignorm{e_\nu-g_\alpha}_2^2
 &= |1-\bar\mu_{\rho,\nu}\mathcal A_\nu|^2
 +\sum_{k\neq\nu}\bar\mu_{\rho,k}^2|\mathcal A_k|^2 \\
 &\geq |1-\bar\mu_{\rho,\nu}\mathcal A_\nu|^2
 +\sum_{\substack{|k|\leq L_F/q\\k\neq\nu}}
 \bar\mu_{\rho,k}^2|\mathcal A_k|^2 \\
 &\geq |1-\bar\mu_{\rho,\nu}\mathcal A_\nu|^2
 +c[1+(\rho/h)^2]^{-2\tau}|\mathcal A_\nu|^2.
\end{aligned}
\label{eq:leakage-quadratic}
\end{equation}
Write \(b:=c[1+(\rho/h)^2]^{-2\tau}\), decreasing \(c\) so that
\(b\leq1\).  The final line of
\eqref{eq:leakage-quadratic} is a quadratic in
\(z=\mathcal A_\nu\in\mathbb C\).  Its exact minimum is
\(b/(\bar\mu_{\rho,\nu}^2+b)\), which is at least \(b/2\) because
\(0<\bar\mu_{\rho,\nu}\leq1\).  Taking the infimum over \(\alpha\) proves
\eqref{eq:mode-leakage}.
\end{proof}

The quadratic form \eqref{eq:leakage-quadratic} displays the mechanism.  A
kernel combination either misses the prescribed low Fourier coefficient, or
uses coefficients with nontrivial \(\mathcal A_\nu\).  In the second case,
\cref{lem:frame} forces comparable coefficient energy into other modes with
frequencies of order at most \(h^{-1}\).  The Mat\'ern multipliers on that
band turn this forced energy into squared \(L^2\) error of order
\([1+(\rho/h)^2]^{-2\tau}\), which is \((h/\rho)^{4\tau}\) when
\(h\leq\rho\).  The frame argument uses only separation and bounded mesh
ratio, so the leakage is not a lattice aliasing effect.

\begin{proof}[Proof of \cref{prop:target-scale-mode}]
Let \(\mathbf e^{(1)}:=(1,0,\ldots,0)\in\Z^d\) and take
\(\nu_\ell=\lceil\ell^{-1}\rceil\mathbf e^{(1)}\).  Then
\(1\leq\ell|\nu_\ell|\leq2\).  Denote its normalized real cosine and sine
modes by \(\psi_{\nu_\ell}^{\cos}\) and \(\psi_{\nu_\ell}^{\sin}\).  Since
the interpolation error operator is real and
\(e_{\nu_\ell}=(\psi_{\nu_\ell}^{\cos}
+i\psi_{\nu_\ell}^{\sin})/\sqrt2\),
\[
 2\bignorm{\mathcal I_{\rho,X_N}e_{\nu_\ell}-e_{\nu_\ell}}_2^2
 =\bignorm{\mathcal I_{\rho,X_N}\psi_{\nu_\ell}^{\cos}-\psi_{\nu_\ell}^{\cos}}_2^2
 +\bignorm{\mathcal I_{\rho,X_N}\psi_{\nu_\ell}^{\sin}-\psi_{\nu_\ell}^{\sin}}_2^2.
\]
Hence \eqref{eq:mode-leakage} forces at least one of those two real modes,
denoted by \(\psi_{\nu_\ell}\), to have squared interpolation error
\(\gtrsim[1+(\rho/h)^2]^{-2\tau}\).  The displayed scaling in
\cref{prop:target-scale-mode} puts that mode in
\(\cF_\beta(A,\ell)\), while
\([1+(2\pi\ell|\nu_\ell|)^2]^{-\beta}\asymp1\).  Since \(h\leq\rho\), this proves
\eqref{eq:target-scale-mode}.
\end{proof}

\begin{proof}[Proof of \cref{thm:interpolation}]
For the upper bound set \(\eta=0\), hence \(r=h\), in
\cref{lem:bias-main}.  If \(\ell\leq\rho\), the kernel term is dominated
because \(\beta\leq2\tau\); if \(\rho\leq\ell\),
\eqref{eq:short-bias-main} gives both terms.  The target lower bound is
\cref{lem:target-hole-main} with \(\sigma=0\).  The preceding result shows
that the kernel obstruction is not confined to constants; for the full kernel
lower bound, take \(\nu=0\) in \cref{lem:low-mode-leakage-main} and scale the
constant target by \(A\).  This proves \eqref{eq:interpolation-law}.
\end{proof}

\subsection{Proof of the KRR oracle theorem}\label{sec:proof-oracle}

\Cref{lem:fixed-target-bias-main,lem:ridge-shrinkage-main,lem:variance-lower-main}
give the target-scale bias, kernel-scale bias, and variance lower bounds;
\cref{lem:bias-main,lem:variance-upper-main} give the matching upper bounds.
The resolved fixed-ridge risk equivalence assembles them, after which optimizing the
effective resolution proves the oracle theorem.

Throughout this subsection,
\(r=\max\{h,\rho\eta^{1/(2\tau)}\}\) as in \eqref{eq:eta-r}; centered
noise gives \(\E_f\widehat f_{\lambda,\rho}=Q_\eta(f|_{X_N})\), the
operator form used in the bias estimates below.
For \(z,\zeta\in E_N\), write \(z\otimes_N\zeta\) for the rank-one map
\(w\mapsto\ip{w}{\zeta}_Nz\).
For \(z_k=e_k|_{X_N}\), the data-space Gram operator has the Fourier
decomposition
\begin{equation}\label{eq:gram-fourier-main}
 G_X=\sum_{k\in\Z^d}\bar\mu_{\rho,k}z_k\otimes_Nz_k.
\end{equation}

\begin{lemma}[Target-scale bias lower bound]\label{lem:fixed-target-bias-main}
For every \(\eta\geq0\), if \(r\leq\ell\), then
\begin{equation}\label{eq:fixed-target-bias-main}
 \sup_{f\in\cF_\beta(A,\ell)}
 \bignorm{f-Q_\eta(f|_{X_N})}_2^2
 \gtrsim A^2(r/\ell)^{2\beta}.
\end{equation}
\end{lemma}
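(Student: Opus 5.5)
We split according to which of the two quantities defining \(r=\max\{h,\rho\eta^{1/(2\tau)}\}\) is the larger. If \(r=h\), the target-resolution null function of \cref{lem:target-hole-main} already suffices. If \(r=\rho\eta^{1/(2\tau)}>h\), the ridge itself shrinks a single Fourier mode at frequency of order \(r^{-1}\), and we quantify this with a rank-one (Sherman--Morrison) resolvent calculation.

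\emph{Geometric case \(r=h\).} Take the real \(g\in\cF_\beta(A,\ell)\) from \cref{lem:target-hole-main}, which satisfies \(g|_{X_N}=0\). Then \(Q_\eta(g|_{X_N})=0\), so the error equals \(\norm g_2^2\gtrsim A^2\Gell\). Since \(h=r\leq\ell\), we have \(\Gell\asymp(h/\ell)^{2\beta}=(r/\ell)^{2\beta}\). This case includes \(\eta=0\).

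\emph{Ridge case \(r=\rho\eta^{1/(2\tau)}>h\), so \(\eta>0\).} Set \(\nu=\lceil r^{-1}\rceil\mathbf e^{(1)}\). Because \(r\leq\ell\leq1\), we have \(\nu\neq0\) and \(r^{-1}\leq|\nu|\leq2r^{-1}\). Put \(z_\nu=e_\nu|_{X_N}\), which satisfies \(\norm{z_\nu}_N=1\). From \(Q_\eta z=J(C_X+\eta I)^{-1}S^*z=JS^*(G_X+\eta I)^{-1}z\) and \((S^*w)_k^{\ft}=\bar\mu_{\rho,k}\ip{w}{z_k}_N\), the \(\nu\)-th Fourier coefficient of \(Q_\eta z_\nu\) is
\[
 \kappa:=\bar\mu_{\rho,\nu}\ip{(G_X+\eta I)^{-1}z_\nu}{z_\nu}_N .
\]
By \eqref{eq:gram-fourier-main}, \(G_X=B+\bar\mu_{\rho,\nu}z_\nu\otimes_Nz_\nu\) with \(B\geq0\). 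Let \(a:=\ip{(B+\eta I)^{-1}z_\nu}{z_\nu}_N\). Sherman--Morrison then gives
\[
 \kappa=\frac{\bar\mu_{\rho,\nu}a}{1+\bar\mu_{\rho,\nu}a},
 \qquad 0\leq a\leq\norm{z_\nu}_N^2/\eta=1/\eta,
\]
and since \(t\mapsto t/(1+t)\) is increasing, \(\kappa\leq\bar\mu_{\rho,\nu}/(\bar\mu_{\rho,\nu}+\eta)\).

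We now show \(\bar\mu_{\rho,\nu}\lesssim\eta\), which forces \(\kappa\leq1-c\).
\begin{itemize}
\item If \(r\leq\rho\), then \(\bar\mu_{\rho,\nu}\asymp(\rho|\nu|)^{-2\tau}\asymp(r/\rho)^{2\tau}=\eta\).
\item If \(r>\rho\), then \(\eta>1\geq\bar\mu_{\rho,\nu}\).
\end{itemize}
In both subcases \(\kappa\leq1-c\) for a constant \(c>0\). Hence \(|1-\kappa|\geq c\), and by Parseval
\[
 \norm{e_\nu-Q_\eta z_\nu}_2\geq|1-\kappa|\geq c .
\]

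To pass to a real target, note that \(Q_\eta\) maps real data to real functions. The cosine/sine splitting from the proof of \cref{prop:target-scale-mode} therefore yields a real unit mode \(\psi_\nu\) with \(\norm{\psi_\nu-Q_\eta(\psi_\nu|_{X_N})}_2^2\gtrsim1\). Set \(f=A[1+(2\pi\ell|\nu|)^2]^{-\beta/2}\psi_\nu\), which lies in \(\cF_\beta(A,\ell)\). Since \(\ell|\nu|\gtrsim\ell/r\geq1\), its squared error is
\[
 \gtrsim A^2[1+(2\pi\ell|\nu|)^2]^{-\beta}\asymp A^2(r/\ell)^{2\beta}.
\]

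The main step is the resolvent bound \(\kappa\leq1-c\). The point is that the off-\(\nu\) part \(B\) of the Gram operator can only enlarge the denominator, so no sampling or MZ estimate is needed there. The remaining concern is checking that \(\bar\mu_{\rho,\nu}\asymp\eta\) holds uniformly across the two subcases \(r\leq\rho\) and \(r>\rho\), which the bullet argument above handles.
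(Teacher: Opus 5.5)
Your proposal is correct and follows essentially the same argument as the paper. In the case \(r=h\) you use the null function of \cref{lem:target-hole-main}; otherwise you take the mode \(\nu=\lceil r^{-1}\rceil\mathbf e^{(1)}\), split \(G_X\) by a rank-one term with \(\bar\mu_{\rho,\nu}\lesssim\eta\) in both subcases \(r\leq\rho\) and \(r>\rho\), and finish with the real sine--cosine reduction and rescaling into \(\cF_\beta(A,\ell)\), exactly as the paper does.
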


\begin{proof}
If \(r=h\), use the target-hole function from
\cref{lem:target-hole-main}.  Here \(h=r\leq\ell\).  Its samples vanish,
so its KRR output is zero,
and its squared \(L^2\) norm is \(\gtrsim A^2(h/\ell)^{2\beta}\).

Suppose \(r>h\).  Then \(\eta>0\) and
\(r=\rho\eta^{1/(2\tau)}\).  Let
\(\nu=\lceil r^{-1}\rceil\mathbf e^{(1)}\).  Since \(r\leq\ell\leq1\),
\(|\nu|\asymp r^{-1}\) and \(\norm{z_\nu}_N=1\).  If \(r\leq\rho\), then
\(\rho|\nu|\asymp\rho/r\), so the Mat\'ern multiplier gives the following
bound; if \(r>\rho\), it follows
instead from \(\bar\mu_{\rho,\nu}\leq1\leq(r/\rho)^{2\tau}\).  Thus
\begin{equation}\label{eq:moving-mode-weight-main}
 \bar\mu_{\rho,\nu}\lesssim(r/\rho)^{2\tau}=\eta.
\end{equation}
Split \eqref{eq:gram-fourier-main} as
\[
 G_X=\bar\mu_{\rho,\nu}z_\nu\otimes_Nz_\nu+B_\nu,
 \qquad B_\nu\succeq0,
\]
and put
\(\zeta_\nu=\ip{z_\nu}{(B_\nu+\eta I)^{-1}z_\nu}_N\leq\eta^{-1}\).
Here \(B_\nu\) collects the contributions from all Fourier modes other than
\(\nu\).  Applying the rank-one resolvent formula to the preceding
decomposition gives
\[
\begin{aligned}
 (Q_\eta z_\nu)_\nu^{\ft}
 &=\bar\mu_{\rho,\nu}
   \ip{z_\nu}{(G_X+\eta I)^{-1}z_\nu}_N
   =\frac{\bar\mu_{\rho,\nu}\zeta_\nu}
          {1+\bar\mu_{\rho,\nu}\zeta_\nu},\\
 1-(Q_\eta z_\nu)_\nu^{\ft}
 &\geq\frac{\eta}{\eta+\bar\mu_{\rho,\nu}}\gtrsim1.
\end{aligned}
\]
By the same real sine--cosine identity used in the proof of
\cref{prop:target-scale-mode}, at least one normalized real mode
\(\psi_\nu\) at frequency \(\nu\) has squared bias \(\gtrsim1\).  Scaling it by
\(A[1+(2\pi\ell|\nu|)^2]^{-\beta/2}\) places it in
\(\cF_\beta(A,\ell)\); because \(|\nu|\asymp r^{-1}\), its squared
amplitude is \(\gtrsim A^2(r/\ell)^{2\beta}\).  This proves
\eqref{eq:fixed-target-bias-main}.
\end{proof}

\begin{lemma}[Kernel-scale bias lower bound]\label{lem:ridge-shrinkage-main}
For every \(\eta\geq0\),
\begin{equation}\label{eq:ridge-all-scale-main}
 \sup_{f\in\cF_\beta(A,\ell)}
 \bignorm{f-Q_\eta(f|_{X_N})}_2^2
 \gtrsim A^2\left\{
 [1+(\rho/h)^2]^{-2\tau}
 \vee\left(\frac{\eta}{1+\eta}\right)^2\right\}.
\end{equation}
Moreover, there is \(c_0>0\) such that, whenever
\(r\leq c_0\rho\),
\begin{equation}\label{eq:ridge-low-mode}
 \sup_{f\in\cF_\beta(A,\ell)}
 \bignorm{f-Q_\eta(f|_{X_N})}_2^2
 \gtrsim A^2(r/\rho)^{4\tau}.
\end{equation}
\end{lemma}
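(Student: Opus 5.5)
The plan is to test the recovery map on the constant function \(f_0:=A\). Its Fourier coefficients vanish except at \(k=0\), so \(\norm{f_0}_{\beta,\ell}=A\) for every \(\ell\), and \(f_0\in\cF_\beta(A,\ell)\). I will derive the two lower bounds in \eqref{eq:ridge-all-scale-main} separately from this one target and take their maximum. I will then deduce \eqref{eq:ridge-low-mode} by a case split on which term attains \(r\).

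\emph{Geometric term.} I first note that \(Q_\eta z=J(C_X+\eta I)^{-1}S^*z\) always lies in \(\operatorname{span}\{\bar K_{\rho,\tau}(\cdot-x_j)\}_{j=1}^N\). The reason is that \(S^*z=N^{-1}\sum_j z_j\bar K_{\rho,\tau}(\cdot-x_j)\), and this span is invariant under \((C_X+\eta I)^{-1}\): indeed \(C_X\) maps into the span, and \(C_X+\eta I\) is invertible on it (or, at \(\eta=0\), the Moore--Penrose solution lies in it). Hence \(\norm{f_0-Q_\eta(f_0|_{X_N})}_2^2\) is at least \(A^2\) times the squared distance from \(e_0\) to that span. \cref{lem:low-mode-leakage-main} with \(\nu=0\) bounds this distance below by \(A^2[1+(\rho/h)^2]^{-2\tau}\).

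\emph{Ridge term.} Next I repeat the rank-one resolvent computation from the proof of \cref{lem:fixed-target-bias-main}, now at frequency \(\nu=0\), where \(\bar\mu_{\rho,0}=1\) and \(z_0=\mathbf 1\) with \(\norm{z_0}_N=1\). I split \(G_X=z_0\otimes_N z_0+B_0\) with \(B_0\succeq0\), and set \(\zeta_0=\ip{z_0}{(B_0+\eta I)^{-1}z_0}_N\leq\eta^{-1}\). The same formula gives
\[
 (Q_\eta z_0)_0^{\ft}=\frac{\zeta_0}{1+\zeta_0},
 \qquad
 1-(Q_\eta z_0)_0^{\ft}=\frac1{1+\zeta_0}\geq\frac{\eta}{1+\eta}.
\]
The zeroth Fourier coefficient of the error \(f_0-Q_\eta(f_0|_{X_N})\) is therefore at least \(A\eta/(1+\eta)\) in modulus, and Parseval gives the second bound. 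For \(\eta=0\) this bound is trivial. Taking the maximum of the two bounds proves \eqref{eq:ridge-all-scale-main}.

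\emph{Resolved form.} I take \(c_0\leq1\) and assume \(r\leq c_0\rho\). If \(r=h\), then \(h\leq\rho\), so \([1+(\rho/h)^2]^{-2\tau}\geq2^{-2\tau}(h/\rho)^{4\tau}=2^{-2\tau}(r/\rho)^{4\tau}\). If \(r>h\), then \(r=\rho\eta^{1/(2\tau)}\), so \(\eta=(r/\rho)^{2\tau}\leq1\) and \((\eta/(1+\eta))^2\geq\eta^2/4=(r/\rho)^{4\tau}/4\). Either case gives \eqref{eq:ridge-low-mode}.

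The only step needing care is the membership \(Q_\eta z\in\operatorname{span}\{\bar K_{\rho,\tau}(\cdot-x_j)\}\) at \(\eta=0\) (the Moore--Penrose case). It follows because the minimum-norm interpolant is orthogonal, in \(\bar\cH_{\rho,\tau}\), to the functions vanishing on \(X_N\). Everything else is the already-established leakage estimate and the rank-one formula.
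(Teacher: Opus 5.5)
Your proposal is correct and follows the paper's proof essentially step for step. It uses the constant target; span membership of \(Q_\eta\mathbf1\) together with \cref{lem:low-mode-leakage-main} at \(\nu=0\) for the leakage term; the rank-one (Sherman--Morrison) split \(G_X=\mathbf1\otimes_N\mathbf1+B\), giving \(1-\theta_\eta\geq\eta/(1+\eta)\) for the ridge term; and a final comparison of both terms with \((r/\rho)^{4\tau}\) when \(r\leq c_0\rho\). Your explicit case split on whether \(r=h\) or \(r=\rho\eta^{1/(2\tau)}\) simply spells out the paper's remark that the maximum of the two terms is of order \((r/\rho)^{4\tau}\).
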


\begin{proof}
Let \(\mathbf1:=(1,\ldots,1)\in E_N\).  Decompose the normalized sampled
Gram operator as
\begin{equation}\label{eq:rank-one-split}
 G_X=\mathbf1\otimes_N\mathbf1+B,
 \qquad B\succeq0.
\end{equation}
For the constant target \(f\equiv1\) and \(\eta>0\), Sherman--Morrison shows that
the recovered constant Fourier coefficient is
\begin{equation}\label{eq:sherman}
 \theta_\eta=\frac{\zeta_\eta}{1+\zeta_\eta},
 \qquad
 \zeta_\eta=\ip{\mathbf1}{(B+\eta I)^{-1}\mathbf1}_N
 \leq\eta^{-1}.
\end{equation}
Therefore \(1-\theta_\eta\geq\eta/(1+\eta)\).  Scaling the data and target
by \(A\), the zero Fourier coefficient gives the ridge term in
\eqref{eq:ridge-all-scale-main}.  Since \(Q_\eta\mathbf1\) belongs to
\(\operatorname{span}\{\bar K_{\rho,\tau}(\cdot-x_j):1\leq j\leq N\}\),
\cref{lem:low-mode-leakage-main} with \(\nu=0\) gives the first, leakage
term in \eqref{eq:ridge-all-scale-main}, including \(\eta=0\).
If \(r\leq c_0\rho\), then
\([1+(\rho/h)^2]^{-2\tau}\asymp(h/\rho)^{4\tau}\) and
\((\eta/(1+\eta))^2\asymp\eta^2\); their maximum is
\((r/\rho)^{4\tau}\).  This proves \eqref{eq:ridge-low-mode}.
\end{proof}

\begin{lemma}[Variance upper bound]
\label{lem:variance-upper-main}
For every \(\eta\geq0\), if \(r\leq1\), then
\begin{equation}\label{eq:variance-upper-main}
 \E\norm{Q_\eta\xi}_2^2
 \lesssim\frac{\sigma^2}{Nr^d}.
\end{equation}
\end{lemma}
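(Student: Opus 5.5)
The plan is to compute the variance exactly as a trace and then split it into Fourier modes. Since \(\xi\) has covariance \((\sigma^2/N)I\) in \(E_N\),
\[
 \E\norm{Q_\eta\xi}_2^2=\frac{\sigma^2}{N}\tr(Q_\eta Q_\eta^*)
 =\frac{\sigma^2}{N}\sum_{k\in\Z^d}\bignorm{Q_\eta^*e_k}_N^2 ,
\]
where \(Q_\eta^*:L^2(\T^d)\to E_N\) is the adjoint for \(\ip{\cdot}{\cdot}_N\). So it suffices to show \(\sum_k\norm{Q_\eta^*e_k}_N^2\lesssim r^{-d}\).

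To identify the summands, I would write \(Q_\eta=JS^*(G_X+\eta I)^{-1}\) (push-through identity, with the Moore--Penrose reading at \(\eta=0\), legitimate since \(G_X\) is invertible by \cref{lem:reverse-gram-main}). The Fourier coefficient of \(S^*w\) is \(\bar\mu_{\rho,k}\ip{w}{z_k}_N\), with \(z_k=e_k|_{X_N}\). This gives
\[
 Q_\eta^*e_k=\bar\mu_{\rho,k}(G_X+\eta I)^{-1}z_k,
 \qquad \norm{z_k}_N=1 .
\]

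The low frequencies \(|k|\le r^{-1}\) are handled by stability alone. By \eqref{eq:data-stability-main}, \(\norm{Q_\eta^*}=\norm{Q_\eta}\lesssim1\) uniformly in \(\eta\), so \(\norm{Q_\eta^*e_k}_N\lesssim1\). There are \(\lesssim r^{-d}\) such frequencies because \(r\le1\), so their total contribution is \(\lesssim r^{-d}\).

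For the high frequencies \(|k|>r^{-1}\), I would use the crude resolvent bound
\[
 \norm{Q_\eta^*e_k}_N\le\frac{\bar\mu_{\rho,k}}{\lambda_{\min}(G_X)+\eta}.
\]
By \eqref{eq:gram-main}, \(\lambda_{\min}(G_X)\gtrsim\delta_{h,\rho}\asymp\min\{1,(h/\rho)^{2\tau}\}\). Since \(\eta=(\rho\eta^{1/(2\tau)}/\rho)^{2\tau}\) and \(r=\max\{h,\rho\eta^{1/(2\tau)}\}\), this gives
\[
 \lambda_{\min}(G_X)+\eta\gtrsim\min\{1,(r/\rho)^{2\tau}\}.
\]
The key claim is then \(\bar\mu_{\rho,k}/\min\{1,(r/\rho)^{2\tau}\}\lesssim(r|k|)^{-2\tau}\) for \(|k|>r^{-1}\), checked in three cases.
\begin{itemize}
\item If \(r\le\rho\), use \(\bar\mu_{\rho,k}\lesssim(\rho|k|)^{-2\tau}\); the ratio is \((\rho|k|)^{-2\tau}(\rho/r)^{2\tau}=(r|k|)^{-2\tau}\).
\item If \(r>\rho\) and \(\rho|k|\ge1\), the denominator is \(1\) and the same multiplier bound gives \((\rho|k|)^{-2\tau}\le(r|k|)^{-2\tau}\).
\item If \(r>\rho\) and \(\rho|k|<1\), use \(\bar\mu_{\rho,k}\le1\) together with \(|k|^{-1}>\rho\) to get \(1\le(r|k|)^{-2\tau}\cdot\) no loss, i.e.\ the bound \(1\le(\rho/r)^{-2\tau}\cdots\) reduces to \((r|k|)^{-2\tau}\ge(r/\rho)^{-2\tau}\) after accounting for the denominator.
\end{itemize}
The high-frequency sum is then \(\lesssim\sum_{|k|>1/r}(r|k|)^{-4\tau}\lesssim r^{-d}\), which converges because \(4\tau>2d>d\).

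I expect the main obstacle to be this high-frequency step: the factor \(\lambda_{\min}(G_X)+\eta\) must be converted into exactly \((r/\rho)^{2\tau}\) uniformly, including the regimes \(r>\rho\) and \(\eta=0\). Everything there rests on \cref{lem:reverse-gram-main} and the elementary case analysis above. The low-frequency step only needs the uniform stability of \cref{cor:recovery-stability-main}.
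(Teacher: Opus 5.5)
\textbf{There is a genuine gap, in the branch $r>\rho$.}

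Your route differs from the paper's, and its setup is sound. The trace identity, $Q_\eta^*e_k=\bar\mu_{\rho,k}(G_X+\eta I)^{-1}z_k$, the low-frequency step via \eqref{eq:data-stability-main}, and your first case $r\le\rho$ are all correct. The paper instead uses $J^*J\lesssim C_X$ on $\operatorname{ran}C_X$ to reduce to the effective dimension $\sum_j(t_j/(t_j+\eta))^2$, and bounds it with $A_\eta\preceq\min\{I,C_X/(4\eta)\}$ plus a Fourier cutoff.

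However, your key claim fails when $r>\rho$, and the lemma imposes no relation between $h$ and $\rho$. In your second case the inequality $(\rho|k|)^{-2\tau}\le(r|k|)^{-2\tau}$ is backwards, since $r|k|>\rho|k|$. In your third case $\bar\mu_{\rho,k}\asymp1$ while $(r|k|)^{-2\tau}<1$. So the claim cannot hold with the denominator $\min\{1,(r/\rho)^{2\tau}\}=1$ that you derived. It only closes if the denominator is $(r/\rho)^{2\tau}$, which your own bound does not give.

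\textbf{The sub-branch $r=\rho\eta^{1/(2\tau)}>\rho$ is repairable.} Do not truncate at $1$. Use $\lambda_{\min}(G_X)+\eta\ge\eta=(r/\rho)^{2\tau}$ together with $\bar\mu_{\rho,k}\le(\rho|k|)^{-2\tau}$ for all $k\neq0$. This gives $\norm{Q_\eta^*e_k}_N\le(r|k|)^{-2\tau}$ with no case split.

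\textbf{The sub-branch $r=h>\rho$ cannot be closed by your per-mode argument.} An example is interpolation ($\eta=0$) with a kernel shorter than the fill distance. Here \eqref{eq:gram-main} gives only $\lambda_{\min}(G_X)\gtrsim\delta_{h,\rho}\asymp1$. When $\rho\ll h$, each of the $\asymp\rho^{-d}$ frequencies with $h^{-1}<|k|\le\rho^{-1}$ then contributes $\bar\mu_{\rho,k}^2\asymp1$ to your upper bound. Your high-frequency estimate is therefore only of order $\rho^{-d}$, which is much larger than $h^{-d}=r^{-d}$.

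\textbf{The missing ingredient is the rank bound.} Since $\dim E_N=N$ and $\norm{Q_\eta}\lesssim1$ by \eqref{eq:data-stability-main}, you have $\norm{Q_\eta}_{\mathrm{HS}}^2\le N\norm{Q_\eta}^2\lesssim N\lesssim h^{-d}$. This settles the whole branch $r=h$; the paper uses exactly this ``sum is also at most $N$'' step. Your Fourier splitting is then needed only when $r=\rho\eta^{1/(2\tau)}>h$, where the corrected high-frequency bound above applies.

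In the regime $h\le r\le c_0\rho$ actually invoked in \cref{lem:resolved-krr-main}, your first case already suffices. The lemma as stated, however, requires the additional branch.
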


\begin{proof}
A Fourier-cutoff effective-dimension argument is given in
\cref{sec:sm-variance-upper}.
\end{proof}

\begin{lemma}[Variance lower bound]
\label{lem:variance-lower-main}
There is \(c_0>0\) such that, for every \(\eta\geq0\), if
\(h\leq\rho\leq1\) and \(r\leq c_0\rho\), then
\begin{equation}\label{eq:variance-lower}
 \norm{Q_\eta}_{\mathrm{HS}}^2\gtrsim r^{-d},
 \qquad
 \E\norm{Q_\eta\xi}_2^2
 \gtrsim\frac{\sigma^2}{Nr^d}.
\end{equation}
\end{lemma}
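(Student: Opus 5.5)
The second inequality in \eqref{eq:variance-lower} follows from the first. The noise \(\xi\) has covariance \((\sigma^2/N)I\) in \(E_N\), so \(\E\norm{Q_\eta\xi}_2^2=(\sigma^2/N)\norm{Q_\eta}_{\mathrm{HS}}^2\), where the Hilbert--Schmidt norm is taken between \(E_N\) (with \(\ip{\cdot}{\cdot}_N\)) and \(L^2(\T^d)\). The plan is therefore to prove \(\norm{Q_\eta}_{\mathrm{HS}}^2\gtrsim r^{-d}\). The idea is to show that \(Q_\eta\) nearly reproduces every Fourier mode with \(|k|\lesssim r^{-1}\), and that there are \(\asymp r^{-d}\) such modes.

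\emph{Step 1: reduction to single modes.} Since \(\{e_k\}\) is an orthonormal basis of \(L^2(\T^d)\),
\[
 \norm{Q_\eta}_{\mathrm{HS}}^2=\norm{Q_\eta^*}_{\mathrm{HS}}^2=\sum_{k\in\Z^d}\norm{Q_\eta^*e_k}_N^2 .
\]
With \(z_k=e_k|_{X_N}\) and \(\norm{z_k}_N=1\), Cauchy--Schwarz gives
\[
 |(Q_\eta z_k)_k^{\ft}|=|\ip{z_k}{Q_\eta^*e_k}_N|\leq\norm{Q_\eta^*e_k}_N .
\]
So it suffices to show \(|(Q_\eta z_k)_k^{\ft}|\geq1/2\) for all \(|k|\leq c_1/r\), for a suitable constant \(c_1\).

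\emph{Step 2: low modes are nearly reproduced.} Fix \(|k|\leq c_1/r\) and write \(e_k=\bar T_{\rho,\tau}v\) with \(v=\bar\mu_{\rho,k}^{-1}e_k\). Then
\[
 \norm v_2=(1+(2\pi\rho|k|)^2)^{\tau}\lesssim 1+(2\pi c_1\rho/r)^{2\tau}.
\]
Since \(h\leq\rho\), the last statement of \cref{cor:recovery-stability-main} applies and gives
\[
 \norm{e_k-Q_\eta z_k}_2\lesssim (r/\rho)^{2\tau}\norm v_2\lesssim (r/\rho)^{2\tau}+(2\pi c_1)^{2\tau}\leq c_0^{2\tau}+(2\pi c_1)^{2\tau}.
\]
Choosing \(c_1\) and then \(c_0\) small enough makes the right-hand side at most \(1/2\). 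Hence the \(k\)-th Fourier coefficient of \(Q_\eta z_k\) satisfies \(|(Q_\eta z_k)_k^{\ft}|\geq 1-\norm{e_k-Q_\eta z_k}_2\geq1/2\). This is the main step. The only delicate point is that the constant in the corollary is uniform in \(\eta\), so \(c_0,c_1\) can be fixed independently of \(\rho,h,\eta\).

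\emph{Step 3: counting.} Because \(r\leq c_0\rho\leq c_0\) is small, \(c_1/r\geq1\). Hence the ball \(\{|k|\leq c_1/r\}\) contains \(\gtrsim (c_1/r)^d\asymp r^{-d}\) lattice points. Summing the bound \(\norm{Q_\eta^*e_k}_N^2\geq1/4\) over these \(k\) yields \(\norm{Q_\eta}_{\mathrm{HS}}^2\gtrsim r^{-d}\), and the variance bound follows from the identity in the first paragraph. The case \(\eta=0\) (Moore--Penrose interpolation) is covered because the corollary holds uniformly for \(\eta\geq0\).
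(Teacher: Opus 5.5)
Your proof is correct, and its core matches the paper's. The final estimate of \cref{cor:recovery-stability-main}, applied with source \(v=\bar\mu_{\rho,k}^{-1}e_k\), shows that \(Q_\eta\) reproduces every Fourier mode with \(|k|\lesssim r^{-1}\) up to \(L^2\) error at most \(\tfrac12\), and there are \(\asymp r^{-d}\) such modes.

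The routes differ only in how this becomes a Hilbert--Schmidt bound. The paper works with the whole box \(V_K\) and proves the Loewner bound \(U_K^*U_K\succeq\tfrac14 I\) for \(U_K=Q_\eta S|_{V_K}\). It then passes from \(\norm{U_K}_{\mathrm{HS}}\) to \(\norm{Q_\eta}_{\mathrm{HS}}\) through the Hilbert--Schmidt ideal inequality. That step needs the upper Marcinkiewicz--Zygmund bound \(\norm{S|_{V_K}}_{\mathrm{op}}\lesssim1\), and hence \(Kh\leq c_{\mathrm{MZ}}\), which uses \(r\geq h\).

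You instead expand \(\norm{Q_\eta^*}_{\mathrm{HS}}^2\) in the orthonormal basis \(\{e_k\}\). You bound each diagonal term below by the single coefficient \((Q_\eta z_k)_k^{\ft}=\ip{z_k}{Q_\eta^*e_k}_N\). The exact identity \(\norm{z_k}_N=1\) then replaces both the MZ upper bound and the ideal inequality.

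Your route is therefore slightly more elementary at this stage: it needs only mode-by-mode reproduction and the unimodularity of exponentials at the design points. MZ still enters, but only indirectly through the reverse-sampling input to the corollary. The paper's version gives the stronger subspace statement that \(Q_\eta S\) is uniformly coercive on all of the \(r^{-d}\)-dimensional space \(V_K\). That is the interpretation it records after the lemma.
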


\begin{proof}
Let \(V_K=\operatorname{span}\{e_k:\norm k_\infty\leq K\}\), whose
frequencies satisfy \(|k|\leq\sqrt d\,K\), and take
\(K=\lfloor\kappa/r\rfloor\), where \(0<\kappa<c_{\mathrm{MZ}}\) is fixed
and sufficiently small.  We show that \(Q_\eta S\) transmits this
low-frequency space with uniformly positive gain.  Fix \(p\in V_K\) and
write \(p=\bar T_{\rho,\tau}v\).  Fourier diagonalization and the definition
of \(r\) give, respectively,
\[
 \norm v_2\leq[1+C_d(\rho K)^2]^\tau\norm p_2,
 \qquad
 \max\{\delta_{h,\rho},\eta\}\lesssim(r/\rho)^{2\tau}.
\]
Here the first bound uses \(|k|\leq\sqrt d\,K\), and the second uses
\(h\leq\rho\).  Combining these bounds with the source-resolvent estimate
\eqref{eq:source-resolvent-main} and \(K\leq\kappa/r\) yields
\[
 \bignorm{p-Q_\eta(p|_{X_N})}_2
 \lesssim (r/\rho)^{2\tau}
 [1+C_d(\rho\kappa/r)^2]^\tau\norm p_2
 \lesssim [(r/\rho)^2+C_d\kappa^2]^\tau\norm p_2.
\]
Choose first \(\kappa\) and then \(c_0\leq\min\{\kappa/2,1\}\) so that the
right-hand side is at most \(\frac12\norm p_2\) whenever \(r\leq c_0\rho\).
Then \(K\asymp r^{-1}\) and \(\dim(V_K)\asymp r^{-d}\).
Endow \(V_K\) with its \(L^2\) inner product and define
\(U_K:=Q_\eta S|_{V_K}:V_K\to L^2(\T^d)\).  The preceding error bound
implies
\begin{align*}
 \norm{U_Kp}_2&\geq\tfrac12\norm p_2
       &&\text{for every }p\in V_K,\\
 U_K^*U_K&\succeq\tfrac14I_{V_K},
 &\qquad
 \norm{U_K}_{\mathrm{HS}}^2
       &=\tr(U_K^*U_K)
         \geq\tfrac14\dim(V_K)\asymp r^{-d}.
\end{align*}
Since \(r\geq h\), we have \(Kh\leq\kappa<c_{\mathrm{MZ}}\), and the upper
bound in \eqref{eq:mz} gives \(\norm{S|_{V_K}}_{\mathrm{op}}\lesssim1\).
The Hilbert--Schmidt ideal inequality \cite[Thm.~2.7(a)]{simon2005trace}
therefore gives
\[
 r^{-d/2}\lesssim\norm{U_K}_{\mathrm{HS}}
 \leq\norm{Q_\eta}_{\mathrm{HS}}
      \norm{S|_{V_K}}_{\mathrm{op}}
 \lesssim\norm{Q_\eta}_{\mathrm{HS}}.
\]
This proves \(\norm{Q_\eta}_{\mathrm{HS}}^2\gtrsim r^{-d}\).  Finally,
\(\E\norm{Q_\eta\xi}_2^2=(\sigma^2/N)\norm{Q_\eta}_{\mathrm{HS}}^2
\gtrsim\sigma^2/(Nr^d)\), proving both assertions in
\eqref{eq:variance-lower}.
\end{proof}

Thus KRR transmits an \(r^{-d}\)-dimensional subspace with uniformly positive
gain, matching the variance upper bound in \cref{lem:variance-upper-main}.

\begin{lemma}[Resolved fixed-ridge risk equivalence]\label{lem:resolved-krr-main}
There is \(c_0>0\) such that, for every \(\eta\geq0\),
with \(\lambda=\rho^d\eta\), if
\(r\leq\min\{\ell,c_0\rho\}\), then
\begin{equation}\label{eq:resolved-fixed-ridge-main}
 \sup_{f\in\cF_\beta(A,\ell)}
 \E_f\bignorm{\widehat f_{\lambda,\rho}-f}_2^2
 \asymp
 A^2(r/\ell)^{2\beta}
 +A^2(r/\rho)^{4\tau}
 +\frac{\sigma^2}{Nr^d}.
\end{equation}
\end{lemma}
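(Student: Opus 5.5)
The plan is to split the risk with the bias--variance identity and then assemble bounds already proved. Since the noise is centered and \(\widehat f_{\lambda,\rho}=Q_\eta(f|_{X_N}+\xi)\), we have \(\E_f\widehat f_{\lambda,\rho}=Q_\eta(f|_{X_N})\). This gives, for each \(f\),
\[
 \E_f\bignorm{\widehat f_{\lambda,\rho}-f}_2^2
 =\bignorm{f-Q_\eta(f|_{X_N})}_2^2+\E\norm{Q_\eta\xi}_2^2 .
\]
The variance term does not depend on \(f\), so the supremum over \(\cF_\beta(A,\ell)\) equals the worst-case squared bias plus the variance. It therefore suffices to show two equivalences up to constants: the worst-case squared bias is equivalent to \(A^2(r/\ell)^{2\beta}+A^2(r/\rho)^{4\tau}\), and the variance is equivalent to \(\sigma^2/(Nr^d)\). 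I take \(c_0\) to be the minimum of the constants from \cref{lem:ridge-shrinkage-main,lem:variance-lower-main}, and also at most \(1\). Then \(r\leq c_0\rho\) implies \(h\leq r\leq\rho\leq1\), and \(r\leq\min\{\ell,\rho\}\).

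\textbf{Upper bounds.} The hypotheses of \cref{lem:bias-main} hold. If \(\rho\leq\ell\), \eqref{eq:short-bias-main} gives squared bias \(\lesssim A^2\{(r/\ell)^{2\beta}+(r/\rho)^{4\tau}\}\). If \(\ell\leq\rho\), \eqref{eq:long-bias-main} gives \(\lesssim A^2(r/\ell)^{2\beta}\), which is already dominated by the claimed sum. For the variance, \(r\leq1\) holds, so \cref{lem:variance-upper-main} gives \(\E\norm{Q_\eta\xi}_2^2\lesssim\sigma^2/(Nr^d)\).

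\textbf{Lower bounds.} Since a sum of three nonnegative terms is comparable to their maximum, it is enough to bound the risk below by each term separately. Because \(r\leq\ell\), \cref{lem:fixed-target-bias-main} gives worst-case squared bias \(\gtrsim A^2(r/\ell)^{2\beta}\). Because \(r\leq c_0\rho\), \eqref{eq:ridge-low-mode} gives worst-case squared bias \(\gtrsim A^2(r/\rho)^{4\tau}\). Because \(h\leq\rho\leq1\) and \(r\leq c_0\rho\), \cref{lem:variance-lower-main} gives variance \(\gtrsim\sigma^2/(Nr^d)\). The variance is present for every \(f\), so each of the three lower bounds transfers to the supremum of the risk.

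\textbf{Main point to check.} The one place needing care is that a single \(c_0\) works for all the cited lemmas at once. Each lemma's lower bound remains valid when its threshold on \(r/\rho\) is decreased, since only the range of admissible \(r\) shrinks. So taking the minimum of the constants is legitimate. One must also confirm that \(h\leq\rho\) follows from \(h\leq r\leq c_0\rho\), which holds because \(c_0\leq1\). Everything else is bookkeeping of the three terms.
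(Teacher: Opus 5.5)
Your proposal is correct and takes the same approach as the paper: the bias--variance identity, upper bounds from \cref{lem:bias-main} and \cref{lem:variance-upper-main}, and lower bounds from \cref{lem:fixed-target-bias-main}, \eqref{eq:ridge-low-mode}, and \cref{lem:variance-lower-main}, with one \(c_0\) obtained by shrinking the constants. Your explicit check that \(c_0\leq1\) gives \(h\leq r\leq\rho\leq1\) supplies the hypotheses of the cited lemmas, a detail the paper leaves implicit.
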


\begin{proof}
Decrease \(c_0\), if necessary, so that the local conclusions of
\cref{lem:ridge-shrinkage-main,lem:variance-lower-main} hold with the same
constant.  For the upper bound, add the variance estimate
\eqref{eq:variance-upper-main} to the appropriate bias estimate in
\cref{lem:bias-main}.  If \(\ell\leq\rho\), then
\((r/\rho)^{4\tau}\leq(r/\ell)^{2\beta}\), so the displayed kernel term is
already dominated by the target bound for \(\ell\leq\rho\).

For the lower bound, linearity and centered noise give
\[
 \sup_{f\in\cF_\beta(A,\ell)}
 \E_f\bignorm{\widehat f_{\lambda,\rho}-f}_2^2
 =\sup_{f\in\cF_\beta(A,\ell)}
 \bignorm{f-Q_\eta(f|_{X_N})}_2^2
 +\E\norm{Q_\eta\xi}_2^2.
\]
The target and kernel bias lower bounds are
\eqref{eq:fixed-target-bias-main} and \eqref{eq:ridge-low-mode}.  Their targets
may differ, but the supremum is at least half the sum of the two bounds; adding
\eqref{eq:variance-lower} proves \eqref{eq:resolved-fixed-ridge-main}.
\end{proof}

\begin{proof}[Proof of \cref{thm:oracle}]
The fixed-ridge assertion \eqref{eq:fixed-ridge-law} is
\cref{lem:resolved-krr-main}.  It remains to optimize over ridge and handle
the remaining cases in which a geometric or information term is of order one.

\emph{Lower bound.}
The minimax theorem gives
a lower bound \(\gtrsim A^2(\Gell\vee\Sell)\) for every estimator.  If
\(\rho\geq\ell\), these
terms also dominate \(\Grho\vee\Srho\) by
\eqref{eq:overscaled-domination}.  Suppose therefore that \(\rho\leq\ell\).
The kernel-scale bias bound \eqref{eq:ridge-all-scale-main} gives a lower
bound \(\gtrsim A^2\Grho\) for every ridge parameter.  If both \(h/\rho\) and
\(\eta^{1/(2\tau)}\) are below a sufficiently small constant, then
\eqref{eq:resolved-fixed-ridge-main} gives
\begin{equation}\label{eq:kernel-objective-lower-main}
 \sup_{f\in\cF_\beta(A,\ell)}
 \E_f\bignorm{\widehat f_{\lambda,\rho}-f}_2^2
 \gtrsim A^2(r/\rho)^{4\tau}
 +\frac{\sigma^2}{Nr^d}.
\end{equation}
Using \(\sigma^2/(Nr^d)=A^2\Irho^{-1}(\rho/r)^d\), the two terms balance at
\(r_\rho=\rho\Irho^{-1/(4\tau+d)}\), where both have order
\(A^2\Irho^{-4\tau/(4\tau+d)}\).  If \(r_\rho<h\), the constraint \(r\geq h\)
instead yields \(A^2(h/\rho)^{4\tau}\).  If \(r_\rho>c_0\rho\), with
\(c_0\) as in \cref{lem:resolved-krr-main}, then
\(\Irho<c_0^{-(4\tau+d)}\), so \(\Srho\) is bounded below by a fixed positive
constant.  For \(r\leq c_0\rho\), the variance term in
\eqref{eq:kernel-objective-lower-main} satisfies
\[
 \frac{\sigma^2}{Nr^d}
 =A^2\Irho^{-1}(\rho/r)^d\gtrsim A^2.
\]
If \(r>c_0\rho\), then either \(h>c_0\rho\) or
\(\eta^{1/(2\tau)}>c_0\); the leakage or ridge-shrinkage term in
\eqref{eq:ridge-all-scale-main} is then bounded below by a fixed positive
constant.  Thus every ridge parameter has risk \(\gtrsim A^2\Srho\).  In
all cases the kernel terms therefore contribute
\(\gtrsim A^2(\Grho\vee\Srho)\).
This proves all four oracle lower terms.

\emph{Upper bound.}
Let \(c_0\leq1\) be the constant in \cref{lem:resolved-krr-main}, decreased
if necessary, and choose \(\varepsilon_0>0\) sufficiently small in terms of
\(c_0\).  If \(\Phior>\varepsilon_0\), the zero estimator has risk at most
\(A^2\lesssim A^2\Phior\).  It therefore remains only to consider
\(\Phior\leq\varepsilon_0\).  The definitions of the four terms then imply
\[
 h\leq c_0\min\{\ell,\rho\},
 \qquad r_\ell\leq c_0\ell,
 \qquad r_\rho\leq c_0\rho,
 \qquad \Iell,\Irho\geq1.
\]
The definition of \(r_\star\) and the preceding bounds give
\[
 r_\star=\max\{h,\min(r_\ell,r_\rho)\}
 \leq c_0\min\{\ell,\rho\}
 \leq\min\{\ell,c_0\rho\},
\]
so \eqref{eq:resolved-fixed-ridge-main} applies to \(\lambda_\star\), for
which \(r=r_\star\).

If \(\rho\leq\ell\), choose \(r=r_\star\) and
\(\lambda=\lambda_\star\) in \eqref{eq:resolved-fixed-ridge-main}.  Put
\(r_0=\min\{r_\ell,r_\rho\}\).  At \(r_0\), the bias associated with the
smaller balance scale equals the variance, while the other bias is no larger.
If \(h\leq r_0\), all three terms are therefore bounded by
\(A^2(\Sell\vee\Srho)\).  If \(h>r_0\), replacing \(r_0\) by
\(r_\star=h\) decreases the variance, while the two biases are bounded by
\(A^2(\Gell\vee\Grho)\).  Hence the right-hand side is
\(\lesssim A^2\Phior\), proving the branch \(\rho\leq\ell\).

If \(\rho\geq\ell\), the kernel terms satisfy
\(\Grho\leq\Gell\) and \(\Srho\leq\Sell\) by
\eqref{eq:overscaled-domination}.  If \(\sigma=0\), then \(r_\star=h\) by
definition.  If \(\sigma>0\), the two balance scales obey
\[
 \frac{r_\rho}{r_\ell}
 =\left(\frac{\rho}{\ell}\right)^{4\tau/(4\tau+d)}
 \Iell^{(4\tau-2\beta)/((2\beta+d)(4\tau+d))}
 \geq1,
\]
where the last inequality uses \(\Iell\geq1\) and \(\beta\leq2\tau\).
Hence in both cases the unified choice is
\(r_\star=h\vee r_\ell\).  Applying
\eqref{eq:resolved-fixed-ridge-main} and using
\((r_\star/\rho)^{4\tau}\leq(r_\star/\ell)^{2\beta}\), which follows from
\(r_\star\leq\ell\leq\rho\) and \(\beta\leq2\tau\), gives
\[
 \sup_{f\in\cF_\beta(A,\ell)}
 \E_f\bignorm{\widehat f_{\lambda_\star,\rho}-f}_2^2
 \lesssim A^2(r_\star/\ell)^{2\beta}
 +\frac{\sigma^2}{Nr_\star^d}
 \lesssim A^2(\Gell\vee\Sell).
\]
Thus the same tuning proves the branch \(\rho\geq\ell\) without a
kernel-dependent statistical penalty and completes the proof.
\end{proof}

\section{Numerical experiments}\label{sec:numerics}

We use deterministic finite-section calculations to test the predicted phase
structure without selecting favorable targets or simulating noise.  Here a
finite section means restricting the target and kernel Fourier expansions to
finitely many modes.  It is a spectral truncation of the infinite-dimensional
function space, distinct from the finite sample size \(N\) and from a spatial
grid discretization.  The principal experiments take \(d=1\), \(N=64\),
\(A=1\), and \(\beta=\tau=2\).  We compare
the regular lattice with the bounded jitter
\[
 x_i=(i+1/2+\epsilon_i)/N \pmod 1,
 \qquad \epsilon_i\sim\operatorname{Unif}[-0.2,0.2],
\]
using one fixed realization throughout.  Their measured mesh ratios are \(1\)
and \(2.14\), respectively, computed with the periodic torus distance.  All
dimensionless parameters below use the measured fill distance of the
corresponding design.

The experiments examine the interpolation and KRR consequences separately.
\Cref{fig:interpolation-numerics} isolates the kernel-resolution term in
interpolation, while \cref{fig:krr-numerics} compares the grid-refined oracle
with the four-term envelope, evaluates the tuning obtained from the
fixed-ridge balance, and contrasts the oracle-risk plateau for
\(\rho\geq\ell\) with numerical conditioning.  A focused \(d=2\) experiment
testing the dimension dependence of the two information exponents is reported
in \cref{sec:sm-dimension-numerics}.

\paragraph{Finite-section calculation}
On each finite Fourier section, we construct the KRR map exactly.  Its squared
bias is a quadratic form in the target coefficients, so the worst bias over
the truncated target ball is the largest generalized eigenvalue relative to
the target Sobolev weight.  The variance is the corresponding exact trace, so
no target is selected and no noise is simulated.  Enlarging all cutoffs by a
factor \(1.5\) changes selected risks by at most \(0.4\%\) and the largest
reported condition numbers by at most \(0.25\%\).  Taking \(K_T\geq2/h\)
changes a \(\Gell\)-dominant oracle risk by at most \(0.08\%\) and the plateau
in \cref{fig:interpolation-numerics}(b) by at most \(0.98\%\) on both designs.
For \(d=1\), let \(G_{X,K}\) denote the normalized Gram matrix at cutoff
\(K\), and let \(\kappa_2\) denote its spectral condition number.
The cutoffs and numerical-reliability criterion are
\begin{equation}\label{eq:numerical-cutoffs}
 \begin{gathered}
 K_T=\max\{8,\lceil5/\ell\rceil\},\qquad
 K=\max\{K_T,2N,\lceil8/\rho\rceil\},\\
 \kappa_2(G_{X,K}+\eta I)\epsilon_{\mathrm{mach}}\leq10^{-8}.
 \end{gathered}
\end{equation}
Target modes use \(|k|\leq K_T\), while Gram and recovery modes use
\(|k|\leq K\).  With \(\epsilon_{\mathrm{mach}}\approx2.2\times10^{-16}\),
we retain only solves satisfying \eqref{eq:numerical-cutoffs}, add no nugget,
and write \(\eta_{\mathrm{or}}\) for the grid-refined minimizer below.

For KRR, we minimize the sum of the computed bias and variance over a fixed
grid in \(\log_{10}\eta\), with spacing refined to \(0.05\) near each minimum,
and include both \(\eta=0\) and the zero estimator.  No selected positive-ridge
minimum lies at an outer reliable-grid boundary.  We also evaluate the choice
in \eqref{eq:balance-scales}--\eqref{eq:oracle-tuning}.

\paragraph{Interpolation transition}
The main interpolation experiment fixes \(\ell/h=64\) and takes
\(\rho/h\in\{1,\sqrt2,2,2\sqrt2,4,4\sqrt2,8\}\).  In
\cref{fig:interpolation-numerics}(a), multiplying \(\rho/h\) by \(\sqrt2\)
reduces the error by a factor approaching \(16\) on both designs, in agreement
with the exponent \(4\tau=8\).  At \(\ell/h=8\), the dashed vertical line in
\cref{fig:interpolation-numerics}(b) marks where the kernel-resolution term
\((h/\rho)^{4\tau}\) equals the target-resolution term
\((h/\ell)^{2\beta}\), namely
\(\rho/\ell=(\ell/h)^{\beta/(2\tau)-1}=8^{-1/2}\).  To the left, the kernel
term is larger and the error decreases as \(\rho\) increases; to the right,
the target term is larger, so further increasing \(\rho\) does not improve
the predicted order and the curves flatten.  Comparison constants may shift
the observed transition.

\begin{figure}[!h]
\centering
\includegraphics[width=\textwidth]{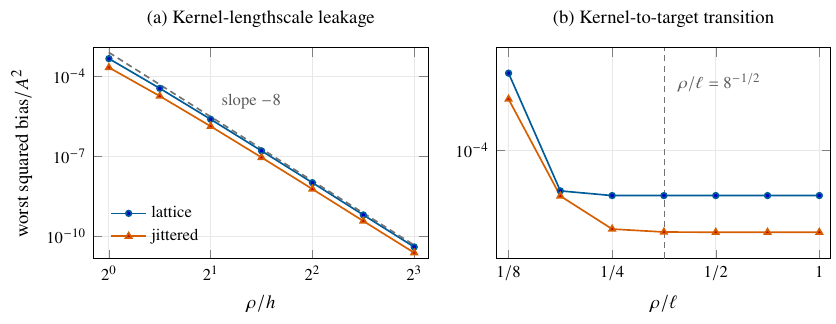}
\caption{Finite-section worst-class Mat\'ern interpolation error for \(d=1\)
and \(\beta=\tau=2\).  (a) Kernel-lengthscale dependence at \(\ell/h=64\),
with dashed slope \(-8\).  (b) Transition at \(\ell/h=8\); the vertical line
equates the two resolution contributions.  Circles and triangles denote the
lattice and jittered designs.}
\label{fig:interpolation-numerics}
\end{figure}

\paragraph{Four-term KRR oracle}
We use
\[
 \ell/h\in\{4,8,16\},\qquad
 \rho/\ell=2^{j/2},\quad -6\leq j\leq4,
 \qquad
 \Iell\in\{16,64,256,1024,4096\},
\]
with \(\sigma^2=N\ell/\Iell\).  \Cref{fig:oracle-phase-diagram} shows the
dominant term for all \(330\) parameter--design combinations, with every term
active somewhere.  In \cref{fig:krr-numerics}(a), the ratio of grid-refined
risk to \(A^2\Phior\) lies
in \(0.219\)--\(0.267\) for \(\Gell\), \(0.016\)--\(0.304\) for \(\Grho\),
\(0.374\)--\(0.491\) for \(\Sell\), and \(0.294\)--\(0.406\) for \(\Srho\).
All \(330\) selected minima are reliable; fixed-ridge tuning is within a factor
\(2.91\) of the grid-refined oracle.

At \(\ell/h=8\) and \(\Iell=256\), increasing \(\rho/\ell\) from
\(2^{-1/2}\) to \(4\) changes the lattice oracle risk by less than \(4.3\%\)
but raises \(\kappa_2(G_{X,K})\) from \(3.15\times10^3\) to
\(3.16\times10^6\) and the oracle-shifted condition number from about \(70\)
to \(5.53\times10^4\); see \cref{fig:krr-numerics}(b)--(c).  This separates
statistical accuracy from translate-basis conditioning.  For
\(\rho/\ell\geq1\), the flat risk agrees with
\eqref{eq:overscaled-domination}; \(\Risk^\star\) is not computed.

\begin{figure}[!htb]
\centering
\includegraphics[width=.96\textwidth]{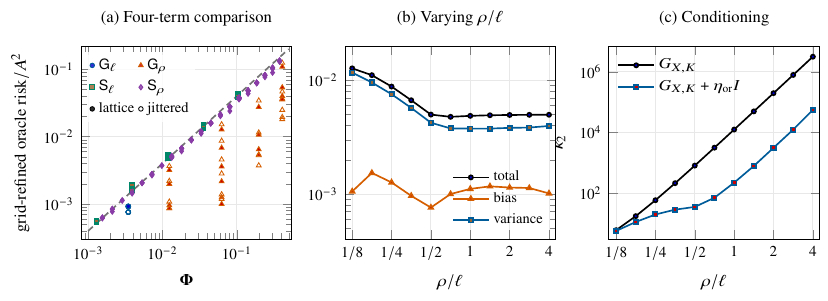}
\caption{Four-term KRR oracle and conditioning for \(d=1\) and
\(\beta=\tau=2\).  (a) Grid-refined oracle risk versus \(\Phior\); shape
identifies the dominant term and fill the design, with a slope-one guide.
(b) Oracle risk, bias, and variance at \(\ell/h=8\), \(\Iell=256\).
(c) Corresponding Gram and oracle-shifted condition numbers.}
\label{fig:krr-numerics}
\end{figure}

\paragraph{Statistical accuracy versus conditioning}
The harmlessness of \(\rho\geq\ell\) is a statistical statement in exact
arithmetic.  Because large \(\rho/h\) can make the kernel matrix severely
ill-conditioned, stable linear algebra or preconditioning may be needed to
realize the theoretical risk.  This flat-kernel regime is studied in
\cite{barthelmeusevich2021flat}; we do not claim a scale-uniform condition
number or analyze a particular preconditioner.

\paragraph{Code and data availability}
The complete repository is available at
\href{https://github.com/sanzalonso/matern-lengthscale-resolution}
{\nolinkurl{github.com/}}\linebreak
\href{https://github.com/sanzalonso/matern-lengthscale-resolution}
{\nolinkurl{sanzalonso/matern-lengthscale-resolution}}.\par\smallskip

\FloatBarrier
\section{Discussion}\label{sec:discussion}

\paragraph{Resolution before smoothness}
\Cref{thm:minimax,thm:interpolation,thm:oracle} identify a hierarchy of
effects: \(h/\ell,h/\rho\) govern geometric resolution, \(\Iell,\Irho\)
govern information, and only after these thresholds do \(\beta\) and \(\tau\)
govern decay.  The target terms \(\Gell,\Sell\) are unavoidable, whereas
\(\Grho,\Srho\) quantify the additional cost of the chosen kernel.  The
comparison is asymmetric: when \(\rho\geq\ell\) and \(\beta\leq2\tau\),
optimally tuned KRR attains minimax order; when \(\rho<\ell\), kernel-scale
geometry or information creates an additional barrier, while longer kernels
may remain poorly conditioned; see \cref{fig:krr-numerics}(c).

\paragraph{Scope}
The torus removes boundary issues but retains scattered bounded-mesh-ratio
designs; bounded domains or non-quasi-uniform sets require boundary-adapted
spaces or local resolution.  Replacing \(q\) by \(h\) costs factors
\(\bar\gamma^{2\tau}\) and \(\bar\gamma^{4\tau}\) in the Gram and leakage
bounds.  The condition \(\beta\leq2\tau\) controls the long-kernel source
multiplier; for \(\beta>2\tau\), ordinary smoothness saturation must be tracked.
Anisotropy, nonstationarity, other kernels, data-driven tuning, and other noise
models are outside scope.

\paragraph{Main takeaway}
The four terms separate target limits from kernel costs: resolution and
information determine whether error can decrease, and smoothness its rate.

\newpage
\section*{Acknowledgments}
The author was partly funded by NSF CAREER Award DMS-2237628.  The author used
OpenAI Codex to assist with proof auditing
and numerical code development.  The author assumes responsibility for all
content.

\bibliographystyle{siamplain}
\bibliography{resolution_matern_references}

\clearpage
\renewcommand{\siampretitle}{Supplementary Materials: }
\renewcommand{\siamprelabel}{SM}
\renewcommand{\thepage}{\siamprelabel\arabic{page}}
\setcounter{page}{1}
\setcounter{section}{0}
\setcounter{subsection}{0}
\setcounter{subsubsection}{0}
\setcounter{paragraph}{0}
\setcounter{subparagraph}{0}
\setcounter{equation}{0}
\setcounter{theorem}{0}
\setcounter{figure}{0}
\setcounter{table}{0}
\def\theHsection{SM.\arabic{section}}
\def\theHsubsection{SM.\arabic{section}.\arabic{subsection}}
\def\theHsubsubsection{SM.\arabic{section}.\arabic{subsection}.\arabic{subsubsection}}
\def\theHequation{SM.\arabic{section}.\arabic{equation}}
\def\theHtheorem{SM.\arabic{section}.\arabic{theorem}}
\def\theHfigure{SM.\arabic{figure}}
\def\theHtable{SM.\arabic{table}}
\def\theHfootnote{SM.\arabic{footnote}}
\headers{Non-asymptotic Mat\'ern Regression}{D. Sanz-Alonso}
\pdfbookmark[0]{Supplementary Materials}{supplementary-materials}
\makeatletter
\let\maketitle\combinedsavedmaketitle
\let\@maketitle\combinedsavedatmaketitle
\let\thanks\combinedsavedthanks
\makeatother
\title{Non-asymptotic Analysis of Mat\'ern Regression:\\
The Roles of Target and Kernel Lengthscales}
\author{Daniel Sanz-Alonso\thanks{\raggedright Department of Statistics,
University of Chicago, Chicago, IL 60637 USA.\\
\email{sanzalonso@uchicago.edu}.}}
\renewcommand{\siampretitle}{Supplementary Materials\\}
\maketitle
\renewcommand{\siampretitle}{Supplementary Materials: }

We use the notation and standing assumptions of
\cref{sec:setting,sec:proof-prelim} in the main paper.  In particular,
\(h,q,\gamma,\bar\gamma,E_N,\norm{\cdot}_N,V_K\), the normalized Mat\'ern
objects, \(J,S,C_X,G_X,Q_\eta,\eta,r\), and \(\delta_{h,\rho}\) retain their
meanings there.  Unless noted, constants implicit in \(\lesssim\),
\(\gtrsim\), and \(\asymp\) have the dependence stated in the main-paper
result being proved or in the supplementary result stated below.  For
functions on \(\mathbb R^d\), we use the Fourier convention
\(u^{\ft}(\xi)=\int_{\mathbb R^d}u(x)e^{-2\pi i\xi\cdot x}\dd x\).

\section{Proofs of auxiliary estimates}
\label{sec:sm-auxiliary}

We prove the three preliminary results from the main paper in order.
Quasi-uniform geometry yields the sampling and trace estimates, which imply
reverse sampling.  Reverse sampling gives the source-resolvent estimate and,
together with the intrinsic Gram bound, recovery stability.  We then prove
the variance upper bound used in the KRR argument.

\begin{proof}[Proof of \Cref{lem:sampling-main}]
First decrease the standing \(h_0\), if necessary, so that \(2h_0\) is below
the injectivity radius; each \(B(x,2h_0)\) then admits a Euclidean coordinate
lift with uniform constants.
Fix \(s\in\{\beta,\tau\}\).  Standard packing and covering estimates for a
bounded-mesh-ratio design give \(h/\bar\gamma\leq q\leq h\),
\(N^{-1}\lesssim q^d\), and bounded overlap of the balls
\(B(x_i,2h)\).  If \(V_i\) is the periodic Voronoi cell of \(x_i\) and
\(w_i:=|V_i|\), the same estimates and the inclusions
\(B(x_i,q)\subset V_i\subset B(x_i,h)\) give
\begin{equation}\label{eq:sm-voronoi-weights}
 w_i\asymp N^{-1}.
\end{equation}

Write \(s=m+\theta\), where \(m\in\mathbb N_0\) and
\(0\leq\theta<1\).  Applying the scaled Sobolev embedding
\(H^s(B_i)\hookrightarrow C(B_i)\) on the disjoint balls
\(B_i:=B(x_i,q/2)\) gives
\begin{align*}
 q^d|u(x_i)|^2&\lesssim\norm u_{L^2(B_i)}^2
 +\sum_{j=1}^m q^{2j}\sum_{|\alpha|=j}
   \norm{D^\alpha u}_{L^2(B_i)}^2\\
 &\quad+q^{2s}
   \sum_{|\alpha|=m}[D^\alpha u]_{H^\theta(B_i)}^2.
\end{align*}
Here and below, the last term is omitted when \(\theta=0\).
After summation, the local fractional seminorms are bounded by their global
counterparts.  The Fourier inequality
\(q^{2j}|k|^{2j}\lesssim1+q^{2s}|k|^{2s}\), together with
\(N^{-1}\lesssim q^d\), proves \eqref{eq:trace}.

For the MZ inequality, let \(p\in V_K\), define the Voronoi extension
\(\mathcal E_Xp\) to equal \(p(x_i)\) on \(V_i\), and fix an integer
\(s_0>d/2\).  The point-anchored scaled Sobolev
inequality on \(B(x_i,2h)\), obtained from the unit-ball estimate by
dilation, gives
\begin{equation*}
 \int_{V_i}|p(x)-p(x_i)|^2\dd x
 \lesssim\sum_{j=1}^{s_0}h^{2j}
 |p|_{H^j(B(x_i,2h))}^2.
\end{equation*}
Summing this display and using the overlap bound above yields
\begin{equation}\label{eq:sm-MZ-anchored}
 \bignorm{p-\mathcal E_Xp}_2^2
 \lesssim
 \sum_{j=1}^{s_0}h^{2j}|p|_{H^j}^2.
\end{equation}
Periodic Bernstein inequalities and \(Kh\leq1\) therefore imply
\begin{equation*}
 \bignorm{p-\mathcal E_Xp}_2
 \lesssim Kh\norm p_2.
\end{equation*}
Choose \(0<c_{\mathrm{MZ}}\leq1\) so that the implicit constant times
\(c_{\mathrm{MZ}}\) is at most
\(1/2\).  If \(Kh\leq c_{\mathrm{MZ}}\), the triangle and reverse triangle
inequalities then give
\begin{equation*}
 \frac14\norm p_2^2
 \leq \norm{\mathcal E_Xp}_2^2
 =\sum_iw_i|p(x_i)|^2
 \leq\frac94\norm p_2^2.
\end{equation*}
Finally, \eqref{eq:sm-voronoi-weights} implies
\begin{equation*}
 \frac1N\sum_i|p(x_i)|^2\asymp\norm p_2^2,
\end{equation*}
which is precisely the equal-weight estimate \eqref{eq:mz}.
\end{proof}

Henceforth, choose \(h_0>0\) small enough that
\eqref{eq:mz} and \eqref{eq:trace} hold for each Sobolev index used below;
it may be decreased without further comment.

\begin{proof}[Proof of \Cref{lem:reverse-gram-main}]
\emph{Reverse sampling.}
If \(\rho\leq h\), then \(\delta_{h,\rho}\geq2^{-\tau}\), and the
claim follows from \(\norm u_2\leq\norm u_{\tau,\rho}\).  Suppose
\(\rho>h\).  After decreasing \(h_0\), set
\(K=\lfloor c_{\mathrm{MZ}}/(2h)\rfloor\), so that
\(K\geq c_{\mathrm{MZ}}/(4h)\) and \(Kh\leq c_{\mathrm{MZ}}\).
Let \(P_K\) be the Fourier-coordinate projection onto \(V_K\), and write
\(u=p+w\), with \(p=P_Ku\).  The lower MZ bound gives
\begin{equation}\label{eq:sm-low-control}
 \norm p_2^2\lesssim\norm{u|_{X_N}}_N^2
 +\norm{w|_{X_N}}_N^2.
\end{equation}
Every frequency in \(w\) has modulus \(\gtrsim h^{-1}\).  Hence
\begin{equation*}
 \norm w_2^2+h^{2\tau}\norm w_{H^\tau}^2
 \lesssim(h/\rho)^{2\tau}\norm u_{\tau,\rho}^2.
\end{equation*}
Apply \eqref{eq:trace} with \(s=\tau\) to \(w\), substitute in
\eqref{eq:sm-low-control}, and use \(p\perp w\).  This proves
\eqref{eq:reverse-sampling-main}, since
\(\delta_{h,\rho}\asymp(h/\rho)^{2\tau}\) in the present case.

\emph{Gram stability.}
After decreasing \(h_0\), the separation radius is below a fixed fraction
of the injectivity radius.  We first prove the intrinsic estimate
\begin{equation}\label{eq:sm-q-gram}
 \lambda_{\min}\!\left(\frac{\bar{\boldsymbol K}_{\rho,X}}N\right)
 \gtrsim[1+(\rho/q)^2]^{-\tau}.
\end{equation}
Choose a nonzero \(\chi\in C_c^\infty(B(0,1/8))\), put
\(\widetilde\chi(x)=\overline{\chi(-x)}\), and let
\(\phi=\chi*\widetilde\chi\).
Then \(\phi(0)>0\), \(\operatorname{supp}\phi\subset B(0,1/4)\), and
\begin{equation*}
 0\leq\phi^{\ft}(\xi)\lesssim
 (1+(2\pi|\xi|)^2)^{-\tau}.
\end{equation*}
Set \(s=q\wedge\rho\) and define the periodic bump
\begin{equation*}
 \Psi_s(x):=\sum_{n\in\Z^d}\phi\bigl((x+n)/s\bigr).
\end{equation*}
Poisson summation gives
\begin{equation}\label{eq:sm-periodized-bump}
 \Psi_s(x)=s^d\sum_{k\in\Z^d}
 \phi^{\ft}(sk)e^{2\pi i k\cdot x}.
\end{equation}
For \(\alpha\in\mathbb C^N\), write
\(\mathcal A_k=\sum_j\alpha_je^{-2\pi i k\cdot x_j}\).  Since
\(\operatorname{supp}\Psi_s\subset B(0,s/4)\), while distinct sites have
periodic distance at least \(2q\geq2s\), we have
\(\Psi_s(x_i-x_j)=0\) for \(i\ne j\) and \(\Psi_s(0)=\phi(0)\).
Applying \eqref{eq:sm-periodized-bump} to the
resulting quadratic form gives
\begin{equation}\label{eq:sm-ingham-identity}
 \sum_{k\in\Z^d}\phi^{\ft}(sk)|\mathcal A_k|^2
 =s^{-d}\phi(0)\norm\alpha_{\ell^2}^2.
\end{equation}
Since \(s\leq\rho\),
\begin{equation*}
 \bigl(1+(2\pi\rho|k|)^2\bigr)^{-\tau}
 \geq (s/\rho)^{2\tau}
 \bigl(1+(2\pi s|k|)^2\bigr)^{-\tau}
 \gtrsim (s/\rho)^{2\tau}\phi^{\ft}(sk).
\end{equation*}
Consequently, \eqref{eq:sm-ingham-identity} gives
\begin{equation}\label{eq:sm-gram-unnormalized}
 \alpha^*\bar{\boldsymbol K}_{\rho,X}\alpha
 =\sum_k\bar\mu_{\rho,k}|\mathcal A_k|^2
 \gtrsim\rho^{-2\tau}s^{2\tau-d}
 \norm\alpha_{\ell^2}^2.
\end{equation}
The disjoint \(q\)-balls give \(Nq^d\lesssim1\), and hence
\(N^{-1}\gtrsim q^d\).  If \(q\leq\rho\), then \(s=q\) and division
of \eqref{eq:sm-gram-unnormalized} by \(N\) gives
\begin{equation*}
 \lambda_{\min}(\bar{\boldsymbol K}_{\rho,X}/N)
 \gtrsim(q/\rho)^{2\tau}.
\end{equation*}
If \(q>\rho\), then \(s=\rho\), and the same calculation gives
\(\lambda_{\min}(\bar{\boldsymbol K}_{\rho,X}/N)\gtrsim1\).  These two
cases prove \eqref{eq:sm-q-gram}.

Finally, \(q\geq h/\bar\gamma\) implies
\begin{equation}\label{eq:sm-gram-mesh-conversion}
 [1+(\rho/q)^2]^{-\tau}
 \geq\bar\gamma^{-2\tau}[1+(\rho/h)^2]^{-\tau}.
\end{equation}
Since \(G_X=\bar{\boldsymbol K}_{\rho,X}/N\), the fixed factor
\(\bar\gamma^{-2\tau}\) is absorbed into the implicit constant, proving
\eqref{eq:gram-main}.
\end{proof}

\begin{proof}[Proof of \Cref{cor:recovery-stability-main}]
The reverse sampling inequality is the quadratic-form estimate, in the
Loewner order,
\begin{equation}\label{eq:sm-operator-sampling}
 J^*J\lesssim\delta_{h,\rho}I+C_X.
\end{equation}
By \eqref{eq:gram-main}, there is \(c_{\mathrm G}>0\) such that every
eigenvalue \(t\) of \(C_X\) on \(\overline{\operatorname{ran}S^*}\)
satisfies \(t\geq c_{\mathrm G}\delta_{h,\rho}\).
Spectral calculus on \(\overline{\operatorname{ran}S^*}\) gives
\begin{equation*}
 \norm{Q_\eta z}_2^2
 \lesssim\sup_{t\geq c_{\mathrm G}\delta_{h,\rho}}
 \frac{t(\delta_{h,\rho}+t)}{(t+\eta)^2}\norm z_N^2
 \lesssim\norm z_N^2.
\end{equation*}
This proves \eqref{eq:data-stability-main}; the Moore--Penrose calculation is
identical at \(\eta=0\).

Since \(p=\bar T_{\rho,\tau}v=JJ^*v\), for \(\eta>0\) the resolvent
identity gives
\begin{equation*}
 p-Q_\eta(p|_{X_N})
 =J\eta(C_X+\eta I)^{-1}J^*v.
\end{equation*}
Put \(D_\eta:=\eta(C_X+\eta I)^{-1}\).  Factoring this positive
resolvent on both sides of \eqref{eq:sm-operator-sampling} gives, again in
the Loewner order,
\begin{equation*}
\begin{aligned}
 D_\eta J^*JD_\eta
 &\lesssim D_\eta(\delta_{h,\rho}I+C_X)D_\eta\\
 &\lesssim\max\{\delta_{h,\rho},\eta\}D_\eta.
\end{aligned}
\end{equation*}
Indeed, the second inequality follows from
\(
 \eta(\delta_{h,\rho}+t)/(\eta+t)
 \leq\max\{\delta_{h,\rho},\eta\}
\) for \(t\geq0\).  Consequently,
\begin{equation*}
\begin{aligned}
 \norm{JD_\eta J^*v}_2^2
 &\lesssim\max\{\delta_{h,\rho},\eta\}
   \left\langle v,JD_\eta J^*v\right\rangle_{L^2}\\
 &\lesssim\max\{\delta_{h,\rho},\eta\}
   \norm v_2\norm{JD_\eta J^*v}_2.
\end{aligned}
\end{equation*}
This proves \eqref{eq:source-resolvent-main}.  At \(\eta=0\), the
Moore--Penrose identity gives
\[
 p-Q_0(p|_{X_N})
 =J(I-C_X^+C_X)J^*v
 =J\Pi_{\ker S}J^*v.
\]
Here \(\Pi_{\ker S}\) is the RKHS-orthogonal projection onto \(\ker S\),
and \(I-C_X^+C_X=\Pi_{\ker C_X}=\Pi_{\ker S}\).  Restricting
\eqref{eq:sm-operator-sampling} to \(\ker S\) gives the same bound with
\(\delta_{h,\rho}\).
When \(h\leq\rho\),
\(\delta_{h,\rho}\lesssim(h/\rho)^{2\tau}\), and
\(\eta\leq(r/\rho)^{2\tau}\).
Substitution into \eqref{eq:source-resolvent-main} proves the final assertion
of \cref{cor:recovery-stability-main}.
\end{proof}

\subsection{Variance upper bound}
\label{sec:sm-variance-upper}

\begin{proof}[Proof of \Cref{lem:variance-upper-main}]
Let \(t_1,\ldots,t_N\) be the positive eigenvalues of \(C_X\).  On
\(\operatorname{ran}C_X\), \eqref{eq:reverse-sampling-main} and
\eqref{eq:gram-main} imply \(J^*J\lesssim C_X\) in the Loewner order.
Indeed, the first estimate gives \(J^*J\lesssim\delta_{h,\rho}I+C_X\),
while the second gives \(\delta_{h,\rho}I\lesssim C_X\) on this range.
Since the covariance of
\(\xi\) in \(E_N\) is \((\sigma^2/N)I\), it follows that
\begin{equation}\label{eq:sm-variance-effective-dimension}
 \E\norm{Q_\eta\xi}_2^2
 =\frac{\sigma^2}{N}\norm{Q_\eta}_{\mathrm{HS}}^2
 \lesssim\frac{\sigma^2}{N}
 \sum_{j=1}^N\left(\frac{t_j}{t_j+\eta}\right)^2,
\end{equation}
where the quotient equals one when \(\eta=0\).

It remains to bound the number of directions effectively transmitted by
ridge.  Assume first that \(\eta>0\), put
\(r_\eta:=\rho\eta^{1/(2\tau)}\), and take
\(K=\lceil r_\eta^{-1}\rceil\).  The hypothesis \(r\leq1\) gives
\(r_\eta\leq1\), and hence \(K\asymp r_\eta^{-1}\).  Let \(P_K\) be the
Fourier-coordinate projection onto \(V_K\), which is orthogonal in both
\(L^2\) and the normalized Mat\'ern RKHS, and put
\[
 A_\eta:=\bigl[C_X(C_X+\eta I)^{-1}\bigr]^2.
\]
Its trace is \(\sum_j(t_j/(t_j+\eta))^2\).  Moreover,
\((t/(t+\eta))^2\leq\min\{1,t/(4\eta)\}\) implies
\(0\preceq A_\eta\preceq I\) and
\(A_\eta\preceq C_X/(4\eta)\).  Thus, with
\(P_K^\perp=I-P_K\),
\begin{equation*}
\begin{aligned}
 \tr A_\eta
 &=\tr(P_KA_\eta P_K)+\tr(P_K^\perp A_\eta P_K^\perp)\\
 &\leq\operatorname{rank}(P_K)
   +\frac1{4\eta}\tr(P_K^\perp C_XP_K^\perp).
\end{aligned}
\end{equation*}
To evaluate the discarded trace, set
\(\psi_k:=\bar\mu_{\rho,k}^{1/2}e_k\).  The \(\psi_k\) form an
orthonormal basis of the normalized Mat\'ern RKHS, and
\(\norm{S\psi_k}_N^2=\bar\mu_{\rho,k}\).  Therefore
\begin{equation*}
 \tr(P_K^\perp C_XP_K^\perp)
 =\sum_{\norm k_\infty>K}\bar\mu_{\rho,k}
 \lesssim\rho^{-2\tau}K^{d-2\tau},
\end{equation*}
where the last bound uses \(2\tau>d\).  Since
\(\operatorname{rank}(P_K)\lesssim K^d\), the preceding displays and the
choice of \(K\) give
\[
 \sum_{j=1}^N\left(\frac{t_j}{t_j+\eta}\right)^2
 \lesssim r_\eta^{-d}.
\]
The sum is also at most \(N\lesssim h^{-d}\), so
\(r=\max\{h,r_\eta\}\) gives the sharper bound \(\lesssim r^{-d}\).
When \(\eta=0\), the same conclusion follows directly from
\(N\lesssim h^{-d}=r^{-d}\).  Substitution into
\eqref{eq:sm-variance-effective-dimension} proves
\eqref{eq:variance-upper-main}.

The oracle proof uses this lemma only on the branch \(r\leq1\).  Larger
ridge scales give a squared-bias term of order \(A^2\) through
\eqref{eq:ridge-all-scale-main} and are covered by the \(A^2\) cap.
\end{proof}

\section{Finite-exclusion Fourier frame}\label{sec:sm-frame}

This section supplies the annular-packing, Poisson-summation, and
Gershgorin calculation behind the frame estimate in the main paper.

\begin{proof}[Proof of \Cref{lem:frame}]
Put \(m:=\max\{1,|F|\}\) and, for \(\alpha\in\mathbb C^N\), write
\begin{equation*}
 \mathcal A_k:=\sum_{j=1}^N\alpha_je^{-2\pi i k\cdot x_j}.
\end{equation*}
Choose a fixed even cutoff \(w\in C_c^\infty(\mathbb R^d)\) such that
\(0\leq w\leq1\), \(w\) is positive near the origin, and
\(\operatorname{supp}w\subset B(0,R_w)\).  Form
\begin{equation*}
 \Theta_M(x)=\sum_{k\in\Z^d}w(k/M)e^{2\pi i k\cdot x}.
\end{equation*}
Poisson summation and rapid decay of \(w^{\ft}\) give, for every
\(p>d\),
\begin{equation}\label{eq:sm-frame-localization}
 |\Theta_M(x)|\leq C_pM^d
 (1+M\dist(x,0))^{-p},
 \qquad x\in\T^d.
\end{equation}
Moreover, lattice-point counting on a ball where \(w\) is bounded below
shows that, for \(M\geq M_w\),
\begin{equation}\label{eq:sm-frame-diagonal}
 \Theta_M(0)=\sum_kw(k/M)\geq c_wM^d.
\end{equation}

For fixed \(i\), put
\begin{equation*}
 \mathcal R_s(i):=\{j\ne i:2^sq\leq\dist(x_i,x_j)<2^{s+1}q\},
 \qquad s\geq0.
\end{equation*}
The disjoint \(q\)-balls centered at the sites give
\begin{equation}\label{eq:sm-frame-annular-count}
 |\mathcal R_s(i)|\leq C_d2^{sd}.
\end{equation}
Enlarge \(C_d\), if necessary, so that also \(Nq^d\leq C_d\).
Take \(M=L_0/q\), where \(L_0\geq1\) is fixed below.  Combining
\eqref{eq:sm-frame-localization} and
\eqref{eq:sm-frame-annular-count} bounds the off-diagonal row sum by
\(C_{d,p}M^d\sum_{s\geq0}2^{sd}(1+L_02^s)^{-p}\).  Since \(p>d\),
\begin{equation}\label{eq:sm-frame-row-sum}
 \sum_{j\ne i}|\Theta_M(x_i-x_j)|
 \leq C'_{d,p}L_0^{-p}M^d.
\end{equation}
Choose \(L_0\) large enough that this row sum is at most \((c_w/2)M^d\) and
\begin{equation}\label{eq:sm-frame-deletion-choice}
 mC_dq^{-d}\leq(c_w/4)M^d
 =(c_w/4)L_0^dq^{-d},
\end{equation}
where the same \(C_d\) controls both the annular and global packing bounds.
Taking \(h_F\) small enough ensures \(M\geq M_w\).

The matrix \([\Theta_M(x_i-x_j)]_{i,j}\) is Hermitian.  By
\eqref{eq:sm-frame-diagonal} and \eqref{eq:sm-frame-row-sum},
Gershgorin's theorem bounds its smallest eigenvalue below by
\((c_w/2)M^d\).  Its quadratic form satisfies
\begin{equation*}
 \sum_{i,j}\overline\alpha_i\alpha_j\Theta_M(x_i-x_j)
 =\sum_{k\in\Z^d}w(k/M)|\mathcal A_k|^2
 \leq\sum_{|k|\leq R_wM}|\mathcal A_k|^2.
\end{equation*}
It follows that
\begin{equation}\label{eq:sm-frame-window}
 \sum_{|k|\leq R_wM}|\mathcal A_k|^2
 \geq(c_w/2)M^d\norm\alpha_{\ell^2}^2.
\end{equation}
For every \(k\), Cauchy--Schwarz and packing give
\begin{equation*}
 |\mathcal A_k|^2\leq N\norm\alpha_{\ell^2}^2,
 \qquad Nq^d\leq C_d.
\end{equation*}
Thus \eqref{eq:sm-frame-deletion-choice} allows the frequencies in
\(F\cap\{|k|\leq R_wM\}\) to be subtracted from
\eqref{eq:sm-frame-window}, proving
\eqref{eq:frame} with \(L_F=R_wL_0\).
\end{proof}

\section{Fixed finite Fourier mean spaces}\label{sec:sm-augmentation}

\Cref{prop:target-scale-mode} rules out a missing-intercept explanation using
a target-scale mode.  The following strengthening shows that adjoining any
fixed finite Fourier mean space still leaves the kernel-resolution barrier.
We use complex spans for bookkeeping; a fixed real trigonometric space is
contained in its symmetric complexification.  By the representer theorem, the
corresponding Mat\'ern interpolants and KRR fits lie in the augmented span
below, so its approximation lower bound applies to them.

\begin{proposition}[Fixed finite mean spaces preserve the kernel-resolution barrier]
\label{prop:sm-augmentation}
Fix a finite \(\Lambda\subset\Z^d\), independently of
\(N,h,\ell,\rho\), and \(X_N\), and put
\begin{equation*}
 \mathcal P_\Lambda:=\operatorname{span}_{\mathbb C}\{e_k:k\in\Lambda\},
 \qquad
 \cV_{\rho,X_N}:=\operatorname{span}_{\mathbb C}
 \{\bar K_{\rho,\tau}(\,\cdot-x_j):1\leq j\leq N\}.
\end{equation*}
There exist a frequency \(\nu_\Lambda\notin\Lambda\) and constants
\(c_\Lambda,h_\Lambda>0\), depending only on
\(d,\beta,\tau,\bar\gamma,\Lambda\), such that, for every
\(X_N\) with \(h\leq h_\Lambda\) and \(\gamma\leq\bar\gamma\), and every
\(0<\rho\leq1\),
\begin{equation}\label{eq:sm-augmented-span}
 \dist_{L^2}^2\left(e_{\nu_\Lambda},
 \mathcal P_\Lambda+\cV_{\rho,X_N}\right)
 \geq c_\Lambda[1+(\rho/h)^2]^{-2\tau}.
\end{equation}
Moreover, for every such \(X_N,\rho\), and \(0<\ell\leq1\), there is a real-valued
\(f_\Lambda\in\cF_\beta(A,\ell)\) such that
\begin{equation}\label{eq:sm-augmented-class}
 \inf_{g\in\mathcal P_\Lambda+\cV_{\rho,X_N}}
 \bignorm{f_\Lambda-g}_2^2
 \geq c_\Lambda A^2[1+(\rho/h)^2]^{-2\tau}.
\end{equation}
If \(h\leq\rho\), the right-hand side is bounded below by
\(c_\Lambda A^2(h/\rho)^{4\tau}\).
\end{proposition}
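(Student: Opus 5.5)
The plan is to repeat the proof of \cref{lem:low-mode-leakage-main}, with the finite exclusion set in \cref{lem:frame} enlarged from the single target frequency to the mean frequencies together with the target frequency. The mean coefficients then cost nothing, because the polynomial part can absorb them exactly. All the leakage has to appear at frequencies outside this exclusion set.

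\emph{Setup.} First replace \(\Lambda\) by its symmetrization \(\Lambda_s:=\Lambda\cup(-\Lambda)\). Since \(\mathcal P_\Lambda\subset\mathcal P_{\Lambda_s}\), a lower bound on the distance to the larger space implies \eqref{eq:sm-augmented-span}.

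Choose \(\nu_\Lambda:=n\mathbf e^{(1)}\), where \(n\geq1\) is the smallest integer with \(\pm n\mathbf e^{(1)}\notin\Lambda_s\). Then \(|\nu_\Lambda|\leq C_\Lambda\), and \(\pm\nu_\Lambda\notin\Lambda_s\).

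Put \(F:=\Lambda_s\cup\{\nu_\Lambda\}\). Apply \cref{lem:frame} with this \(F\), and take \(h_\Lambda:=h_F\) (reduced further if needed). The constants \(h_F,L_F,c\) depend only on \(d,\bar\gamma,|F|\), hence only on \(\Lambda\).

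\emph{Leakage estimate.} Write a general element of \(\mathcal P_{\Lambda_s}+\cV_{\rho,X_N}\) as \(p+g_\alpha\), with \(g_\alpha=\sum_j\alpha_j\bar K_{\rho,\tau}(\cdot-x_j)\). Set \(\mathcal A_k=\sum_j\alpha_je^{-2\pi ik\cdot x_j}\) as before. Parseval splits \(\norm{e_{\nu_\Lambda}-p-g_\alpha}_2^2\) into three groups:
\begin{itemize}
\item the coefficients at \(k\in\Lambda_s\), which are nonnegative and are dropped;
\item the coefficient at \(\nu_\Lambda\), namely \(|1-\bar\mu_{\rho,\nu_\Lambda}\mathcal A_{\nu_\Lambda}|^2\), using that \(p\) has no component there;
\item the sum \(\sum_{k\notin F}\bar\mu_{\rho,k}^2|\mathcal A_k|^2\).
\end{itemize}
On the window \(|k|\leq L_F/q\) we have \(\bar\mu_{\rho,k}^2\gtrsim[1+(\rho/h)^2]^{-2\tau}\), using \(q\geq h/\bar\gamma\). \Cref{lem:frame}, together with \(|\mathcal A_{\nu_\Lambda}|^2\leq N\norm\alpha_{\ell^2}^2\) and \(N\asymp q^{-d}\), bounds the third group below by \(b|\mathcal A_{\nu_\Lambda}|^2\), where \(b:=c[1+(\rho/h)^2]^{-2\tau}\leq1\).

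Minimizing the resulting quadratic \(|1-\bar\mu z|^2+b|z|^2\) over \(z\in\mathbb C\) gives at least \(b/(\bar\mu^2+b)\geq b/2\), since \(0<\bar\mu_{\rho,\nu_\Lambda}\leq1\). This proves \eqref{eq:sm-augmented-span}.

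\emph{Real target in the class.} The space \(\mathcal W:=\mathcal P_{\Lambda_s}+\cV_{\rho,X_N}\) is closed under complex conjugation, because the kernel is real and even and \(\Lambda_s\) is symmetric. So the \(L^2\)-orthogonal projection \(\Pi\) onto \(\mathcal W\) commutes with conjugation and maps real functions to real functions.

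Hence, writing \(e_\nu=(\psi^{\cos}+i\psi^{\sin})/\sqrt2\) with \(\nu=\nu_\Lambda\), we get
\[
 2\norm{e_\nu-\Pi e_\nu}_2^2=\norm{\psi^{\cos}-\Pi\psi^{\cos}}_2^2+\norm{\psi^{\sin}-\Pi\psi^{\sin}}_2^2.
\]
Therefore one real mode \(\psi_{\nu}\) has squared distance to \(\mathcal W\) at least \(b/2\).

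Set \(f_\Lambda:=A[1+(2\pi\ell|\nu_\Lambda|)^2]^{-\beta/2}\psi_{\nu_\Lambda}\), which lies in \(\cF_\beta(A,\ell)\). Since \(\ell\leq1\) and \(|\nu_\Lambda|\leq C_\Lambda\), the prefactor is bounded below by a constant depending on \(\Lambda,\beta\). This gives \eqref{eq:sm-augmented-class}.

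Finally, when \(h\leq\rho\) we have \([1+(\rho/h)^2]^{-2\tau}\geq2^{-2\tau}(h/\rho)^{4\tau}\), which gives the last assertion.

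\emph{Main obstacle.} I expect the main difficulty to be the real-mode reduction. In the proof of \cref{prop:target-scale-mode} it relied on the interpolation operator being real; here it instead requires the augmented span to be conjugation-invariant. That is exactly why I symmetrize \(\Lambda\) and exclude both \(\pm\nu_\Lambda\) from it.
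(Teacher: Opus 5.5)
Your proof is correct and follows the paper's argument: you apply the frame lemma with an exclusion set made of the mean frequencies together with $\nu$, use Parseval, minimize the quadratic to get $b/2$, and scale a real mode into the class. The only deviation is in the real-mode step. You symmetrize to $\Lambda\cup(-\Lambda)$ so that the span is conjugation-invariant and an exact Pythagorean identity holds. The paper does not symmetrize; it uses only complex linearity of $W$ and the triangle inequality, $\dist(e_\nu,W)\le\sqrt2\max\{\dist(c_\nu,W),\dist(s_\nu,W)\}$. So the obstacle you flag does not actually arise.
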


\begin{proof}
Choose \(\nu=\nu_\Lambda\notin\Lambda\) with minimal modulus, breaking ties
arbitrarily.  Then \(|\nu|\) is bounded in terms of \(d\) and
\(|\Lambda|\).
For \(p\in\mathcal P_\Lambda\) and
\(g_\alpha=\sum_j\alpha_j\bar K_{\rho,\tau}(\cdot-x_j)\), put
\(F=\Lambda\cup\{\nu\}\) and
\(\mathcal A_k=\sum_j\alpha_je^{-2\pi i k\cdot x_j}\).
After decreasing \(h_\Lambda\) if needed, apply \cref{lem:frame} with this
set \(F\).  On its frequency window,
\(\bar\mu_{\rho,k}^2\gtrsim[1+(\rho/q)^2]^{-2\tau}\).  Hence, after
enlarging the resulting nonnegative sum,
\begin{equation*}
 \sum_{k\notin F}\bar\mu_{\rho,k}^2|\mathcal A_k|^2
 \geq c_\Lambda[1+(\rho/q)^2]^{-2\tau}
 q^{-d}\norm\alpha_{\ell^2}^2
 \geq D_\Lambda|\mathcal A_\nu|^2,
\end{equation*}
where the second inequality uses \(q\geq h/\bar\gamma\),
\(|\mathcal A_\nu|^2\leq N\norm\alpha_{\ell^2}^2\), and
\(Nq^d\leq C_d\), with
\(D_\Lambda:=c_\Lambda[1+(\rho/h)^2]^{-2\tau}\leq1\), after decreasing
\(c_\Lambda\) if necessary.
Because \(p\) has Fourier support in \(\Lambda\), Parseval bounds
\(\bignorm{e_\nu-p-g_\alpha}_2^2\) below by
\[
 |1-\bar\mu_{\rho,\nu}\mathcal A_\nu|^2
 +D_\Lambda|\mathcal A_\nu|^2.
\]
Since \(0<\bar\mu_{\rho,\nu}\leq1\) and \(D_\Lambda\leq1\),
\begin{equation*}
 \inf_{a\in\mathbb C}
 \left\{|1-\bar\mu_{\rho,\nu}a|^2+D_\Lambda|a|^2\right\}
 =\frac{D_\Lambda}{\bar\mu_{\rho,\nu}^2+D_\Lambda}
 \geq\frac{D_\Lambda}{2}.
\end{equation*}
The preceding expression is therefore at least \(D_\Lambda/2\), proving
\eqref{eq:sm-augmented-span}.

Let \(W=\mathcal P_\Lambda+\cV_{\rho,X_N}\).  If \(\nu=0\), set
\(\psi_\nu=e_0\).  Otherwise, let
\(c_\nu=(e_\nu+e_{-\nu})/\sqrt2\) and
\(s_\nu=(e_\nu-e_{-\nu})/(\sqrt2\,i)\).
Because \(e_\nu=(c_\nu+i s_\nu)/\sqrt2\) and \(W\) is complex linear,
\eqref{eq:sm-augmented-span} implies that at least one
\(\psi_\nu\in\{c_\nu,s_\nu\}\) satisfies
\begin{equation*}
 \dist_{L^2}^2(\psi_\nu,W)
 \geq\tfrac12\dist_{L^2}^2(e_\nu,W).
\end{equation*}
Set
\(f_\Lambda:=A[1+(2\pi\ell|\nu|)^2]^{-\beta/2}\psi_\nu\).
Then \(f_\Lambda\in\cF_\beta(A,\ell)\), and the scalar prefactor is bounded
below by \(c_\Lambda A\) because \(\ell\leq1\) and \(\nu\) is fixed.
This proves \eqref{eq:sm-augmented-class}.
\end{proof}

The fixedness assumption is essential: a mean space whose dimension is of
order \(h^{-d}\) can contain every geometrically resolved Fourier mode.

\section{Two-dimensional information exponents}
\label{sec:sm-dimension-numerics}

Using the finite-section construction of \cref{sec:numerics}, we perform a
focused check of the dimension dependence in the two statistical terms.  For \(d=2\)
and \(\beta=\tau=2\), the target- and kernel-information powers are
\(2\beta/(2\beta+d)=2/3\) and \(4\tau/(4\tau+d)=4/5\), rather than
\(4/5\) and \(8/9\) as in \(d=1\).  We use an \(8\times8\) periodic
lattice, fix \(\ell/h=8\), normalize \(A=1\), set
\(\sigma^2=N\ell^2/\Iell\), and vary \(\Iell=2^j\), \(4\leq j\leq16\).
The target and kernel Fourier sections are symmetric boxes with
coordinate cutoffs
\begin{equation*}
 K_T:=\max\{8,\lceil5/\ell\rceil\}=8,
 \qquad
 K:=\max\{K_T,64,\lceil8/\rho\rceil\}=64.
\end{equation*}
Conjugation symmetry gives the same largest bias eigenvalue as the
corresponding real sine--cosine section.  The ridge grid, zero-estimator
endpoint, and reliability criterion are those of \cref{sec:numerics}.
The choice \(\rho/\ell=5/4\) isolates the target-information branch, while
\(\rho/\ell=1/4\) exposes the kernel-information branch before the geometric
term becomes active.  The corresponding curves in
\cref{fig:sm-dimension-numerics} follow the prescribed slopes \(-2/3\) and
\(-4/5\) over their information-dominated ranges.  Increasing \(K\) from
\(64\) to \(96\) changes the grid-refined oracle risks by at most
\(0.004\%\); independently replacing \(5/\ell\) by \(7.5/\ell\) in
\(K_T\) changes them by at most \(0.18\%\).  This calculation checks the
dimension dependence of the information powers; it is not an additional
design-robustness experiment.

\begin{figure}[!htbp]
\centering
\includegraphics[width=.80\textwidth]{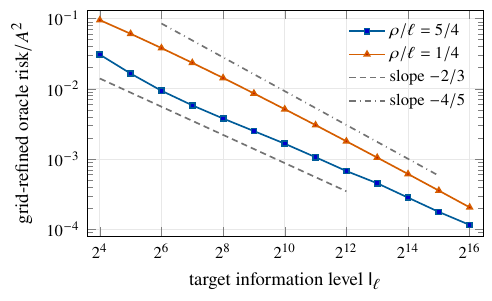}
\caption{Focused \(d=2\) information-level experiment on an \(8\times8\)
periodic lattice with \(\beta=\tau=2\) and \(\ell/h=8\).  The curves for
\(\rho/\ell=5/4\) and \(\rho/\ell=1/4\) exhibit the target- and
kernel-information slopes, respectively.  The dashed and dash-dotted lines
are prescribed-slope guides, vertically offset for legibility.}
\label{fig:sm-dimension-numerics}
\end{figure}

\end{document}